\renewcommand*{\backrefalt}[4]{   
	\ifcase #1 (Not cited.)        
	\or        (Cited on page~#2.) 
	\else      (Cited on pages~#2.)
	\fi}
\newcommand{\crosses}[1]{%
	\ifcase#1\relax
	\or
	\rslash\or
	\rslash\mskip-5.5mu\rslash\or
	\rslash\mskip-5.5mu\rslash\mskip-5.5mu\rslash%
	\fi
}
\newcommand{\rslash}{\raisebox{.15ex}{/}}
\numberwithin{equation}{section}
\theoremstyle{plain}
\newtheorem{lemma}{Lemma}[section]
\newtheorem{proposition}[lemma]{Proposition}
\newtheorem{proposition/definition}[lemma]{Proposition/Definition}
\newtheorem*{theorem*}{Theorem}
\newtheorem{theorem}[lemma]{Theorem}
\newtheorem{corollary}[lemma]{Corollary}
\theoremstyle{definition}
\newtheorem{definition}[lemma]{Definition}
\newtheorem{remark}[lemma]{Remark}
\newtheorem{example}[lemma]{Example}
\DeclareRobustCommand{\SkipTocEntry}[5]{}
\DeclareMathOperator{\id}{id}
\DeclareMathOperator{\rank}{rank}
\DeclareMathOperator{\Hom}{Hom}
\DeclareMathOperator{\gr}{Gr}
\newcommand{\pr}{\text{pr}}
\newcommand{\hooklongrightarrow}{\lhook\joinrel\longrightarrow}
\newcommand{\calD}{\mathcal{D}}
\newcommand{\calE}{\mathcal{E}}
\newcommand{\calH}{\mathcal{H}}
\newcommand{\calL}{\mathcal{L}}
\newcommand{\calM}{\mathcal{M}}
\newcommand{\calQ}{\mathcal{Q}}
\newcommand{\calX}{\mathcal{X}}
\newcommand{\scrL}{\mathscr{L}}
\newcommand{\bbD}{\mathbb{D}}
\newcommand{\bbN}{\mathbb{N}}
\newcommand{\bbR}{\mathbb{R}}
\newcommand{\bbT}{\mathbb{T}}
\newcommand{\bbZ}{\mathbb{Z}}
\newcommand{\frakX}{\mathfrak{X}}
\newcommand{\fraka}{\mathfrak{a}}
\newcommand{\frakg}{\mathfrak{g}}
\newcommand{\frakh}{\mathfrak{h}}
\newcommand{\frakm}{\mathfrak{m}}
\newcommand{\rmd}{\mathrm{d}}
\newcommand{\rmh}{\mathrm{h}}
\renewcommand{\phi}{\varphi}
\newcommand{\ldsb}{[\![}
\newcommand{\rdsb}{]\!]}
\newcommand{\ldab}{\langle\!\langle}
\newcommand{\rdab}{\rangle\!\rangle}
\renewcommand{\theta}{\vartheta}
\DeclareRobustCommand{\SkipTocEntry}[5]{}
\title[]{The Deformation $L_\infty$ algebra of a Dirac--Jacobi structure}
\author{Alfonso Giuseppe Tortorella}
\address{Centro de Matemática da Universidade do Porto, Rua do Campo Alegre 867, 4197-007 Porto, Portugal
	\newline
	Geometry Section, Department of Mathematics, KU Leuven, Celestijnenlaan 200B - 3001 Leuven, Belgium
}
\email{\href{mailto:alfonsogiuseppe.tortorella@kuleuven.be}{\underline{\smash{alfonsogiuseppe.tortorella@kuleuven.be}}}\qquad\href{mailto:alfonso.tortorella@fc.up.pt}{\underline{\smash{alfonso.tortorella@fc.up.pt}}}}
\keywords{Dirac and Jacobi geometry; graded geometry; deformation theory; $L_\infty$ algebras; MC equation}
\subjclass[2020]{53D10, 
				 53D17, 
				 58H15, 
				 17B70, 
				 58A50, 
				 17B63 
			 }
\begin{document}

\begin{abstract}
	We develop the deformations theory of a Dirac--Jacobi structure within a fixed Courant--Jacobi algebroid.
	Using the description of split Courant--Jacobi algebroids as degree $2$ contact $\bbN Q$ manifolds and Voronov's higher derived brackets, each Dirac--Jacobi structure is associated with a cubic $L_\infty$ algebra for any choice of a complementary almost Dirac--Jacobi structure.
	This $L_\infty$ algebra governs the deformations of the Dirac--Jacobi structure: there is a one-to-one correspondence between the MC elements of this $L_\infty$ algebra and the small deformations of the Dirac-Jacobi structure.
	Further, by Cattaneo and Sch\"atz's equivalence of higher derived brackets, this $L_\infty$ algebra does not depend (up to $L_\infty$-isomorphisms) on the choice of the complementary almost Dirac--Jacobi structure.
	These same ideas apply to get a new proof of the independence of the $L_\infty$ algebra of Dirac structure from the choice of a complementary almost Dirac structure (a result proved using other techniques by Gualtieri, Matviichuk and Scott).
\end{abstract}

\maketitle

\tableofcontents

\section{Introduction}
\label{sec:intro}

Originated with Riemann’s 1857 memoir on abelian functions, the study of deformation problems plays a central role in algebra, geometry and mathematical physics.
In the late 50s, the deformation theory of complex structures was developed by Kodaira, Spencer and Kuranishi.
Deformation theory of algebraic structures was initiated, in the 60s, by Gerstenhaber (for associative algebras) and Richardson and Nijenhuis (for Lie algebras).
All this contributed to consolidate the philosophy (by Quillen, Drinfeld, Deligne,\ldots) according to which every deformation problem is controlled by a differential graded Lie algebra (dgLa for short) or an $L_\infty$ algebra, i.e.~a graded Lie algebra up to homotopy.
A dgLa (resp.~an $L_\infty$ algebra) is said to control the deformations of a structure if there is a one-to-one correspondence between the small deformations of the given structure and the Maurer–Cartan (MC) elements of the dgLa (resp.~$L_\infty$ algebra).
Moreover, dgLa's (resp.~$L_\infty$-algebras) which are equivalent as $L_\infty$-algebras govern equivalent deformation problems with equivalent moduli spaces.
However, in general, it is not trivial at all to construct the $L_\infty$ algebra governing the deformation problem of a given structure.

This paper aims at developing the deformation theory of Dirac--Jacobi structures.
These structures generalizes Dirac structures and have an intrinsic potential to highlight the rich interplay between several geometric structures like (pre-)contact, Jacobi, and almost contact structures, as well as generalized complex structures on odd-dimensional manifolds (often called \emph{generalized contact structures}~\cite{iglesias2005contact,poon2011generalized,vitagliano2016generalized}).
As detailed below, Dirac--Jacobi structures can be understood as the ``contact versions'' of the Dirac structures.
This interpretation is conceptually grounded on the close relation between symplectic/Poisson and contact/Jacobi geometry.
Indeed, on the one hand, contact structures can be seen, via \emph{symplectization}, as the odd-dimensional analogue of symplectic structures.
On the other hand, Jacobi structures (independently introduced by Kirillov~\cite{kirillov1976local} and Lichnerowicz~\cite{lichnerowicz1978varietes}) can be seen as the (possibly degenerate) contravariant generalization of contact structures exactly like Poisson structures are the (possibly degenerate) contravariant generalization of symplectic structures.
This point of view consists in adopting the line bundle approach of~\cite{vitagliano2018dirac} and seeing contact/Jacobi geometry as symplectic/Poisson geometry on the \emph{gauge (or Atiyah) algebroid} $DL$ of a line bundle $L\to M$.

The origin of the notion of Dirac structure goes back to Courant and Weinstein's work on the geometric interpretation of Dirac's theory of constrained mechanical systems and Dorfman's work on integrable systems.
In this setting, a Dirac structure is a Lagrangian subbundle of the generalized tangent bundle $\bbT M:=TM\oplus T^\ast M$ satisfying an integrability condition phrased in terms of the Courant bracket or equivalently (as we will do in the sequel) in terms its skew-symmetrization, i.e.~the Dorfman bracket.
Only later, in Liu, Weinstein and Xu's work on the double of a Lie bialgebroid, Courant algebroids emerged as the natural framework for the study of Dirac structures.
Loosely speaking, a Courant algebroid is a vector bundle $E\to M$ equipped with a non-degenerate symmetric product $\ldab-,-\rdab$, an anchor $\rho\colon E\to TM$ and a Loday bracket $\ldsb-,-\rdsb$ on $\Gamma(E)$ satisfying a series of compatibility conditions.
Then a Dirac structure is a subbundle $A\subset E$ which is Lagrangian wrt the product $\ldab-,-\rdab$ and involutive wrt the bracket $\ldsb-,-\rdsb$.
There is also an alternative, but equivalent, way to describe Courant algebroids that was introduced by Roytenberg.
Indeed, a Courant algebroid can be seen as a degree $2$ symplectic $\bbN Q$ manifold.
In this way, for instance, the standard Courant algebroid $\bbT M$ gets identified with the degree $2$ symplectic $\bbN$ manifold $T^\ast[2]TM[1]$ equipped with the cohomological degree $1$ Hamiltonian vector field associated to de Rham vector field $\rmd_{\text{dR}}$ on $TM[1]$ which can be seen, by ``Hamiltonian lift'', as an MC element of $(C^\infty(T^\ast[2]TM[1])[2],\{-,-\})$.

Dirac geometry has a built-in potential to unify several important geometric structures such as (pre)symplectic, Poisson, complex and generalized complex structures.
This motivated the interest received over the years by their deformation problem. 
In~\cite{liu1997manintriples}, given a Lie bialgebroid $(A,A^\ast)$, Liu, Weinstein and Xu observed that the shifted de Rham complex of the Lie algebroid $A$ enriched with the Gerstenhaber bracket of the Lie algebroid $A^\ast$ forms a dgLa $(\Omega^\bullet(A)[1],\rmd_A,[-,-]_{A^\ast})$ which controls the small deformations of the Dirac structure $A$ within the Courant algebroid $A\oplus A^\ast$, i.e.~the double of $(A,A^\ast)$.
However, unlike in the double of a Lie bialgebroid, a Dirac structure $A$ doesn't admit, in general, a complementary Dirac structure: the most that one can get is a complementary almost Dirac structure.
In~\cite{fregier_zambon_2015,Keller2007formaldeformationsDirac,roytenberg2002quasi}, given a Courant algebroid $E$ and a Dirac structure $A\subset E$, it is observed that each choice of an almost Dirac structure $B\subset E$ complementary to $A$ gives rise to a cubic $L_\infty$ algebra $(\Omega^\bullet(A)[1],\rmd_A,[-,-]_B,[-,-,-]_B)$ controlling the deformation problem of $A$.
Later on, the uniquess (up to $L_\infty$-isomorphisms) of the above $L_\infty$ algebra was proved in~\cite{gualtieri2020deformation}.
Precisely, given a Dirac structure $A\subset E$, they proved that different choices of the complementary almost Dirac structure $B\subset E$ give rise to $L_\infty$ algebras that are canonically $L_\infty$-isomorphic.
Further, the deformation theory of Dirac structures has also found direct applications to the study of deformation problems of related geometric structures like generalized complex structures~\cite{gualtieri2011generalized}, generalized K\"ahler structures~\cite{goto2010deformations}, pre-symplectic structures\cite{SZDirac}, and symplectic foliations~\cite{GTZ2021symplecticfoliations}. 


The natural setting for studying Dirac--Jacobi structures is provided by Courant--Jacobi algebroids, or equivalently contact-Courant algebroids.
Following Grabowski~\cite{Grabowski2013gradedcontactCourant}, a Courant--Jacobi algebroid can be described as a degree $2$ contact $\bbN$ manifold $\calM$, with underlying line bundle $\calL\to\calM$, equipped with a cohomological degree $1$ contact vector field $X_\Theta$ or equivalently an MC element $\Theta$ of $(\Gamma(\calL)[2],\{-,-\})$,  where $\{-,-\}$ is the corresponding degree $-2$ non-degenerate Jacobi structure on $\calL\to\calM$.
In terms of more classical data, a Courant--Jacobi algebroid $(E;L)$ consists of a vector bundle $E\to M$ and a line bundle $L\to M$ equipped with a non-degenerate $L$-valued symmetric product $\ldab-,-\rdab$, a vector bundle morphism $\nabla\colon E\to DL$ and a Loday bracket $\ldsb-,-\rdsb$ on $\Gamma(E)$ satisfying a series of compatibility conditions.
In this way, the latter generalizes the notion of Courant algebroid.
In particular, the omni-Lie algebroid $\bbD L:=DL\oplus J^1L$ of a line bundle~\cite{chen2010courant}, the prototypical example of Courant--Jacobi algebroid, identifies with the degree $1$ contact $\bbN$ manifold $J^1[2]L_{DL}$, with underlying line bundle $\calL:=J^1[2]L_{DL}\times_M L_{DL}\to J^1[2]L_{DL}$, equipped with the MC element obtained as ``Hamiltonian lift'' of $\rmd_{D}$, the der-differential~\cite{rubtsov1980cohomology} seen as cohomological degree $1$ derivation of the pull-back line bundle $L_{DL}:=DL[1]\times_ML\to {DL}[1]$.
Given a Courant--Jacobi algebroid $(E;L)$, a Dirac--Jacobi structure is a subbundle $A\subset E$ which is Lagrangian wrt the product $\ldab-,-\rdab$ and involutive wrt the bracket $\ldsb-,-\rdsb$.
This is actually the line bundle theoretic version of the notion of Dirac--Jacobi structures originally introduced in~\cite{grabowski2002graded} generalizing Dirac structures.

In this paper we aim at developing the deformation theory of Dirac--Jacobi structures.
In doing so we meet a substantial difference with the analogous problem for Dirac structures.
Indeed, the deformation problem of Dirac structures has been handled by the combination of two different approaches.
On the one hand, for each choice of a complementary almost Dirac structure, the $L_\infty$ algebra controlling the small deformation of a given Dirac structure $A$ was first constructed in~\cite{fregier_zambon_2015,roytenberg2002quasi} using the interpretation of Courant algebroids as degree $2$ symplectic $\bbN Q$ manifolds and Voronov's higher derived brackets. 
On the other hand, the approach to Dirac structures via their corresponding pure spinors (for the Clifford algebra of the ambient Courant algebroid) has been successfully used in~\cite{gualtieri2011generalized} to prove that different choices of the complementary almost Dirac structure give rise to canonically isomorphic $L_\infty$ algebras.
However, a similar spinorial description of Dirac--Jacobi structures is still unavailable.
So, in this paper, the deformation theory of Dirac--Jacobi structures will be entirely developed using only the interpretation of Courant--Jacobi algebroids as degree $2$ contact $\bbN Q$ manifolds and Voronov's higher derived brackets.
Below we sketch the main results: their statements will be made precise and better explained in the body of the paper.

\begin{theorem*}[{\bf A}]
	For each split Courant--Jacobi algebroid $(A\oplus A^\dagger;L)$, there is an associated curved cubic $L_\infty$ algebra $(\Omega^\bullet(A;L)[1],\{\mu_k\})$ which fully encodes the split Courant--Jacobi algebroid structure.
\end{theorem*}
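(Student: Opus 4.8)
The plan is to present the curved cubic $L_\infty$ algebra as an instance of Voronov's higher derived brackets, applied to a piece of ``$V$-data'' extracted from the graded-contact description of the split Courant--Jacobi algebroid. First I would recall (following Grabowski) that a split Courant--Jacobi algebroid $(A\oplus A^\dagger;L)$ is the same datum as a degree $2$ contact $\bbN Q$ manifold, that is: a degree $2$ contact $\bbN$ manifold $\calM$, with underlying line bundle $\calL\to\calM$ carrying a non-degenerate degree $-2$ Jacobi bracket $\{-,-\}$ on $\Gamma(\calL)$, together with its structure section $\Theta\in\Gamma(\calL)$ --- a degree $3$ element, the Hamiltonian of the cohomological contact vector field, which is a Maurer--Cartan element of the graded Lie algebra $(\Gamma(\calL)[2],\{-,-\})$, i.e.\ $\{\Theta,\Theta\}=0$. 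The direct sum decomposition $E=A\oplus A^\dagger$ refines the $\bbN$-grading of $\calM$ by an auxiliary $\bbZ$-grading under which the weight-one coordinates separate into ``$A$-directions'' and ``$A^\dagger$-directions''; in such adapted coordinates the bracket $\{-,-\}$ pairs $A$-directions with $A^\dagger$-directions, and the Lagrangian condition built into the word ``split'' is precisely the statement that each of these two families of directions is isotropic for $\{-,-\}$.

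With this in hand I would set up the $V$-data. Consider the graded Lie algebra $\frakg:=(\Gamma(\calL)[2],\{-,-\})$; let $\fraka\subset\frakg$ be the subspace of sections that depend only on the $A$-directions --- canonically identified with a shift of the space $\Omega^\bullet(A;L):=\Gamma(\wedge^\bullet A^\ast\otimes L)$ of $L$-valued forms on $A$ --- and let $P\colon\frakg\to\fraka$ be the ``evaluation along the $A^\dagger$-directions'' projection. The isotropy just noted makes $\fraka$ an abelian graded Lie subalgebra and $\ker P$ a graded Lie subalgebra, so $(\frakg,\fraka,P)$ together with the odd element $\Theta$ is a bona fide set of $V$-data; since in general $\Theta\notin\ker P$, the construction outputs a \emph{curved} $L_\infty$ algebra. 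Voronov's theorem then produces on $\fraka$, equivalently (after the usual degree shift) on $\Omega^\bullet(A;L)[1]$, the brackets
\[
\mu_k(a_1,\dots,a_k)=P\{\{\cdots\{\Theta,a_1\},a_2\},\dots,a_k\},\qquad k\ge0,
\]
whose $L_\infty$ identities are equivalent to $\{\Theta,\Theta\}=0$ --- hence to the structure equations of the split Courant--Jacobi algebroid --- and whose curvature $\mu_0=P(\Theta)$ is exactly the obstruction for $A$ itself to be a Dirac--Jacobi structure.

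Cubicity is then a bidegree count. Decomposing $\Theta$ according to how many $A^\dagger$-directions it involves gives $\Theta=\Theta_0+\Theta_1+\Theta_2+\Theta_3$, with no term of order $\ge4$ because $\Theta$ has total $\bbN$-weight $3$ while the $A^\dagger$-directions carry positive weight; since each bracket against an element of $\fraka$ consumes one such direction before $P$ is applied, $\mu_k$ depends only on $\Theta_k$ and vanishes for $k\ge4$. To see that the $L_\infty$ algebra ``fully encodes'' the split structure, I would then unwind the resulting dictionary: the four components $\Theta_0,\dots,\Theta_3$ are in one-to-one correspondence with the structural data of a split Courant--Jacobi algebroid (the anchor $\nabla$, the graded pieces of the Loday bracket $\ldsb-,-\rdsb$, and the defect measuring the non-involutivity of $A$), so that $\Theta\mapsto(\mu_0,\mu_1,\mu_2,\mu_3)$ is a bijection and $\Theta$ --- hence the whole split Courant--Jacobi algebroid --- is recovered from the $\mu_k$.

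The step I expect to be the real obstacle is foundational rather than computational: one has to verify that Voronov's construction, designed for graded Lie (resp.\ graded Poisson) algebras, goes through verbatim here, where $\{-,-\}$ is only a \emph{Jacobi} bracket on the sections of a line bundle rather than a Poisson bracket on functions. The point is that $(\Gamma(\calL)[2],\{-,-\})$ is nonetheless a genuine graded Lie algebra, so the line-bundle twist enters only through the concrete realization of $\fraka$, $P$ and $\Theta$ and not through the abstract input of the derived-bracket machine; but building the degree $2$ contact $\bbN Q$ model carefully, fixing the adapted coordinates and the isotropy bookkeeping, and getting the degree shifts and signs right in the identification of $\fraka$ with a shift of $\Omega^\bullet(A;L)$, is where the genuine work lies.
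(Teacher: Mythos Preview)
Your proposal is correct and follows essentially the same strategy as the paper: identify the split Courant--Jacobi algebroid with a Maurer--Cartan element $\Theta$ in the graded Lie algebra $(\Gamma(\calL)[2],\{-,-\})$ coming from the degree $2$ contact $\bbN$-manifold, set up the curved $V$-data $(\frakg,\fraka,P,\Theta)$ with $\fraka\simeq\Omega^\bullet(A;L)[2]$, and apply Voronov's higher derived brackets, the cubicity following from the bidegree decomposition of $\Theta$ into four pieces. The paper makes this explicit by realizing the contact $\bbN$-manifold concretely as the shifted first jet bundle $J^1[2]L_A$ with its canonical contact structure and $\bbN\times\bbN$ bigrading, and uses a contact Legendre transform to identify the components $\Theta_k$ with $\Upsilon_A$, $\rmd_{A,L}$, $\rmd_{A^\dagger,L}$, $\Upsilon_{A^\dagger}$; your concern about the Jacobi (rather than Poisson) setting is, as you already observe, harmless since Voronov's machine only needs a graded Lie algebra.
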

This property of split Courant--Jacobi algebroids, prompts the construction the \emph{deformation $L_\infty$ algebra} of a Dirac--Jacobi structure $A$ in a Courant--Jacobi algebroid $(E;L)$.
\begin{theorem*}[{\bf B}]
	Each almost Dirac--Jacobi structure $B$ complementary to $A$ in $E$ determines a cubic $L_\infty$ algebra structure $\{\lambda^B_k\}$ enriching the shifted de Rham complex $(\Omega^\bullet(A;L)[1],\rmd_{A,L})$.
\end{theorem*}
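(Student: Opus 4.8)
The plan is to obtain Theorem~B as a specialization of Theorem~A, the only additional ingredient being the hypothesis that $A$ is a genuine Dirac--Jacobi structure (i.e.\ involutive), not merely an almost one.

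First I would turn the pair $(A,B)$ into a splitting. Since $A$ and $B$ are both Lagrangian for the $L$-valued pairing $\ldab-,-\rdab$ and $E=A\oplus B$, this pairing restricts to a perfect duality between $A$ and $B$, hence identifies $B$ with $A^\dagger:=\Hom(A,L)$; transporting the Courant--Jacobi algebroid structure of $(E;L)$ along $E\cong A\oplus A^\dagger$ exhibits the latter as a split Courant--Jacobi algebroid. This step uses no integrability of either $A$ or $B$ — exactly as for ordinary Courant algebroids — which is why $B$ is only required to be an \emph{almost} Dirac--Jacobi structure. Feeding $(A\oplus A^\dagger;L)$ into Theorem~A then produces a curved cubic $L_\infty$ algebra $\bigl(\Omega^\bullet(A;L)[1],\{\mu^B_k\}_{0\le k\le 3}\bigr)$ which fully encodes the split structure; I write $\mu^B_k$ for the structure maps $\mu_k$ of Theorem~A attached to this particular split Courant--Jacobi algebroid.

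The heart of the argument is to read off the low-degree brackets through this dictionary. The curvature $\mu^B_0\in\Omega^3(A;L)$ corresponds to the tensor
\[
T(a_1,a_2,a_3)\;=\;\ldab\ldsb a_1,a_2\rdsb,a_3\rdab,\qquad a_1,a_2,a_3\in\Gamma(A),
\]
which is well defined and totally skew because $A$ is isotropic, and which detects, by non-degeneracy of $\ldab-,-\rdab$, the $A^\dagger$-component of $\ldsb\Gamma(A),\Gamma(A)\rdsb$; hence $T=0$ precisely when $A$ is involutive, i.e.\ precisely when $A$ is a Dirac--Jacobi structure. By hypothesis $A$ is a Dirac--Jacobi structure, so $\mu^B_0=0$ and the curved $L_\infty$ algebra of Theorem~A is in fact flat: setting $\lambda^B_k:=\mu^B_k$ for $1\le k\le 3$ and $\lambda^B_k:=0$ for $k\ge 4$ gives a cubic $L_\infty$ algebra on $\Omega^\bullet(A;L)[1]$, all of whose $L_\infty$ relations (and the vanishing of the higher brackets) are inherited verbatim from Theorem~A once $\mu^B_0=0$ is known. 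It then remains to identify the unary bracket: $\mu^B_1$ is the higher derived bracket built from the part of the Hamiltonian that is linear in the $A^\dagger$-directions, and, $A$ being now a Lie algebroid with the flat $L$-connection induced by $\nabla|_A$, this is exactly the corresponding Chevalley--Eilenberg differential, so $\lambda^B_1=\mu^B_1=\rmd_{A,L}$. Thus $\{\lambda^B_k\}$ enriches the shifted de Rham complex $(\Omega^\bullet(A;L)[1],\rmd_{A,L})$.

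The main obstacle is precisely these two identifications, $\mu^B_0=T$ and $\mu^B_1=\rmd_{A,L}$: although they are conceptually immediate from the phrase ``Theorem~A fully encodes the split structure'', making them explicit requires unwinding Voronov's higher derived brackets for the degree $2$ contact $\bbN Q$ manifold of Theorem~A and evaluating them on the generators of $\Omega^0(A;L)$ and $\Omega^1(A;L)$ — a direct but somewhat lengthy computation with the degree $-2$ bracket $\{-,-\}$. Everything else is a formal consequence of Theorem~A together with the involutivity of $A$.
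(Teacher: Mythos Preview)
Your proposal is correct and follows essentially the same approach as the paper: identify $E\cong A\oplus A^\dagger$ via the pairing, apply Theorem~A to the resulting split Courant--Jacobi algebroid, and then use the involutivity of $A$ to kill the curvature $\mu^B_0=\Upsilon_A$. Note that what you flag as the ``main obstacle'' --- the identifications $\mu^B_0=\Upsilon_A$ and $\mu^B_1=\rmd_{A,L}$ --- is in fact already built into the \emph{statement} of Theorem~A (items (1) and (2) there), so no further unwinding of the higher derived brackets is needed; the paper's proof accordingly reduces to a one-line citation of Theorem~A together with the observation (packaged as a lemma) that $P(\Theta_B)=\Upsilon_A=0$.
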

\begin{theorem*}[{\bf C}]
	Different choices of the complementary almost Dirac--Jacobi structure give rise to canonically isomorphic $L_\infty$ algebras.
\end{theorem*}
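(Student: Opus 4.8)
The plan is to realise the two cubic $L_\infty$ algebras $\{\lambda^B_k\}$ and $\{\lambda^{B'}_k\}$ attached to two almost Dirac--Jacobi complements $B,B'$ of $A$ as the systems of higher derived brackets of two \emph{gauge-equivalent} Maurer--Cartan elements in one and the same graded Lie algebra, and then to invoke Cattaneo and Sch\"atz's equivalence of higher derived brackets. Concretely, I would let $\calM$ be the degree $2$ contact $\bbN$ manifold with contact line bundle $\calL\to\calM$ underlying the Courant--Jacobi algebroid $(E;L)$, let $\frakg:=(\Gamma(\calL)[2],\{-,-\})$ be the associated graded Lie algebra, and let $\Theta\in\frakg^1$ be the Maurer--Cartan element encoding the Courant--Jacobi structure. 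A choice of Lagrangian complement $B$ to $A$ determines an isomorphism of $\calM$ with a split model built out of $A$; under it, $\Theta$ is carried to an element $\Theta_B\in\frakg^1$, the space $\Omega^\bullet(A;L)[1]$ is identified with an abelian Lie subalgebra $\frakh\subset\frakg$ (sections of $\calL$ along the Legendrian $A[1]$), and one obtains a projection $P_B\colon\frakg\to\frakh$. By Theorems A and B, the cubic $L_\infty$ algebra $\{\lambda^B_k\}$ is exactly the system of higher derived brackets of the V-datum $(\frakg,\frakh,P_B,\Theta_B)$; since $A$ is Dirac--Jacobi, the $0$-ary bracket $\mu_0=P_B(\Theta_B)$ vanishes, so $\Theta_B$ is a genuine (curvature-free) Maurer--Cartan element of that datum and $\lambda^B_1=\rmd_{A,L}$.

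The crucial step is to compare the data for $B$ and $B'$. Because $A,B,B'$ are all Lagrangian with respect to $\ldab-,-\rdab$ and $E=A\oplus B=A\oplus B'$, the space of Lagrangian complements to $A$ is an affine space modelled on $\Gamma(\wedge^2 A\otimes L^\ast)$, so $B$ and $B'$ differ by a canonical ``Jacobi bivector'' $\pi=\pi_{B,B'}\in\Gamma(\wedge^2 A\otimes L^\ast)$. Under the split models, $\pi$ corresponds to a degree $2$ section of $\calL$, that is to a \emph{degree $0$} element of $\frakg$ lying in $\ker P_B$, and the transition between the two splitting isomorphisms of $\calM$ is the contact transformation $e^{\{\pi,-\}}$ (well defined, since $\{\pi,-\}$ is locally nilpotent for degree reasons). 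Hence
\[
\Theta_{B'}=e^{\{\pi,-\}}\Theta_B,\qquad P_{B'}=e^{\{\pi,-\}}\circ P_B\circ e^{-\{\pi,-\}}.
\]
In other words $\Theta_B$ and $\Theta_{B'}$ are gauge-equivalent Maurer--Cartan elements of $\frakg$ with gauge parameter $\pi$ in degree $0$ inside $\ker P_B$ --- precisely the hypothesis of the Cattaneo--Sch\"atz equivalence theorem.

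Applying that theorem then yields an explicit $L_\infty$-isomorphism $\Phi_{B,B'}\colon(\Omega^\bullet(A;L)[1],\{\lambda^B_k\})\to(\Omega^\bullet(A;L)[1],\{\lambda^{B'}_k\})$ whose Taylor coefficients are iterated $\frakg$-brackets with $\pi$ followed by the projection, so that $\Phi_{B,B'}$ depends only on $\pi_{B,B'}$ and hence only on the pair $(B,B')$. To upgrade ``isomorphic'' to ``canonically isomorphic'' one would verify the coherence properties $\Phi_{B,B}=\id$ and $\Phi_{B',B''}\circ\Phi_{B,B'}=\Phi_{B,B''}$; the first is immediate, and the second follows from the compatibility of the assignment ``Lagrangian complement $\mapsto$ splitting isomorphism of $\calM$'' with composition of contact transformations, together with the additivity $\pi_{B,B''}=\pi_{B,B'}+\pi_{B',B''}$ provided by the affine structure on complements.

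The main obstacle will be the comparison step above: one must check carefully, in the genuinely \emph{Jacobi} (rather than Poisson) setting of a degree $-2$ bracket on $\Gamma(\calL)$, that $\Omega^\bullet(A;L)[1]$ really is an abelian Lie subalgebra of $\frakg$, that Voronov's and Cattaneo--Sch\"atz's purely algebraic machinery applies verbatim to $\frakg=(\Gamma(\calL)[2],\{-,-\})$ despite the bracket failing to be $C^\infty$-bilinear, and --- most delicately --- that a change of Lagrangian complement to $A$ is implemented by $e^{\{\pi,-\}}$ with $\pi$ of degree $0$ in $\ker P_B$. It is exactly this degree count that turns $\pi$ into a legitimate Maurer--Cartan gauge parameter and thereby reduces Theorem~C to the Cattaneo--Sch\"atz equivalence; the remaining verifications (coherence, and independence of the whole picture from auxiliary choices such as splittings of $DL$) should then be routine.
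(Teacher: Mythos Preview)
Your approach is essentially the paper's: parametrise the change of complement by a section $\epsilon\in\Omega^2(A^\dagger;L)\simeq\Gamma(\calL_A)^{(2,0)}$, show that the two Maurer--Cartan elements are related by the Hamiltonian flow $e^{\{\epsilon,-\}}$, and apply Cattaneo--Sch\"atz.  So the strategy is correct.

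There is, however, one point where your framing is off.  You write $P_{B'}=e^{\{\pi,-\}}\circ P_B\circ e^{-\{\pi,-\}}$, suggesting that the abelian subalgebra and the projection depend on the complement.  In fact, once you pass to the split model $J^1[2]L_A$ (which only requires $A$, not $B$), the subalgebra $\fraka=\Omega^\bullet(A;L)[2]\simeq\Gamma(\calL_A)^{(0,\bullet)}$ and the projection $P$ onto it are \emph{the same} for every complement; only $\Theta_B$ changes.  The derivation ${\sf m}=\{\epsilon,-\}$ has bidegree $(+1,-1)$, so it does \emph{not} preserve $\fraka$ and $e^{\sf m}$ does not conjugate $P$ to itself.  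What matters for Cattaneo--Sch\"atz is only that ${\sf m}$ preserves $\ker P$ (it does, since $\ker P=\Gamma(\calL_A)^{(\geq 1,\bullet)}$) and is pronilpotent; their theorem is precisely designed for the situation where $(\frakg,\fraka,P)$ is fixed and the Maurer--Cartan element flows.  So drop the displayed formula for $P_{B'}$ and keep only $\Theta_{B'}=e^{\{\epsilon,-\}}\Theta_B$.

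A second minor point: your shift is off by one (the paper works with $\Omega^\bullet(A;L)[2]$ for the $L_\infty[1]$ picture, not $[1]$), and your gauge parameter $\pi$ lives in $\Omega^2(A^\dagger;L)$, i.e.\ has bidegree $(2,0)$ and hence degree $0$ in $\frakg=\Gamma(\calL_A)[2]$ --- exactly as needed.  The coherence discussion you add ($\Phi_{B,B}=\id$ and the cocycle identity) is a nice complement; the paper does not spell it out.
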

Here we use the equivalence of higher derived brackets~\cite{CS} to prove the independence (up to $L_\infty$ isomorphisms) of the $L_\infty$ algebra $(\Omega^\bullet(A;L)[1],\{\lambda_k^B\})$ on the choice of $B$.
This immediately leads to an alternative proof of the analogous results for Dirac structures (already proved in~\cite{gualtieri2020deformation} using different methods).
Finally, we address the deformation problem.
\begin{theorem*}[{\bf D}]
	For each complementary almost Dirac--Jacobi structure $B$, the cubic $L_\infty$ algebra $(\Omega^\bullet(A;L)[1],\{\lambda_k^B\})$ controls the small deformations of $A$.
	Indeed, there is a canonical bijection between:
	\begin{itemize}
		\item those Dirac--Jacobi structures $A^\prime\subset E$ that are close to $A$ w.r.t.~$B$ in sense that $E=A^\prime\oplus B$, and
		\item the MC elements of $(\Omega^\bullet(A;L)[1],\{\lambda_k^B\})$, i.e.~those $\eta\in\Omega^2(A;L)$ satisfying the MC equation
		\begin{equation*}
			\rmd_{A,L}\eta+\frac{1}{2}\lambda_2^B(\eta,\eta)+\frac{1}{6}\lambda_3^B(\eta,\eta,\eta)=0.
		\end{equation*}
	\end{itemize}
\end{theorem*}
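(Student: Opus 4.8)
\medskip\noindent\emph{Proof plan.}\quad The plan is to separate the statement into a purely linear-algebraic parametrisation of the candidate deformations, and then to identify the involutivity of a deformed subbundle with the Maurer--Cartan condition by working inside the graded Lie algebra $(\Gamma(\calL)[2],\{-,-\})$ attached to the split Courant--Jacobi algebroid. For the first part, I would observe that since $A$ and $B$ are complementary Lagrangian subbundles, the $L$-valued pairing $\ldab-,-\rdab$ restricts to a non-degenerate pairing $A\otimes B\to L$, identifying $B\cong A^\dagger:=\Hom(A,L)$; thus $(E;L)=(A\oplus A^\dagger;L)$ is a split Courant--Jacobi algebroid as in Theorem~A. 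Any subbundle $A^\prime\subset E$ with $E=A^\prime\oplus B$ is the graph of a unique bundle map $\eta^\sharp\colon A\to B\cong A^\dagger$, and a short computation with the pairing shows that $A^\prime$ is isotropic --- hence, by a dimension count, Lagrangian --- precisely when $\eta^\sharp$ is skew, i.e.\ when it corresponds to an element $\eta\in\Omega^2(A;L)=\Gamma(\wedge^2A^\ast\otimes L)$. This already produces a bijection, with no integrability involved, between $\Omega^2(A;L)$ and the Lagrangian subbundles transverse to $B$; writing $A^\prime_\eta$ for the subbundle attached to $\eta$, it remains to prove that $A^\prime_\eta$ is involutive (equivalently, a Dirac--Jacobi structure) exactly when $\eta$ solves the displayed equation.

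Next I would pass to the degree~$2$ contact $\bbN Q$ manifold $\calM$ with line bundle $\calL$ attached to the splitting $A\oplus A^\dagger$, with its graded Lie algebra $\frakg=(\Gamma(\calL)[2],\{-,-\})$, abelian subalgebra $\fraka\cong\Omega^\bullet(A;L)[1]$, projection $P\colon\frakg\to\fraka$, and cohomological Hamiltonian $\Theta$ encoding $(E;L)$, so that by construction $\{\lambda^B_k\}$ are Voronov's higher derived brackets $\lambda^B_k(x_1,\dots,x_k)=P\{\cdots\{\{\Theta,x_1\},x_2\}\cdots,x_k\}$ and $\rmd_{A,L}=\lambda^B_1$. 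Because $A$ is a Dirac--Jacobi structure, the curvature of the $L_\infty$ algebra of Theorem~A vanishes, $P(\Theta)=0$ --- this is exactly what upgrades that curved algebra to the honest one of Theorem~B. Unwinding Voronov's derived-bracket formulas, an element $\eta\in\fraka$ of the appropriate degree --- which after the shift is $\Omega^2(A;L)$ --- satisfies $\sum_{k\ge1}\tfrac1{k!}\lambda^B_k(\eta,\dots,\eta)=0$ if and only if $P(\Theta^\eta)=0$, where
\[
	\Theta^\eta:=\Theta+\{\Theta,\eta\}+\tfrac12\{\{\Theta,\eta\},\eta\}+\tfrac16\{\{\{\Theta,\eta\},\eta\},\eta\},
\]
the further iterated brackets vanishing for the same weight reasons that make $\{\lambda^B_k\}$ cubic in Theorems~A--B; using $P(\Theta)=0$, the equation $P(\Theta^\eta)=0$ is verbatim the displayed Maurer--Cartan equation $\rmd_{A,L}\eta+\tfrac12\lambda^B_2(\eta,\eta)+\tfrac16\lambda^B_3(\eta,\eta,\eta)=0$.

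The key geometric step is to recognise $\Theta^\eta$ as the ``$B$-field transform'' of $\Theta$ by $\eta$. I would check that the bundle map $e^\eta\colon E\to E$, $a+b\mapsto a+\eta^\sharp(a)+b$ for $a\in A,\ b\in B$, preserves the pairing $\ldab-,-\rdab$, restricts to the identity on $B$, and carries $A$ onto $A^\prime_\eta$; it therefore identifies the splitting $E=A\oplus B$ with $E=A^\prime_\eta\oplus B$ and induces a contactomorphism of the associated degree~$2$ contact $\bbN$ manifolds. Under this identification, the cohomological Hamiltonian of the fixed Courant--Jacobi algebroid $(E;L)$ computed in the splitting $E=A^\prime_\eta\oplus B$ corresponds, in the model attached to $A\oplus A^\dagger$, to $\Theta^\eta$; this is the Jacobi analogue of the classical rule for the transformation of the Dorfman bracket and the anchor under a $B$-field, and it is precisely the computation underlying Theorem~A, applied compatibly to the two splittings. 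Applying the curvature criterion of Theorem~A in the $A^\prime_\eta\oplus B$ model --- $A^\prime_\eta$ is a Dirac--Jacobi structure iff its curvature vanishes --- and transporting that statement via $e^\eta$ into the $A\oplus A^\dagger$ model, I obtain that $A^\prime_\eta$ is a Dirac--Jacobi structure iff $P(\Theta^\eta)=0$. Chaining this with the previous two steps gives the desired canonical bijection $\eta\leftrightarrow A^\prime_\eta$ between Maurer--Cartan elements of $(\Omega^\bullet(A;L)[1],\{\lambda^B_k\})$ and Dirac--Jacobi structures $A^\prime$ with $E=A^\prime\oplus B$.

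I expect the main obstacle to be this last step: proving that the graded transform $\Theta\mapsto\Theta^\eta$ really implements the change of Lagrangian complement from $A$ to $A^\prime_\eta$ and is compatible with the projections $P$ attached to the two splittings. This requires the explicit placement of $\Omega^\bullet(A;L)$, $\rmd_{A,L}$ and the $\lambda^B_k$ inside $\Gamma(\calL)$, some careful weight/bidegree bookkeeping to see that the exponential $e^{\{\eta,-\}}$ truncates at the cubic term (so that the contact Hamiltonian flow of $\eta$ is a bona fide polynomial automorphism, $\{\eta,-\}$ being nilpotent on each weight component), and the observation that both projections are ``restriction to forms of the first summand valued in $L$'', intertwined by $e^\eta$. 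The remaining ingredients --- the graph parametrisation of the first step, the unwinding of Voronov's derived brackets, and the curvature criterion of Theorem~A --- are either elementary linear algebra or already available from the earlier results.
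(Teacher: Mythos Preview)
Your proposal is correct and follows essentially the same approach as the paper: graph-parametrise Lagrangian complements of $B$ by $\Omega^2(A;L)$, then use the pronilpotent Hamiltonian derivation $\Delta=\{\eta,-\}$ (your $e^{\{\eta,-\}}$) to identify the MC equation with the vanishing of $P$ applied to the transformed Hamiltonian, and finally use that this automorphism sends $A$ to $\gr(\eta)$.

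One small streamlining in the paper compared with your plan: rather than introducing a second model attached to the splitting $A^\prime_\eta\oplus B$ and comparing its projection with the original $P$ (your anticipated ``main obstacle''), the paper stays in the single model $A\oplus A^\dagger$ throughout. It simply observes that $(e^{-\Delta})^\ast\Theta_B$ is again an MC element encoding a \emph{new} split Courant--Jacobi structure on $A\oplus A^\dagger$, and that $P((e^{-\Delta})^\ast\Theta_B)=0$ says exactly that $A$ is involutive for this new structure; since $e^\Delta$ is a Jacobi automorphism carrying $\Gamma(A)$ to $\Gamma(\gr(\eta))$, this is equivalent to $\gr(\eta)$ being involutive for the original structure. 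This avoids having to match two different projections and makes the bidegree bookkeeping (your truncation at the cubic term, from $\Delta$ having bidegree $(-1,1)$ and $\Theta_B$ total degree~$3$) the only thing to check.
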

As a by-product of the latter, we also identify the infinitesimal deformations of a Dirac--Jacobi structure $A$ and find sufficient criteria for the existence of obstructions.
Additionally, the latter result immediately suggests applications to the deformation problems of related geometric structures like pre-contact structures, regular Jacobi structures (namely locally conformally symplectic and contact foliations), and generalized complex structures on odd-dimensional manifolds.
The investigation of these related deformation problems will be pursued in future work.

\textbf{Structure of the paper.}
Section~\ref{sec:Courant-Jacobi_algebroids} reviews the line bundle approach to Jacobi algebroids, Courant--Jacobi algebroids and Dirac--Jacobi structures.
Section~\ref{sec:degree_2_NQ-manifolds} describes in detail the canonical contact structure of the shifted first jet bundle $J^1[2]L_\calQ$ of a line bundle generated in degree $0$ over an $\bbN$ manifold $\calQ$.
For the aims of the paper, we focus our attention on the case of an $\bbN$ manifold of degree $1$, i.e.~$\calQ=A[1]$, for some vector bundle $A\to M$.
In particular, in this setting, we construct a contact version of the Legendre transform in Theorem~\ref{theor:contact_Legendre}.
As special case of the identification of Courant--Jacobi algebroids with degree $2$ contact $\bbN$ manifolds, Section~\ref{sec:split-CJ_algebroids} identifies split Courant algebroids $(A\oplus A^\ast;L)$ with the degree $2$ contact $\bbN$ manifold $J^1[2]L_A$, equipped with a cohomological degree $1$ contact vector field $X_\Theta$.
In Theorem~\ref{theor:higher_derived_brackets:splitCJ} we use this identification to construct the curved cubic $L_\infty$ algebra associated with the split Courant--Jacobi algebroid.
In the final Section~\ref{sec:deformation_theory:DJ}, we address the deformation problem of a Dirac--Jacobi structure $A$.
First, we construct the deformation $L_\infty$ algebra of a Dirac--Jacobi structure $A$.
Specifically, in Theorem~\ref{theor:deformation_L_infty_algebra}, we prove that each choice of a complementary almost Dirac--Jacobi structure $B$ gives rise to a cubic $L_\infty$ algebra and, in Theorem~\ref{theor:GMS}, we show that different choices of $B$ give rise to canonically isomorphic $L_\infty$ algebras.
Then, in Theorem~\ref{theor:DJ_deformation:MC-elements}, we prove that this $L_\infty$ algebra controls the deformation problem of the given Dirac--Jacobi structure $A$.
Additionally, we also identify the infinitesimal deformations and find sufficient criteria for the existence of obstructions.
Finally, for the reader's convenience, Appendix~\ref{app:L_infty_algebras} reviews the identification of (morphisms of) $L_\infty$ algebras with (morphisms of) codifferential coalgebras and recalls, without proofs, the results on the equivalence of higher derived brackets from~\cite{CS}.

\textbf{Notation.}
We assume the reader is familiar with the fundamentals of Lie algebroids.
For any vector bundle $E\to M$, we denote by $DE\to M$ the \emph{gauge (or Atiyah) algebroid} of $E$.
The sections of $DE$ are the \emph{derivations of $E$}, i.e.~those linear first order differential operators $\Delta\in\Gamma((J^1E)^\ast\otimes E)$ whose symbol $\sigma(\Delta)\in\Gamma(TM\otimes\operatorname{End}(E))$ takes values in the vector subbundle $TM\simeq TM\otimes\langle\id_E\rangle\subset TM\otimes\operatorname{End}(E)$.
Then the Lie bracket on $\calD E:=\Gamma(DE)$ is the ordinary commutator and the anchor is the well-defined symbol map $DE\to TM,\ \Delta,\mapsto\sigma(\Delta)$.
In particular, $DL=(J^1L)^\ast\otimes L$, for any line bundle $L\to M$.
Another relevant tool is the language of $L_\infty$ algebras~\cite{lada1995strongly}.
In the body of the paper we will always work with the equivalent notion of $L_\infty[1]$ algebra (cf., e.g.,~\cite{fiorenza2007cones}) in which all the multibrackets are degree $1$ graded symmetric.
Additionally, the paper requires also familiarity with graded geometry, for which we refer the reader to~\cite{mehta2006supergroupoids}.

\section{Jacobi Algebroids, Courant--Jacobi algebroids and Dirac--Jacobi structures}
\label{sec:Courant-Jacobi_algebroids}

In order to set what will be our general framework, this section reviews the line bundle approach to Jacobi algebroids, Courant--Jacobi algebroids and Dirac--Jacobi structures.
For more details on, specifically, the line bundle approach to (pre-)contact and Jacobi structures we refer the reader to~\cite[Section~2]{blaga2020contact}, \cite[Section~2]{schnitzer2021weak} and references therein.

\subsection{Jacobi Algebroids}
\label{sec:Jacobi_algebroids}
Jacobi algebroids were first introduced in~\cite{grabowski2001jacobi}  and~\cite{Iglesias2001generalizedLiebialgbds} under the name generalized Lie algebroids.
Here, following~\cite[Definition 2.5]{le2018deformations}, we recall a more general definition, adapted to the realm of non-necessarily trivial line bundles.

\begin{definition}
	\label{def:Jacobi algebroid}
	A \emph{Jacobi algebroid} over a manifold $M$ consists of a Lie algebroid $A\to M$, with Lie bracket $[-,-]$ and anchor $\rho$, and a line bundle $L\to M$ endowed with a flat $A$-connection $\nabla:A\to DL$.
\end{definition}

\begin{remark}
	\label{rem:Kirillov_algebroid}
	Jacobi algebroids are equivalent to Grabowski's Kirillov algebroids~\cite[Section 8]{Grabowski2013gradedcontactCourant}.
\end{remark}


\begin{example}
	\label{ex:tautological_representation_DL}
	For any line bundle $L\to M$, the gauge algebroid $DL\to M$ has a tautological representation $\nabla$ on $L$ given by the identity map, i.e.~$\nabla_\Delta=\Delta$ for all $\Delta\in\calD L$.
	The gauge algebroid $DL$ with its tautological representation on $L$ represent the prototypical example of a Jacobi algebroid.
\end{example}

In this paper we also need to work with the ``almost'' version of a Jacobi algebroid which is obtained from Definition~\ref{def:Jacobi algebroid} removing the Jacobi identity of $[-,-]$ and the flatness of $\nabla$ as in the following.
\begin{definition}
	\label{def:almost_Jacobi_algebroid}
	An \emph{almost Jacobi algebroid} over a manifold $M$ consists of an almost Lie algebroid $A\to M$, a line bundle $L\to M$ and a VB morphism $\nabla:A\to DL,\ u\mapsto\nabla_u,$ over $\id_M$, such that
	\begin{equation*}
		[u,fv]=(\sigma(\nabla_u)f)v+f[u,v],\qquad\text{for all $u,v\in\Gamma(A)$ and $f\in C^\infty(M)$.}
	\end{equation*}
\end{definition}

Denote by $\bbR_M$ the trivial line bundle over manifold $M$, i.e.~$\bbR_M=M\times\bbR$.
Then $\Gamma(\bbR_M)=C^\infty(M)$ and $\Delta f=\sigma(\Delta)f+f(\Delta 1)$ for all $\Delta\in\calD\bbR_M:=\Gamma(D\bbR_M)$ and $f\in C^\infty(M)$.
Hence there is a canonical identification $\bbD\bbR_M\overset{\sim}{\to}TM\oplus\bbR_M,\ \delta\mapsto(\sigma(\delta),\delta(1))$ that we will understand in the following.

\begin{remark}
	\label{rem:Lie_algebroid}
	Each (almost) Lie algebroid $A\to M$, with structure maps $([-,-],\rho)$, identifies with the (almost) Jacobi algebroid $(A;\bbR_M)$, with structure maps $([-,-],\nabla)$, where $\nabla=\rho:A\to TM\subset D\bbR_M$.
\end{remark}

\begin{remark}[\textbf{The de Rham differential}]
	\label{rem:deRham_differential}
Let $(A;L)$ be an almost Jacobi algebroid.
Denote by $\Omega^\bullet(A)$ the graded commutative algebra of forms on $A$ and by $\Omega^\bullet(A;L)$ the graded $\Omega^\bullet(A)$-module of $L$-valued forms on $A$, so that
\begin{equation*}
	\Omega^\bullet(A):=\Gamma(\wedge^\bullet A^\ast)\qquad\text{and}\qquad \Omega^\bullet(A;L):=\Gamma(\wedge^\bullet A^\ast\otimes L).
\end{equation*}
Then one can construct the degree $1$ graded $\bbR$-linear map $\rmd_{A,L}:\Omega^\bullet(A;L)\to\Omega^\bullet(A;L)$ associated with $(A;L)$ by the following Koszul-like formula
\begin{equation}
	\label{eq:de_Rham_differential}
	\begin{aligned}
	(\rmd_{A,L}\alpha)(X_0,\ldots,X_k)=&\sum_i(-)^i\nabla_{X_i}(\alpha(X_1,\ldots,\widehat{X_i},\ldots,X_k))
	\\
	&+\sum_{i<j}(-)^{i+j}\alpha([X_i,X_j],X_1,\ldots,\widehat{X_i},\ldots,\widehat{X_j},\ldots,X_k),
\end{aligned}
\end{equation}
for all $\alpha\in\Omega^k(A;L)$ and $X_0,X_1,\ldots,X_k\in\Gamma(A)$.
A similar formula defines the degree $1$ graded linear map $d_A:\Omega^\bullet(A)\to\Omega^\bullet(A)$ associated with the almost Lie algebroid $A$.
Consequently, as it is easy to check, $\rmd_{A,L}$ is also characterized by the following two conditions
\begin{equation}
	\label{eq:de_Rham_differential:bis}
	(\rmd_{A,L}\lambda)(X)=\nabla_X\lambda\qquad\text{and}\qquad \rmd_{A,L}(\omega\cdot\eta)=(\rmd_A\omega)\cdot\eta+(-)^{k}\omega\cdot(\rmd_{A,L}\eta),
\end{equation}
for all $X\in\Gamma(A)$, $\lambda\in\Gamma(L)$, $\omega\in\Omega^k(A)$ and $\eta\in\Omega^\bullet(A;L)$.
By the Leibniz rule in the RHS of Equation~\eqref{eq:de_Rham_differential:bis} one gets that $\rmd_{A,L}$ is a degree $1$ graded derivation of the graded $\Omega^\bullet(A)$-module $\Omega^\bullet(A;L)$ whose symbol is the degree $1$ graded derivation $\rmd_A$ of the graded algebra $\Omega^\bullet(A)$.
Finally, as it is easy to prove, notice that
\begin{equation*}
	\text{$\rmd_{A,L}$ is cohomological, i.e.~$\rmd_{A,L}^2=0$}\ \Longleftrightarrow\ \text{$(A;L)$ is a Jacobi algebroid.}
\end{equation*}
In this case, $\rmd_{A,L}$ is the \emph{Lie algebroid de Rham differential} of $A$ with values in $L$ and there exists a Cartan calculus on $\Omega^\bullet(A;L)$.
In addition to $\rmd_{A,L}$, the structural operations of this calculus are given, for any $X\in\Gamma(A)$, by the \emph{contraction} and the \emph{Lie derivative} along $X$
\begin{equation*}
	\iota_X\colon\Omega^\bullet(A;L)\to\Omega^{\bullet-1}(A;L),\qquad\scrL_X:=[\rmd_{A,L},\iota_X]\colon\Omega^\bullet(A;L)\to\Omega^\bullet(A;L),
\end{equation*}
where $[-,-]$ is the graded commutator.
These operations are related through the following identities
\begin{equation*}
	\label{eq:Cartan_identities}
	\begin{gathered}{}
		[\iota_X,\scrL_Y]=\iota_{[X,Y]},\quad [\scrL_X,\scrL_Y]=\scrL_{[X,Y]},
		\quad
		[\rmd_{A,L},\scrL_X]=[\iota_X,\iota_Y]=0,\qquad\text{for all}\ X,Y\in\Gamma(A).
	\end{gathered}
\end{equation*}
\end{remark}

\begin{example}
	\label{ex:der-differential}
	Applying the construction from Remark~\ref{rem:deRham_differential} to $(DL;L)$, i.e.~the Jacobi algebroid formed by the gauge algebroid $DL$ and its tautological representation on $L$, one obtains the \emph{der-complex}~\cite{rubtsov1980cohomology} of \emph{$L$-valued Atiyah forms on $M$}, i.e.~the Lie algebroid de Rham complex of $DL$ with coefficients in $L$, that we will denote by $(\Omega_D^\bullet(L),\rmd_D)$.
	Then, as pointed out in~\cite[Prop.~3.3]{vitagliano2018dirac}, the \emph{$L$-valued presymplectic Atiyah forms}, i.e.~the $2$-cocycles in $(\Omega_D^\bullet(L),\rmd_D)$, describe exactly the pre-contact structures on the manifold $M$, with underlying line bundle $L\to M$.
	So, one gets the following identification
	\begin{equation}
		\label{eq:precontact_structures}
		\{\text{precontact structures with underlying $L\to M$}\}\overset{\sim}{\longrightarrow}\operatorname{MC}(\Omega^\bullet_D(L)[1],\rmd_D,0),
	\end{equation}
	and the deformations of such a precontact structures are governed by the dgLa $(\Omega^\bullet_D(L)[1],\rmd_D,0)$.
\end{example}

\begin{remark}[\textbf{The Gerstenhaber--Jacobi bracket}]
	\label{rem:GJ_bracket}
	Let $(A;L)$ be an almost Jacobi algebroid.
	Denote by $A^\dagger\to M$ the \emph{$L$-twisted dual of $A$} defined by $A^\dagger:=A^\ast\otimes L$, so that
	\begin{equation*}
		\Omega^\bullet(A^\dagger):=\Gamma(\wedge^\bullet (A\otimes L^\ast))\qquad\text{and}\qquad \Omega^\bullet(A^\dagger;L):=\Gamma(\wedge^\bullet (A\otimes L^\ast)\otimes L),
	\end{equation*}
	In particular, $\Omega^0(A^\dagger;L)=\Gamma(L)$ and $\Omega^1(A^\dagger;L)=\Gamma(A)$.
	Then there exists a unique degree $0$ graded skew-symmetric $\bbR$-linear map $[-,-]_{A,L}:\Omega^\bullet(A^\dagger;L)[1]\times\Omega^\bullet(A^\dagger;L)[1]\to\Omega^\bullet(A^\dagger;L)[1]$ such that
	\begin{itemize}
		\item it is a graded derivation of the graded $\Omega^\bullet(A^\dagger)$-module $\Omega^\bullet(A^\dagger;L)[1]$ in each entry separately,
		\item it extends both $[-,-]$ and $\nabla$ in the sense that, for all $u,v\in\Gamma(A)$ and $\lambda\in\Gamma(L)$,
		\begin{equation*}
			[u,v]_{A,L}=[u,v]\quad\text{and}\quad[u,\lambda]_{A,L}=\nabla_u\lambda.
		\end{equation*}
	\end{itemize}
	The latter is called the \emph{almost Gerstenhaber--Jacobi bracket} associated with $(A;L)$.
	Finally, notice that
		\begin{equation*}
			\text{$(\Omega^\bullet(A^\dagger;L)[1],[-,-]_{A,L})$ is a graded Lie algebra}\Longleftrightarrow\text{$(A;L)$ is a Jacobi algebroid.}
		\end{equation*}
	In this case, $[-,-]_{A,L}$ is called the \emph{Gerstenhaber-Jacobi bracket} associated with the Jacobi algebroid $(A;L)$, and makes the pair $(\Omega^\bullet(A^\dagger),\Omega^\bullet(A^\dagger;L))$ into a \emph{Gerstenhaber--Jacobi algebra}~\cite[Def.~A.5]{le2017bfv}.
\end{remark}

\begin{example}
	\label{ex:SJ_brackets}
	Applying the construction from Remark~\ref{rem:GJ_bracket} to $(DL;L)$, one recovers the \emph{Schouten--Jacobi bracket} $[-,-]_{\sf SJ}$ that makes $(\calD^\bullet L)[1]$ into a graded Lie algebra, where $\calD^\bullet L:=\Gamma(\wedge^\bullet(J^1L)^\ast\otimes L)$ is the graded space of first order multi-differential operators from $L$ to itself.
	Then, as first pointed out in~\cite[Theor.~1.b, (28), (29)]{grabowski2001jacobi} for trivial line bundles, the \emph{Jacobi structures} on the line bundle $L\to M$ identify with the Maurer--Cartan elements of the graded Lie algebra $(\calD^\bullet(L)[1],[-,-]_{\sf SJ})$ (see also~\cite[Lemma~ 2.8(2)]{le2018deformations}).
	So, one gets the following identification
	\begin{equation}
		\label{eq:Jacobi_structures}
		\{\text{Jacobi structures on $L\to M$}\}\overset{\sim}{\longrightarrow}\operatorname{MC}((\calD^\bullet L)[1],0,[-,-]_{\sf SJ}),
	\end{equation}
	and the deformations of such a Jacobi structure $J$ are governed by the dgLa $(\calD^\bullet L)[1],\rmd_J,[-,-]_{\sf SJ})$, where $\rmd_J:=[J,-]_{\sf SJ}:\calD^\bullet L\to \calD^\bullet L$ is the Jacobi--Lichnerowicz differential determined by $J$.
\end{example}

\begin{definition}
	\label{def:Jacobi bialgebroid}
	A \emph{Jacobi bialgebroid} $(A,A^\dagger;L)$ over a manifold $M$ consists of a pair of Jacobi algebroids $(A;L)$ and $(A^\dagger;L)$ over $M$ such that the de Rham differential $\rmd_{A,L}$ is a degree $1$ graded derivation of the Gerstenhaber--Jacobi bracket $[-,-]_{A^\dagger,L}$, i.e.~the following graded Leibniz rule holds
	\begin{equation*}
		\rmd_{A,L}[\eta,\omega]_{A^\dagger,L}=[\rmd_{A,L}\eta,\omega]_{A^\dagger,L}+(-)^{|\eta|}[\eta,\rmd_{A,L}\omega]_{A^\dagger,L},
	\end{equation*}
	for all homogeneous $\eta,\omega\in\Omega^\bullet(A^\dagger;L)[1]\simeq\Omega^\bullet(A;L)[1]$, where $|\eta|$ denotes the degree of $\eta$.
\end{definition}

\begin{remark}
	As it is easy to see, Definition~\ref{def:Jacobi bialgebroid} is symmetric in $A$ and $A^\dagger$.
	Indeed, $A^{\dagger\dagger}\simeq A$ canonically, and $(A,A^\dagger;L)$ is a Jacobi bialgebroid iff $(A^\dagger,A;L)$ is a Jacobi bialgebroid.
\end{remark}

\begin{remark}
	\label{rem:Jacobi_algebroid|Jacobi_bialgebroids}
	Each Jacobi algebroid $(A;L)$ identifies with the Jacobi bialgebroid $(A,A^\dagger;L)$, where $(A^\dagger;L)$ is the trivial Jacobi algebroid, i.e.~the Lie bracket on $\Gamma(A^\dagger)$ and $\nabla:A^\dagger\to DL$ are both zero.
\end{remark}

\subsection{Courant--Jacobi Algebroids}
\label{sec:CJ_algebroids}
Following Grabowski~\cite{Grabowski2013gradedcontactCourant}, we recall the notion of Courant--Jacobi algebroid slightly generalizing the similar notion earlier introduced by Grabowski and Marmo~\cite[Def.~1]{grabowski2002graded}

\begin{definition}
	\label{def:Courant-Jacobi_algebroid}
	A \emph{Courant--Jacobi algebroid} on a manifold $M$ consists of a vector bundle $E\to M$ and a line bundle $L\to M$ equipped with:
	\begin{itemize}
		\item a non-degenerate $L$-valued symmetric $C^\infty(M)$-bilinear product $\ldab-,-\rdab:E\otimes E\to L$,
		\item a \emph{Loday bracket}~\cite{yksPoisson1996} on $\Gamma(E)$, i.e.~a (not necessarily skew-symmetric) $\bbR$-bilinear map $\ldsb-,-\rdsb:\Gamma(E)\times\Gamma(E)\to\Gamma(E)$ satisfying the Jacobi identity written in Leibniz form
		\begin{equation}
			\label{eq:def:Courant-Jacobi_algebroids:Jacobi_identity}
			\ldsb e_1,\ldsb e_2,e_3\rdsb\rdsb=\ldsb\ldsb e_1,e_2\rdsb, e_3\rdsb+\ldsb e_2,\ldsb e_1,e_3\rdsb\rdsb,\qquad\text{for all}\ e_1,e_2,e_3\in\Gamma(E),
		\end{equation}
		\item a VB morphism $\nabla:E\to DL,\ e\mapsto\nabla_e,$ over $\id_M$ mapping sections of $E$ to derivations of $L$,
	\end{itemize}
	such that the following compatibility conditions are satisfied, for all $e_1,e_2,e_3\in\Gamma(E)$,
	\begin{align}
		\ldab\ldsb e_1,e_2\rdsb,e_2\rdab&=\ldab e_1,\ldsb e_2,e_2\rdsb\rdab,\label{eq:def:Courant-Jacobi_algebroids:product}\\
		\nabla_{e_1}\ldab e_2,e_2\rdab&=2\ldab\ldsb e_1,e_2\rdsb,e_2\rdab\label{eq:def:Courant-Jacobi_algebroids:nabla}.
	\end{align}
\end{definition}

\begin{remark}
	\label{rem:contac_Courant_algebroid}
	Courant--Jacobi algebroids are equivalent to Grabowski's contact Courant algebroids (see Definition 12.1 and Theorem 12.1 in~\cite{Grabowski2013gradedcontactCourant}).
	Moreover, Definition~\ref{def:Courant-Jacobi_algebroid} can be rephrased replacing the Loday bracket $\ldsb-,-\rdsb$ on $\Gamma(E)$ with its skew-symmetrization (cf., e.g.,~\cite{das2021contact}).
\end{remark}

The next proposition is the Jacobi version of an analogous result valid for Courant algebroids (cf, e.g., \cite[Theorem 2.1]{yks2005allthat}) and its proof can be found in~\cite[Section~12]{Grabowski2013gradedcontactCourant}.

\begin{proposition}
	\label{prop:Courant-Jacobi_algebroid}
	In any Courant--Jacobi algebroid $(E;L)$ the following additional identities hold:
	\begin{enumerate}[label=(\arabic*)]
		\item
		\label{enumitem:prop:Courant-Jacobi_algebroid:flat}
		the VB morphism $\nabla:E\to DL$ induces a morphism of Loday algebras from $\Gamma(E)$ to $\calD L$, i.e.
		\begin{equation}
			\label{eq:flat}
			\nabla_{\ldsb e_1,e_2\rdsb}=[\nabla_{e_1},\nabla_{e_2}],\quad\text{for all $e_1,e_2\in\Gamma(E)$}
		\end{equation}
		\item
		\label{enumitem:prop:Courant-Jacobi_algebroid:Leibniz}
		the Loday bracket $\ldsb-,-\rdsb$ satisfies the following Leibniz rule
	\begin{equation}
		\label{eq:Leibniz}
		\ldsb e_1,fe_2\rdsb=(\sigma(\nabla_{e_1})f)e_2+f\ldsb e_1,e_2\rdsb,\quad\text{for all $e_1,e_2\in\Gamma(E)$ and $f\in C^\infty(M)$}.
	\end{equation}
	In other words, the Loday algebra derivation $Z_e:=\ldsb e,-\rdsb:\Gamma(E)\to\Gamma(E)$ is also a derivation of the vector bundle $E$, i.e.~$Z_e\in\calD E$, and $\sigma(Z_e)=\sigma(\nabla_e)$, for any $e\in\Gamma(E)$
	\end{enumerate}
\end{proposition}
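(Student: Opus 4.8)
The plan is to deduce both assertions from the single bilinear identity obtained by polarizing \eqref{eq:def:Courant-Jacobi_algebroids:nabla}, and then to feed this into the Jacobi identity of the Loday bracket together with the non-degeneracy of $\ldab-,-\rdab$. First I would polarize: for fixed $e_1$, the assignment $e_2\mapsto\nabla_{e_1}\ldab e_2,e_2\rdab$ is a quadratic map $\Gamma(E)\to\Gamma(L)$ over $\bbR$ whose associated symmetric $\bbR$-bilinear map is $(e_2,e_3)\mapsto\nabla_{e_1}\ldab e_2,e_3\rdab$ (using that $\ldab-,-\rdab$ is $C^\infty(M)$-bilinear and symmetric and that $\nabla_{e_1}$ is $\bbR$-linear on $\Gamma(L)$), while the right-hand side of \eqref{eq:def:Courant-Jacobi_algebroids:nabla} polarizes to $(e_2,e_3)\mapsto\ldab\ldsb e_1,e_2\rdsb,e_3\rdab+\ldab\ldsb e_1,e_3\rdsb,e_2\rdab$. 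Hence
\begin{equation*}
	\nabla_{e_1}\ldab e_2,e_3\rdab=\ldab\ldsb e_1,e_2\rdsb,e_3\rdab+\ldab\ldsb e_1,e_3\rdsb,e_2\rdab,\qquad\text{for all }e_1,e_2,e_3\in\Gamma(E),
\end{equation*}
an identity I will call $(\ast)$ below. (A companion identity obtained by polarizing \eqref{eq:def:Courant-Jacobi_algebroids:product} is also available but, in the present approach, will not be needed.) Informally, $(\ast)$ expresses that for each $e\in\Gamma(E)$ the pair formed by the derivation $\nabla_e\in\calD L$ of $L$ and the $\bbR$-linear operator $Z_e:=\ldsb e,-\rdsb$ on $\Gamma(E)$ is a ``derivation of the pairing'' $\ldab-,-\rdab$.

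To prove \ref{enumitem:prop:Courant-Jacobi_algebroid:Leibniz}, I would substitute $fe_2$ for $e_2$ in $(\ast)$. Expanding the left-hand side with $C^\infty(M)$-bilinearity of $\ldab-,-\rdab$ and the symbol rule for $\nabla_{e_1}\in\calD L$, rewriting $\nabla_{e_1}\ldab e_2,e_3\rdab$ via $(\ast)$ once more, and cancelling the terms common to both sides, one is left with
\begin{equation*}
	\ldab\ldsb e_1,fe_2\rdsb-f\ldsb e_1,e_2\rdsb-(\sigma(\nabla_{e_1})f)e_2,\,e_3\rdab=0\qquad\text{for all }e_3\in\Gamma(E),
\end{equation*}
so that \eqref{eq:Leibniz} follows from the non-degeneracy of $\ldab-,-\rdab$. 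Reading \eqref{eq:Leibniz} as $Z_e(fv)=(\sigma(\nabla_e)f)v+fZ_e(v)$ then shows immediately that $Z_e$ is a derivation of the vector bundle $E$ with $\sigma(Z_e)=\sigma(\nabla_e)$.

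For \ref{enumitem:prop:Courant-Jacobi_algebroid:flat}, observe first that $\nabla_{\ldsb e_1,e_2\rdsb}$ and $[\nabla_{e_1},\nabla_{e_2}]$ are both derivations of $L$, the latter since the commutator of two derivations of $L$ is again a derivation of $L$. I would evaluate $[\nabla_{e_1},\nabla_{e_2}]$ on a section of the form $\ldab e_3,e_4\rdab$, expanding each occurrence of $\nabla_{e_i}$ by $(\ast)$; after the evident cancellations the outcome is $\ldab(Z_{e_1}Z_{e_2}-Z_{e_2}Z_{e_1})e_3,e_4\rdab+\ldab e_3,(Z_{e_1}Z_{e_2}-Z_{e_2}Z_{e_1})e_4\rdab$, and the Jacobi identity \eqref{eq:def:Courant-Jacobi_algebroids:Jacobi_identity}, which in Leibniz form is exactly $Z_{e_1}Z_{e_2}-Z_{e_2}Z_{e_1}=Z_{\ldsb e_1,e_2\rdsb}$, together with one further use of $(\ast)$, gives
\begin{equation*}
	[\nabla_{e_1},\nabla_{e_2}]\ldab e_3,e_4\rdab=\nabla_{\ldsb e_1,e_2\rdsb}\ldab e_3,e_4\rdab\qquad\text{for all }e_3,e_4\in\Gamma(E).
\end{equation*}
The step I expect to be the main obstacle is to upgrade this from an identity of $L$-valued functions to the identity $[\nabla_{e_1},\nabla_{e_2}]=\nabla_{\ldsb e_1,e_2\rdsb}$ of operators in $\calD L$: since derivations of $L$ are not $C^\infty(M)$-linear, their coincidence on the family $\{\ldab e_3,e_4\rdab\}$ is not by itself conclusive. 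I would handle this by substituting $fe_4$ for $e_4$ in the last display; since both sides are derivations of $L$ and their values on $\ldab e_3,e_4\rdab$ already agree, one extracts $\bigl(\sigma([\nabla_{e_1},\nabla_{e_2}])f-\sigma(\nabla_{\ldsb e_1,e_2\rdsb})f\bigr)\ldab e_3,e_4\rdab=0$, and because non-degeneracy of $\ldab-,-\rdab$ makes the functions $\ldab e_3,e_4\rdab$ include a local frame of $L$ around each point of $M$, this yields $\sigma([\nabla_{e_1},\nabla_{e_2}])=\sigma(\nabla_{\ldsb e_1,e_2\rdsb})$. Consequently $[\nabla_{e_1},\nabla_{e_2}]-\nabla_{\ldsb e_1,e_2\rdsb}\in\Gamma(\End L)$ and it annihilates a local frame of $L$, so it vanishes locally and hence everywhere, establishing \eqref{eq:flat}. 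Aside from this one careful passage, the argument is just polarization and bookkeeping once $(\ast)$ is in hand.
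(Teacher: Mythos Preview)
Your proof is correct. The paper does not give its own argument here: it simply refers to \cite[Section~12]{Grabowski2013gradedcontactCourant} for the proof, so there is no in-text proof to compare against. That said, the polarized identity $(\ast)$ you derive is exactly the one the paper records later (in the proof of Proposition~\ref{prop:Courant_tensor}) as the ``rephrasing'' of \eqref{eq:def:Courant-Jacobi_algebroids:nabla}, so your starting point is fully aligned with the paper's conventions.

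One small simplification in your treatment of \ref{enumitem:prop:Courant-Jacobi_algebroid:flat}: once you have established
\[
[\nabla_{e_1},\nabla_{e_2}]\ldab e_3,e_4\rdab=\nabla_{\ldsb e_1,e_2\rdsb}\ldab e_3,e_4\rdab\quad\text{for all }e_3,e_4\in\Gamma(E),
\]
you do not need the separate symbol extraction. Since this display is valid for \emph{all} $e_4$, in particular for $fe_4$, and since $\ldab e_3,fe_4\rdab=f\ldab e_3,e_4\rdab$, you already know the two derivations agree on every section of the form $f\ldab e_3,e_4\rdab$. By non-degeneracy $\ldab e_3,e_4\rdab$ runs over a local frame of $L$, so the two derivations agree on all of $\Gamma(L)$ directly. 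Your two-step argument is of course also valid; this just shortens it.
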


\begin{remark}
	By Proposition~\ref{prop:Courant-Jacobi_algebroid}\ref{enumitem:prop:Courant-Jacobi_algebroid:flat}, if $L=\bbR_M$, then Definition~\ref{def:Courant-Jacobi_algebroid} reduces to Grabowski and Marmo's notion of Courant--Jacobi structure~\cite[Def.~1]{grabowski2002graded} and to the equivalent notion of generalized Courant algebroid by Nunes da Costa and Clemente-Gallardo~\cite[Def.~3.4]{nunesdacosta2004dirac}.
\end{remark}

\begin{remark}
	\label{rem:Courant_algebroid}
	Courant--Jacobi algebroids generalizes the notion of Courant algebroids.
	Indeed, each Courant algebroid $E$, with structure maps $(\ldab-,-\rdab,\ldsb-,-\rdsb,\rho)$ identifies with the Courant--Jacobi algebroid $(E;\bbR_M)$, with structure maps $(\ldab-,-\rdab,\ldsb-,-\rdsb,\nabla)$, where $\nabla=\rho\colon E\to TM\subset D\bbR_M$.
\end{remark}

\begin{remark}
	In a Courant--Jacobi algebroid $(E;L)$, for any subbundle $V\subset E$, we denote by $V^\perp$ its orthogonal wrt $\ldab-,-\rdab$.
	The subbundle $V$ is \emph{isotropic} (resp.~\emph{coisotropic}) if $V\subset V^\perp$ (resp.~$V^\perp\subset V$) and so, in particular, $\rank V\leq\frac{1}{2}\rank E$ (resp.~$\rank V\geq\frac{1}{2}\rank E$).
	Moreover, $V$ is \emph{Lagrangian} if $V=V^\perp$ and so, in particular, $\rank V=\frac{1}{2}\rank E$.
\end{remark}


\begin{example}
	\label{ex:Jacobi_bialgebroid}
	Let $(A,A^\dagger;L)$ be a Jacobi bialgebroid.
	Denote by $[-,-]_A$ and $\nabla^A$ (resp.~$[-,-]_{A^\dagger}$ and $\nabla^{A^\dagger}$) the structural operations of the Jacobi algebroid $(A;L)$ (resp.~$(A^\dagger;L)$).
	Then the \emph{double of $(A,A^\dagger;L)$} is the Courant--Jacobi algebroid $(A\oplus A^\dagger;L)$ whose structural operations are the following:
	\begin{itemize}
		\item the non-degenerate $L$-valued symmetric product $\ldab-,-\rdab$ on $A\oplus A^\dagger$ is given by
		\begin{equation*}
			\ldab u+\alpha,v+\beta\rdab=\alpha(v)+\beta(u),
		\end{equation*}
		\item the Loday bracket $\ldsb-,-\rdsb$ on $\Gamma(A\oplus A^\dagger)=\Gamma(A)\oplus\Gamma(A^\dagger)$ is given by
		\begin{align*}
			\ldsb u+\alpha,v+\beta\rdsb=([u,v]_A-\iota_\beta\rmd_{A^\dagger,L}u+\scrL_\alpha v+)+(\scrL_u \beta-\iota_v\rmd_{A,L}\alpha+[\alpha,\beta]_{A^\dagger}),
		\end{align*}
		\item the VB morphism $\nabla\colon A\oplus A^\dagger\to DL$ is given by $			\nabla_{u+\alpha}=\nabla^A_u+\nabla^{A^\dagger}_\alpha$, for all $u+\alpha\in\Gamma(A\oplus A^\dagger)$.
	\end{itemize}
	In view of Remark~\ref{rem:Jacobi_algebroid|Jacobi_bialgebroids}, this construction produces, in particular, the double of a Jacobi algebroid $(A;L)$.
	For instance, if applied to $(DL;L)$, i.e.~the Jacobi algebroid formed by the gauge algebroid with its tautological representation (cf.~Example~\ref{ex:tautological_representation_DL}), one gets that the double of $(DL;L)$ is nothing but the \emph{omni-Lie algebroid $\bbD L=DL\oplus J^1L$} of the line bundle $L\to M$.
	The omni-Lie algebroid was introduced by Chen and Liu~\cite{Chen2010omniLie} as a generalization of the Courant algebroid $\calE^1(M)$ by Wade~\cite{wade2000conformal}.
\end{example}

\subsection{Dirac--Jacobi Structures}
\label{sec:DJ_structures}

Pursuing the line bundle approach to Dirac--Jacobi geometry, we introduce here the notion of Courant--Jacobi algebroid which slightly generalizes the similar notion earlier introduced by Grabowski and Marmo~\cite[Section~4]{grabowski2002graded} (see also~\cite{nunesdacosta2004dirac}).

\begin{definition}
	\label{def:Dirac-Jacobi_structure}
	Let $(E;L)$ be a Courant--Jacobi algebroid.
	A \emph{Dirac--Jacobi structure in $(E;L)$}, also called an \emph{$(E;L)$-Dirac--Jacobi structure on $M$}, is a subbundle $A\subset E$ such that
	\begin{itemize}
		\item it is \emph{Lagrangian} wrt the product, i.e.~$A=A^\perp$, and
		\item it is \emph{involutive} wrt the Loday bracket, i.e.~$\ldsb\Gamma(A),\Gamma(A)\rdsb\subset\Gamma(A)$.
	\end{itemize}
	Removing involutivity, a plain Lagrangian subbundle $A\subset E$ is called an \emph{almost Dirac--Jacobi structure}.
\end{definition}

\begin{remark}
	Assume that the underlying line bundle is the trivial one, i.e.~$L=\bbR_M$.
	Then Definition~\ref{def:Dirac-Jacobi_structure} reduces to Grabowski and Marmo's notion of a Dirac--Jacobi structure~\cite[Section~4]{grabowski2002graded} (see also~\cite[Definition 3.8.]{nunesdacosta2004dirac} where the authors consider the same notion in a generalized Lie bialgebroid).
\end{remark}

\begin{remark}
	\label{rem:Dirac_structures}
	Dirac--Jacobi structures can be seen as a generalization of Dirac structures.
	Indeed, we have the following.
	Let $E$ be a Courant algebroid and let $A\subset E$ be a vector subbundle.
	Then $A$ is an (almost) Dirac structure in the Courant algebroid $E$ if and only if $A$ is an (almost) Dirac--Jacobi structure in the Courant--Jacobi algebroid $(E,\bbR_M)$ corresponding to $E$ by Remark~\ref{rem:Courant_algebroid}.
	\end{remark}

A measure of how much an almost Dirac--Jacobi structure fails to be involutive is provided by its associated Courant--Jacobi tensor as we recall below.
\begin{proposition}[{\cite[Remark~4.5]{vitagliano2018dirac}}]
	\label{prop:Courant_tensor}
	Let $(E;L)$ be a Courant--Jacobi algebroid.
	For each almost Dirac--Jacobi structure $A\subset E$, its \emph{Courant--Jacobi tensor} $\Upsilon_A\in\Gamma(\wedge^3 A^\ast\otimes L)$ is well-defined by
	\begin{equation}
		\label{eq:prop:Courant_tensor}
		\Upsilon_A(u_1,u_2,u_3)=\ldab\ldsb u_1,u_2\rdsb, u_3\rdab,
	\end{equation}
	for all $u_1,u_2,u_3\in\Gamma(A)$, and measures how much $A$ fails to be involutive.
	Indeed, one gets that
	\begin{center}
		$A$ is Dirac--Jacobi $\Longleftrightarrow$ $A$ is involutive $\Longleftrightarrow$ $\Upsilon_A=0$.
	\end{center}
\end{proposition}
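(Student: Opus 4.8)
The statement to prove is Proposition~\ref{prop:Courant_tensor}: for an almost Dirac--Jacobi structure $A\subset E$, the formula $\Upsilon_A(u_1,u_2,u_3)=\ldab\ldsb u_1,u_2\rdsb,u_3\rdab$ defines a tensor in $\Gamma(\wedge^3A^\ast\otimes L)$, and $A$ is Dirac--Jacobi iff $\Upsilon_A=0$. The plan has three parts: (i) $C^\infty(M)$-linearity in each slot, (ii) total skew-symmetry, and (iii) the equivalence with involutivity.

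For part (i), fix $u_1,u_2,u_3\in\Gamma(A)$ and $f\in C^\infty(M)$. Linearity in the third slot is immediate from the $C^\infty(M)$-bilinearity of $\ldab-,-\rdab$. For the first and second slots I would use the Leibniz rule~\eqref{eq:Leibniz} together with the compatibility axiom~\eqref{eq:def:Courant-Jacobi_algebroids:product} of Definition~\ref{def:Courant-Jacobi_algebroid}. Concretely, $\ldsb u_1,fu_2\rdsb=(\sigma(\nabla_{u_1})f)u_2+f\ldsb u_1,u_2\rdsb$, so pairing against $u_3$ gives $\ldab\ldsb u_1,fu_2\rdsb,u_3\rdab=(\sigma(\nabla_{u_1})f)\ldab u_2,u_3\rdab+f\ldab\ldsb u_1,u_2\rdsb,u_3\rdab$; but $\ldab u_2,u_3\rdab=0$ since $A$ is Lagrangian (hence isotropic), so the first term drops and we get $C^\infty(M)$-linearity in the second slot. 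Linearity in the first slot then follows once skew-symmetry between slots~1 and~2 is established (or independently via the analogous identities). So I would sequence: prove isotropy kills the anomalous terms, prove linearity in slot~2, then get slots~1 and~3 from symmetry.

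For part (ii), total skew-symmetry, the key inputs are the two symmetry axioms of a Courant--Jacobi algebroid. Polarizing~\eqref{eq:def:Courant-Jacobi_algebroids:product} gives $\ldab\ldsb e_1,e_2\rdsb,e_3\rdab+\ldab\ldsb e_1,e_3\rdsb,e_2\rdab=\ldab e_1,\ldsb e_2,e_3\rdsb+\ldsb e_3,e_2\rdsb\rdab$, and since $A$ is isotropic the right-hand side vanishes when all arguments lie in $A$ (the combination $\ldsb e_2,e_3\rdsb+\ldsb e_3,e_2\rdsb$ pairs against $e_1$, and... actually one must be careful: $\ldsb e_2,e_3\rdsb+\ldsb e_3,e_2\rdsb$ need not lie in $A$ a priori). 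A cleaner route: skew-symmetry in slots~2 and~3 of $\Upsilon_A$ is exactly the content of the polarized axiom~\eqref{eq:def:Courant-Jacobi_algebroids:product} combined with the fact that $\ldsb e_2,e_3\rdsb+\ldsb e_3,e_2\rdsb$ is (by a standard Courant-type identity, using~\eqref{eq:def:Courant-Jacobi_algebroids:nabla}) of the form $\nabla^\ast\ldab e_2,e_3\rdab$-type, i.e.\ proportional to $\ldab e_2,e_3\rdab$ which vanishes on $A$. For skew-symmetry in slots~1 and~2, I would use the Jacobi identity~\eqref{eq:def:Courant-Jacobi_algebroids:Jacobi_identity} in Leibniz form together with~\eqref{eq:def:Courant-Jacobi_algebroids:product}; this is the standard computation showing the Courant tensor of an isotropic subbundle is skew. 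Once two adjacent transpositions act by $-1$, full skew-symmetry on $\wedge^3A^\ast\otimes L$ follows.

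For part (iii): if $A$ is involutive then $\ldsb u_1,u_2\rdsb\in\Gamma(A)$, so $\ldab\ldsb u_1,u_2\rdsb,u_3\rdab=0$ by isotropy, giving $\Upsilon_A=0$. Conversely, if $\Upsilon_A=0$, then for all $u_1,u_2\in\Gamma(A)$ the element $\ldsb u_1,u_2\rdsb$ is orthogonal to all of $\Gamma(A)$, hence lies in $A^\perp=A$ because $A$ is Lagrangian; thus $A$ is involutive. This last direction is where the Lagrangian (not merely isotropic) hypothesis is essential, so I would flag that explicitly. \textbf{Expected main obstacle:} the tensoriality and skew-symmetry in the first two slots, where one cannot simply invoke isotropy but must combine the Loday--Jacobi identity~\eqref{eq:def:Courant-Jacobi_algebroids:Jacobi_identity}, the Leibniz rule~\eqref{eq:Leibniz}, and both compatibility axioms~\eqref{eq:def:Courant-Jacobi_algebroids:product}--\eqref{eq:def:Courant-Jacobi_algebroids:nabla}; getting the signs and the "$\ldsb-,-\rdsb$ symmetrization $=$ $\nabla$ of the pairing" identity right is the delicate point. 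Since this proposition is attributed to \cite[Remark~4.5]{vitagliano2018dirac}, I would also simply cite that reference and keep the reproduced argument brief.
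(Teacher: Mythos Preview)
Your proposal is correct and follows essentially the same approach as the paper: use the compatibility axioms~\eqref{eq:def:Courant-Jacobi_algebroids:product}--\eqref{eq:def:Courant-Jacobi_algebroids:nabla} together with isotropy of $A$ for tensoriality and skew-symmetry, then the Lagrangian condition $A=A^\perp$ for the equivalence with involutivity. The paper's proof is just a terser packaging of the same ideas: it records the polarized forms
\[
\nabla_e\ldab e_1,e_2\rdab=\ldab\ldsb e,e_1\rdsb,e_2\rdab+\ldab e_1,\ldsb e,e_2\rdsb\rdab
\quad\text{and}\quad
\nabla_e\ldab e_1,e_2\rdab=\ldab e,\ldsb e_1,e_2\rdsb+\ldsb e_2,e_1\rdsb\rdab,
\]
and observes that on sections of $A$ both left-hand sides vanish by isotropy, giving directly the antisymmetry under $(2\leftrightarrow 3)$ and $(1\leftrightarrow 2)$.

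One streamlining worth noting: you do \emph{not} need the Jacobi identity~\eqref{eq:def:Courant-Jacobi_algebroids:Jacobi_identity} for skew-symmetry in slots $1$ and $2$. The second displayed identity above (which is exactly the ``symmetrization $=\nabla$ of the pairing'' identity you were reaching for) already yields $\Upsilon_A(u_1,u_2,u_3)+\Upsilon_A(u_2,u_1,u_3)=\nabla_{u_3}\ldab u_1,u_2\rdab=0$. So the ``expected main obstacle'' you flag dissolves once you write down both polarized identities; the Jacobi identity plays no role in this proposition.
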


\begin{proof}
	In Definition~\ref{def:Courant-Jacobi_algebroid}, the identities~\eqref{eq:def:Courant-Jacobi_algebroids:product} and~\eqref{eq:def:Courant-Jacobi_algebroids:nabla} can be rephrased as
	\begin{equation}
		\label{eq:proof:prop:Courant_tensor}
		\nabla_e\ldab e_1,e_2\rdab=\ldab\ldsb e,e_1\rdsb,e_2\rdab+\ldab e_1,\ldsb e,e_2\rdsb\rdab\quad\text{and}\quad\nabla_e\ldab e_1,e_2\rdab=\ldab e,\ldsb e_1,e_2\rdsb+\ldsb e_2,e_1\rdsb\rdab 
	\end{equation}
	for all $e,e_1,e_2\in\Gamma(E)$.
	Since $A$ is Lagrangian, the latter imply that the RHS of Equation~\eqref{eq:prop:Courant_tensor} is skew-symmetric and $C^\infty(M)$ linear in $u_1,u_2$ and $u_3\in\Gamma(A)$.
	The rest of the proof is straightforward.
\end{proof}

\begin{example}
	\label{ex:DJ:omniLie}
	We show some special classes of (almost) Dirac--Jacobi structures in the omni-Lie algebroid $\bbD L:=DL\oplus J^1L$ of a line bundle $L\to M$ (see Example~\ref{ex:Jacobi_bialgebroid}).
	\begin{enumerate}[label=(\arabic*)]
		\item
		\label{enum:ex:DJ:omniLie:1}
		$L$-valued Atiyah $2$-forms identify with almost Dirac--Jacobi structures transverse to $J^1L$
		\begin{equation*}
		\begin{aligned}
		\Omega^2_D(L)&\overset{\sim}{\longrightarrow}\{A\subset\bbD L\ \text{almost Dirac--Jacobi structure}\mid A\pitchfork J^1 L\},\\
		\varpi&\longmapsto\gr(\varpi):=\{\delta+\iota_\delta\varpi \mid \delta\in DL\}.
		\end{aligned}
		\end{equation*}
		so that $\varpi\in\Omega^2_D(L)$ is closed if and only if $\gr(\varpi)\subset\bbD L$ is Dirac--Jacobi.
		\item
		\label{enum:ex:DJ:omniLie:2}
		Bi-derivations of $L$ identify with almost Dirac--Jacobi structures transverse to $DL$
		\begin{equation*}
		\begin{aligned}
		\calD^2L&\overset{\sim}{\longrightarrow}\{ A\subset\bbD L\ \text{almost Dirac--Jacobi structure}\mid A\pitchfork DL\},\\
		J&\longmapsto\gr(J):=\{\iota_\alpha J+\alpha\mid\alpha\in J^1L\}.
		\end{aligned}
		\end{equation*}
		so that $J\in\calD^2 L$ is Jacobi if and only if $\gr(J)\subset\bbD L$ is Dirac--Jacobi.
		\item
		\label{enum:ex:DJ:omniLie:3}
		subbundles of $DL$ identify with almost Dirac--Jacobi structures in $\bbD L$ s.t.~$ A=\pr_D A{{}\oplus{}}{\pr_J A}$,
		\begin{equation*}
		\begin{aligned}
		\{\text{subbundles of $DL$}\}&\overset{\sim}{\longrightarrow}\{ A\subset\bbD L\ \text{almost Dirac--Jacobi structure}\mid A=\pr_D A{{}\oplus{}}{\pr_J A}\},\\
		D&\longmapsto D\oplus D^\circ.
		\end{aligned}
		\end{equation*}
		so that $D\subset DL$ is involutive if and only if $D\oplus D^\circ\subset\bbD L$ is Dirac--Jacobi.
		Above, $\pr_D:\bbD L\to DL$ and $\pr_J:\bbD L\to J^1L$ denote the standard projections. 
	\end{enumerate}
\end{example}

Because of the involutivity condition, each Dirac--Jacobi structure naturally becomes a Jacobi algebroid as recalled in the following.

\begin{proposition}
	\label{prop:almost_DJ_structure|Jacobi_algebroid}
	Let $A$ be an almost Dirac--Jacobi structure in a Courant--Jacobi algebroid $(E;L)$.
	\begin{enumerate}[label=\arabic*)]
		\item Each almost Dirac--Jacobi structure $B$ in $(E;L)$ which is transverse to $A$ determines an almost Jacobi algebroid structure on $(A;L)$ with structure maps (depending on $B$) given by
		\begin{equation*}
			[-,-]_A=\left.\pr_A\circ\ldsb-,-\rdsb\right|_{\Gamma(A)\times\Gamma(A)},\qquad\rho^A=\sigma\circ\nabla|_A,\qquad\nabla^A=\nabla|_A,
		\end{equation*}
		where $\pr_A:E\to A$ denotes the projection with respect to the splitting $E=A\oplus B$.
		\item If $A$ is a Dirac--Jacobi structure, then $(A;L)$ becomes a Jacobi algebroid with structure maps
		\begin{equation*}
			[-,-]_A=\left.\ldsb-,-\rdsb\right|_{\Gamma(A)\times\Gamma(A)},\qquad\rho^A=\sigma\circ\nabla|_A,\qquad\nabla^A=\nabla|_A.
		\end{equation*}
	\end{enumerate}
\end{proposition}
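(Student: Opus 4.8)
The plan is to verify directly that the proposed structure maps satisfy the axioms of an (almost) Jacobi algebroid (Definitions~\ref{def:Jacobi algebroid} and~\ref{def:almost_Jacobi_algebroid}), making systematic use of the identities available in a Courant--Jacobi algebroid, namely~\eqref{eq:def:Courant-Jacobi_algebroids:Jacobi_identity},~\eqref{eq:Leibniz},~\eqref{eq:flat}, and the reformulated identities~\eqref{eq:proof:prop:Courant_tensor} from the proof of Proposition~\ref{prop:Courant_tensor}. The two items are handled together: item~2) is the special case of item~1) in which one may take $B$ to be any transverse almost Dirac--Jacobi complement, since involutivity of $A$ means $\ldsb\Gamma(A),\Gamma(A)\rdsb\subset\Gamma(A)$ and hence $\pr_A\circ\ldsb-,-\rdsb|_{\Gamma(A)\times\Gamma(A)}=\ldsb-,-\rdsb|_{\Gamma(A)\times\Gamma(A)}$ is independent of $B$ and, moreover, the Jacobi identity for $\ldsb-,-\rdsb$ then restricts verbatim to give the Jacobi identity for $[-,-]_A$; likewise flatness of $\nabla^A$ follows by restricting~\eqref{eq:flat}.

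First I would establish, for item~1), that $[-,-]_A$ is well-defined and skew-symmetric: since $A$ is Lagrangian, for $u_1,u_2\in\Gamma(A)$ the second identity in~\eqref{eq:proof:prop:Courant_tensor} with $e=u_3\in\Gamma(A)$ gives $\ldab u_3,\ldsb u_1,u_2\rdsb+\ldsb u_2,u_1\rdsb\rdab=\nabla_{u_3}\ldab u_1,u_2\rdab=0$, so $\ldsb u_1,u_2\rdsb+\ldsb u_2,u_1\rdsb\in A^\perp=A$; applying $\pr_A$ we get $[u_1,u_2]_A+[u_2,u_1]_A=\ldsb u_1,u_2\rdsb+\ldsb u_2,u_1\rdsb$, and we must still argue this equals the common symmetrization, which I will extract from the first identity in~\eqref{eq:proof:prop:Courant_tensor} restricted to $A$ — here the precise bookkeeping of $\pr_A$ against the non-skew bracket is the one point requiring care. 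Next, the Leibniz rule $[u,fv]_A=(\sigma(\nabla_u)f)v+f[u,v]_A$ follows by applying $\pr_A$ to~\eqref{eq:Leibniz} and using that $\pr_A$ is $C^\infty(M)$-linear and that $\sigma(\nabla_u)$ depends only on $u$; combined with $\rho^A=\sigma\circ\nabla^A$ this is exactly the almost Jacobi algebroid compatibility. That $\nabla^A=\nabla|_A$ is a genuine VB morphism $A\to DL$ is immediate since $\nabla$ is.

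For item~2), once involutivity is invoked the remaining content is: (i) the Jacobi identity for $[-,-]_A$, obtained by restricting~\eqref{eq:def:Courant-Jacobi_algebroids:Jacobi_identity} to $\Gamma(A)$ and noting all terms land in $\Gamma(A)$; (ii) $[-,-]_A$ is genuinely skew-symmetric on $\Gamma(A)$, which now follows from the well-definedness step above together with the fact that the symmetric part $\ldsb u,v\rdsb+\ldsb v,u\rdsb$ is $\nabla$-exact and pairs trivially against all of $A$, hence lies in $A^\perp=A$ and is in fact $\rmd_{A,L}$-type coboundary data — more simply, skew-symmetry of $\Upsilon_A=0$ forces the bracket to be skew; and (iii) flatness $\nabla^A_{[u,v]_A}=[\nabla^A_u,\nabla^A_v]$, which is~\eqref{eq:flat} restricted to $\Gamma(A)$ using $\ldsb u,v\rdsb=[u,v]_A$. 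The main obstacle I anticipate is the careful verification of skew-symmetry and well-definedness of $[-,-]_A$ in item~1) when $A$ is only \emph{almost} Dirac--Jacobi: one genuinely needs both identities in~\eqref{eq:proof:prop:Courant_tensor} and the fact that the \emph{antisymmetrization} of a Loday bracket on a Lagrangian subbundle is forced to be closed into $A$; everything else is a routine restriction-and-project argument that I would state crisply rather than expand.
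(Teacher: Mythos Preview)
The paper states this proposition without proof, so there is nothing to compare against; your restrict-and-project strategy is the natural one and is correct in outline. The Leibniz rule, the fact that $\nabla^A$ is a VB morphism, and all of item~2) go through exactly as you say.

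There is, however, a genuine gap in your skew-symmetry argument for item~1). You apply the second identity in~\eqref{eq:proof:prop:Courant_tensor} only with $e=u_3\in\Gamma(A)$ and conclude that $\ldsb u_1,u_2\rdsb+\ldsb u_2,u_1\rdsb\in A^\perp=A$; from there you get stuck, and the vague reference to ``the common symmetrization'' and to extracting something from the first identity does not lead anywhere. The fix is simpler than what you attempt: in that second identity,
\[
\nabla_e\ldab u_1,u_2\rdab=\ldab e,\ldsb u_1,u_2\rdsb+\ldsb u_2,u_1\rdsb\rdab,
\]
the section $e$ is allowed to range over all of $\Gamma(E)$, not just $\Gamma(A)$. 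Since $\ldab u_1,u_2\rdab=0$ (because $A$ is isotropic), the left-hand side vanishes for every $e\in\Gamma(E)$, and non-degeneracy of $\ldab-,-\rdab$ gives $\ldsb u_1,u_2\rdsb+\ldsb u_2,u_1\rdsb=0$. So the Loday bracket is already skew-symmetric on $\Gamma(A)$, and hence so is its projection $[-,-]_A=\pr_A\circ\ldsb-,-\rdsb$. This also cleans up your item~2) discussion of skew-symmetry (the remark ``$\Upsilon_A=0$ forces the bracket to be skew'' is not correct and is not needed).
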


\section{On the Contact Geometry of the Shifted First Jet Bundle of a Line Bundle}
\label{sec:degree_2_NQ-manifolds}

In developing the deformation theory of Dirac--Jacobi structure within a fixed Courant--Jacobi algebroid, an important ingredient is the identification~\cite{Grabowski2013gradedcontactCourant} of Courant--Jacobi algebroids with degree $2$ contact $\bbN Q$ manifolds (or equivalently with degree $2$ symplectic $\bbN Q$ manifolds that are homogeneous w.r.t.~an additional principal $\bbR^\times$ bundle structure).
Precisely, we will need to spell out this identification in the case of a split Courant--Jacobi algebroid (see Section~\ref{sec:splitCJalgbd:MC_elements}).
As a preliminary, in this section, we describe in detail the canonical contact structure of the shifted first jet bundle $J^1[2]L_\calQ$ of a line bundle generated in degree $0$ over an $\bbN$ manifold $\calQ$ (see Proposition~\ref{prop:Cartan_contact_structure}).
Actually, for the aims of the paper, from Section~\ref{sec:shifted_1st_jet_bundle_N-mfld:degree_1} on we will focus our attention only on the case of an $\bbN$ manifold of degree $1$, i.e.~$\calQ=A[1]$, for some vector bundle $A\to M$.
In particular, in this setting, Theorem~\ref{theor:contact_Legendre} constructs a contact version of the Legendre transform.

\subsection{The Canonical Contact Structure of the First Jet Bundle}
\label{sec:shifted_1st_jet_bundle_N-mfld}

Given a line bundle $L_\calQ\to\calQ$ generated in degree $0$ over an $\bbN$-manifold $\calQ$, the shifted total space $J^1[2]L_\calQ$ of the first jet bundle $J^1L_\calQ\overset{\pi}{\to}\calQ$ is endowed with a canonical contact structure, called the \emph{Cartan contact structure}.
Below we recall its construction.

Let $z^\alpha$ be a local coordinate chart for $\calQ$ and $\mu$ be a local frame of $L_\calQ\to\calQ$.
Then local fiberwise linear coordinates on $J^1[2]L_\calQ$ are given by the associated momenta $p,p_\alpha$, with $|p|=2$ and $|p_\alpha|=-|z_\alpha|+2$.
For each index $\alpha$ define the local vector field $D_\alpha$ on $J^1[2]L_\calQ$ by setting
\begin{equation*}
D_\alpha:=\frac{\partial}{\partial z^\alpha}+p_\alpha\frac{\partial}{\partial p}.
\end{equation*}
Then the vector fields $\frac{\partial}{\partial p},\ \frac{\partial}{\partial p_\alpha},\ D_\alpha$ form a local frame on $J^1[2]L_\calQ$, with degrees $|\frac{\partial}{\partial p}|=-2$, $|\frac{\partial}{\partial p_\alpha}|=|z^\alpha|-2$ and $|D_\alpha|=-|z^\alpha|$, and whose only non-trivial commutation relations are the following
\begin{equation}
\label{eq:commutation_relations}
[D_\alpha,\frac{\partial}{\partial p_\beta}]=-(-)^{|z^\alpha|}[\frac{\partial}{\partial p_\beta},D_\alpha]=-(-)^{|z^\alpha|}\delta^\alpha_\beta\frac{\partial}{\partial p}.
\end{equation}
As dual local coframe one gets $\rmd p-p_\alpha\rmd z^\alpha,\ \rmd p_\alpha,\ \rmd z^\alpha$, with degrees $|\rmd p-p_\alpha\rmd z^\alpha|=3$, $|\rmd p_\alpha|=3-|z^\alpha|$, $|\rmd z^\alpha|=|z^\alpha|+1$.

\begin{proposition}
	\label{prop:Cartan_contact_structure}
	On the shifted first jet bundle $J^1[2]L_\calQ$ there exists a canonical contact structure, called the \emph{Cartan contact structure}.
	It is given by the \emph{degree $3$} contact $1$-form $\theta$ on $J^1[2]L_\calQ$, with coefficients in the pull-back line bundle $\calL:=\pi^\ast L_\calQ\to J^1[2]L_\calQ$, which locally looks like
	\begin{equation}
		\label{eq:prop:Cartan_contact_form}
		\theta=(\rmd p-p_\alpha\rmd z^\alpha)\otimes\pi^\ast\mu.
	\end{equation}
	Equivalently, it is given by the contact distribution $\calH:=\ker\theta\subset T(J^1[2]L_\calQ)$ which locally looks like
	\begin{equation}
		\label{eq:prop:Cartan_contact_distribution}
	\calH=\left\langle D_\alpha:=\frac{\partial}{\partial z^\alpha}+p_\alpha\frac{\partial}{\partial p},\ \frac{\partial}{\partial p_\alpha}\right\rangle.
	\end{equation} 
\end{proposition}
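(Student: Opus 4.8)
The plan is to verify that $\theta = (\rmd p - p_\alpha \rmd z^\alpha)\otimes \pi^\ast\mu$ is a well-defined global $\calL$-valued $1$-form (independent of coordinates), and then to check the non-degeneracy condition that makes it a contact form, namely that the map $T(J^1[2]L_\calQ) \to \calL$ given by $\theta$ restricted to the kernel distribution $\calH$ controls a symplectic-type pairing. Concretely, I would proceed in three stages: (i) intrinsic description of $\theta$; (ii) coordinate independence / globality; (iii) the non-degeneracy of $\rmd_\calL\theta$ on $\calH = \ker\theta$, where $\rmd_\calL$ denotes the de Rham differential with values in the flat line bundle $\calL$ equipped with its canonical flat partial connection along the bundle projection.

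\textbf{Step 1: Intrinsic description.} First I would recall the tautological construction of $\theta$. For an ordinary line bundle $L\to N$, the first jet bundle $J^1 L \to N$ carries a tautological $L$-valued $1$-form: a point of $J^1 L$ over $x\in N$ is a pair $(\lambda_x, (\rmd\lambda)_x)$ of a value and a first-order jet, and the tautological form sends a tangent vector $\xi$ at that point to the contraction of $\pi^\ast$ of the jet datum with $T\pi(\xi)$, valued in $\pi^\ast L$. Shifting the fibers by $[2]$ does not change this construction, only the degrees. In the local frame, a point of $J^1[2]L_\calQ$ is described by $(z^\alpha, p, p_\alpha)$ where $p$ records the value (against $\mu$) and $p_\alpha$ the jet; the tautological form is exactly $(\rmd p - p_\alpha\rmd z^\alpha)\otimes\pi^\ast\mu$. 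So Step 1 is essentially unpacking the definition and matching it against \eqref{eq:prop:Cartan_contact_form}.

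\textbf{Step 2: Globality.} Under a change of local data $\tilde\mu = g\mu$ (with $g$ an invertible degree-$0$ function) and $\tilde z^\beta = \tilde z^\beta(z)$, the momenta transform by the inverse-transpose Jacobian together with a shift coming from $\rmd g$, precisely as for the canonical Liouville form on a cotangent bundle twisted by the jet correction. I would write out that $\tilde p\, \pi^\ast\tilde\mu = p\,\pi^\ast\mu$ up to the jet term and that $\rmd\tilde p - \tilde p_\beta \rmd\tilde z^\beta$, tensored with $\pi^\ast\tilde\mu$, reproduces $(\rmd p - p_\alpha\rmd z^\alpha)\otimes\pi^\ast\mu$ exactly. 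This is a routine but slightly delicate bookkeeping computation; the key point is that the ``affine'' correction $p_\alpha \rmd z^\alpha$ is precisely what cancels the $\rmd g$-ambiguity in $\rmd p$, so that $\theta$ descends. I would state this and note the degree bookkeeping ($|\theta| = 3$) follows from $|p| = 2$, $|\rmd| = 1$, and $|\pi^\ast\mu| = 0$.

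\textbf{Step 3: Non-degeneracy.} The content of ``contact structure'' is that $\calH := \ker\theta$ is a hyperplane distribution (codimension $1$ in the appropriate graded sense) on which the $\calL$-valued $2$-form $\rmd_\calL\theta$ is non-degenerate — equivalently, the bundle map $\calH \to \calH^\ast\otimes\calL$, $\xi\mapsto \iota_\xi(\rmd_\calL\theta)|_\calH$, is an isomorphism. I would compute $\rmd_\calL\theta$ in the local frame: using the canonical flat connection on $\calL = \pi^\ast L_\calQ$ along the fibers, $\rmd_\calL\theta = -\rmd p_\alpha\wedge\rmd z^\alpha \otimes\pi^\ast\mu$ (the $\pi^\ast\mu$ factor being flat in the relevant directions, so contributing no extra term). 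Then I would check against the local frame $\{\partial/\partial p,\ \partial/\partial p_\alpha,\ D_\alpha\}$: the form $\theta$ annihilates exactly $\langle D_\alpha, \partial/\partial p_\alpha\rangle$ (confirming \eqref{eq:prop:Cartan_contact_distribution}), and on this distribution $\rmd_\calL\theta$ pairs $D_\alpha$ with $\partial/\partial p_\alpha$ non-degenerately — this is exactly the commutation relation \eqref{eq:commutation_relations} read off as a symplectic pairing. The main obstacle, and the only genuinely non-formal part, is making the ``non-degeneracy in the graded setting'' precise: one must be careful that the Reeb-type vector field $\partial/\partial p$ (dual to $\theta$, with $\theta(\partial/\partial p) = \pi^\ast\mu$) splits off cleanly, and that the sign conventions in \eqref{eq:commutation_relations} are consistent with the graded antisymmetry of $\rmd_\calL\theta$. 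Once the local pairing is exhibited and seen to be the tautological symplectic pairing between base-momenta and jet-momenta (twisted by $\calL$), non-degeneracy is immediate, and the proof is complete.
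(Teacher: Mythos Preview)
Your proposal is correct and substantially more detailed than the paper's own proof, which consists of a single sentence: ``It is a straightforward computation in the local adapted coordinates $(z^\alpha,p,p_\alpha)$.'' Your Step~3 is essentially that computation, and the commutation relations~\eqref{eq:commutation_relations} displayed just before the proposition do all the work: the curvature pairing $(X,Y)\mapsto\theta([X,Y])$ on $\calH$ is non-degenerate precisely because of~\eqref{eq:commutation_relations}. Your Steps~1 and~2 --- the intrinsic tautological description of $\theta$ and the explicit coordinate-independence check --- go beyond what the paper supplies; the paper simply trusts the reader to accept that the first-jet construction is canonical.

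One small caveat on Step~3: framing non-degeneracy through an honest differential $\rmd_\calL\theta$ is awkward, since $\calL$ carries no canonical connection and the ``flat partial connection along the bundle projection'' you invoke does not quite produce a globally well-defined $\calL$-valued $2$-form. You correctly sidestep this by observing that everything reduces to~\eqref{eq:commutation_relations}; the cleanest phrasing is that the curvature of the hyperplane distribution $\calH$ --- the $\calL$-valued skew pairing $\calH\times\calH\to T(J^1[2]L_\calQ)/\calH\simeq\calL$ induced by the Lie bracket --- is non-degenerate, which is exactly the content of~\eqref{eq:commutation_relations}. This avoids any connection on $\calL$ altogether and matches the line-bundle approach to contact structures used throughout the paper.
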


\begin{proof}
	It is a straightforward computation in the local adapted coordinates $(z^\alpha,p,p_\alpha)$.
\end{proof}

In the line bundle approach to contact structures, each section of the underline line bundle determines the associated Reeb vector field.
This justifies the following.
\begin{definition}
	\label{def:Reeb_vf}
	For each homogeneous section $\lambda\in\Gamma(\calL)$, the associated (degree $|\lambda|-2$) \emph{Reeb vector field} $\calX_\lambda\in\frakX(J^1[2]L_\calQ)$ is uniquely determined by the following two conditions
	\begin{equation}
		\label{eq:def:Reeb_vf}
		\iota_{\calX_\lambda}\theta=(-)^{|\lambda|}\lambda\quad\text{and}\quad[\calX_\lambda,\Gamma(\calH)]\subset\Gamma(\calH).
	\end{equation}
\end{definition}
\begin{remark}
	\label{rem:Reeb_vf:coordinate_expression}
	Assume that locally $\lambda=f\pi^\ast\mu$, for some homogeneous function $f=f(z^\alpha,p,p_\alpha)$.
	Then, using Equations~\eqref{eq:prop:Cartan_contact_form} and the commutation relations~\eqref{eq:commutation_relations}, from Definition~\ref{def:Reeb_vf} it easily follows that
	\begin{equation}
		\label{eq:rem:Reeb_vf:coordinate_expression}
		\calX_{\lambda}=f\frac{\partial}{\partial p}+(-)^{|f||z^\alpha|}\left((D_\alpha f)\frac{\partial}{\partial p_\alpha}-(-)^{|z^\alpha|}\frac{\partial f}{\partial p_\alpha}D_\alpha\right).
	\end{equation}
\end{remark}

The identification of contact structures with non-degenerate Jacobi structures justifies the following definition of the canonical Jacobi structure on $\calL:=\pi^\ast L_\calQ\to J^1[2]L_\calQ$.
\begin{definition}
	The canonical contact structure on $J^1[2]L_\calQ$ determines the \emph{associated degree $-2$ non-degenerate Jacobi structure} $\{-,-\}$ on $\calL\to J^1[2]L_\calQ$ which is defined as follows
\begin{equation}
\label{eq:canonical_Jacobi_structure}
\{\lambda_1,\lambda_2\}=(-)^{|\lambda_1|+|\lambda_2|}\iota_{[\calX_{\lambda_1},\calX_{\lambda_2}]}\theta,
\end{equation}
for all homogeneous sections $\lambda_1,\lambda_2$ of the pull-back line bundle $\calL:=\pi^\ast L_\calQ\to J^1[2]L_\calQ$.
\end{definition}
\begin{remark}
	Assume that locally $\lambda_1=f_1\pi^\ast\mu$ and  $\lambda_2=f_2\pi^\ast\mu$, for some homogeneous functions $f_1=f_1(z^\alpha,p,p_\alpha)$ and $f_2=f_2(z^\alpha,p,p_\alpha)$.
	Now, in view of Equation~\eqref{eq:rem:Reeb_vf:coordinate_expression} and the commutation relations~\eqref{eq:commutation_relations}, one can compute
	\begin{equation*}
	[\calX_{\lambda_1},\calX_{\lambda_2}]=\left(\left(f_1\frac{\partial f_2}{\partial p}-\frac{\partial f_1}{\partial p}f_2\right)+(-)^{|f_1||z^\alpha|}\left(D_\alpha f_1\frac{\partial f_2}{\partial p_\alpha}-(-)^{|z^\alpha|}\frac{\partial f_1}{\partial p_\alpha}D_\alpha f_2\right)\right)\frac{\partial}{\partial p}+\calH.
	\end{equation*}
	Plugging the latter into Equation~\eqref{eq:canonical_Jacobi_structure} and also using of the fact that $\iota_{fX}=(-)^{|f|}\iota_X$, for all homogeneous $f\in C^\infty(J^1[2]L_\calQ)$ and $X\in\frakX(J^1[2]L_\calQ)$, one immediately gets that
	\begin{equation}
	\label{eq:canonical_Jacobi_structure:local_coordinates}
	\{\lambda_1,\lambda_2\}=\left(\left(f_1\frac{\partial f_2}{\partial p}-\frac{\partial f_1}{\partial p}f_2\right)+(-)^{|f_1||z^\alpha|}\left(D_\alpha f_1\frac{\partial f_2}{\partial p_\alpha}-(-)^{|z^\alpha|}\frac{\partial f_1}{\partial p_\alpha}D_\alpha f_2\right)\right)\otimes\pi^\ast\mu.
	\end{equation}
\end{remark}

\begin{remark}
	\label{rem:Hamiltonian_lift}
	Let us keep considering an $\bbN$-manifold $\calQ$ and a line bundle $L_\calQ\to\calQ$ generated in degree $0$.
	Fix a chart of local coordinate $z^\alpha$ on $\calQ$ and a local frame $\mu$ of $L_\calQ\to\calQ$.
	Then the local frame  $\mathbbm{1},\Delta_\alpha$ of the vector bundle $DL_\calQ\to\calQ$ is defined by setting, for each local function $f$ on $\calQ$,
	\begin{equation*}
		\mathbbm{1}(f\mu)=f\mu\quad\text{and}\quad\Delta_\alpha(f\mu)=\frac{\partial f}{\partial z^\alpha}\mu.
	\end{equation*}
	In particular, they commute and their degrees are $|\mathbbm{1}|=0$ and $|\Delta_\alpha|=-|z^\alpha|$.
	Therefore, an arbitrary derivation $\delta\in\calD L_\calQ\equiv\Gamma(DL_\calQ)$ of the line bundle $L_\calQ\to\calQ$ locally looks like
	\begin{equation*}
		\delta=f\mathbbm{1}+f^\alpha\Delta_\alpha,
	\end{equation*}
	for arbitrary local functions $f,f^\alpha$ on $\calQ$, and so it can be identified with the fiberwise-linear section $h_\delta$ of $\calL:=\pi^\ast L_\calQ\to J^1[2]L_\calQ$ locally given by $h_\delta=(fp+f^\alpha p_\alpha)\pi^\ast\mu$.
	This establishes a $C^\infty(\calQ)$-module embedding (called teh \emph{Hamiltonian lift})
	\begin{equation}
		\label{eq:LB-sections_from_derivations}
		\calD L_\calQ\longrightarrow\Gamma(\calL),\ \delta\longmapsto h_\delta.
	\end{equation}
	Similarly, each section $\lambda$ of  $L_\calQ\to\calQ$ identifies with the fiberwise constant section $\pi^\ast\lambda$ of the pull-back line bundle $\calL:=\pi^\ast L_\calQ\to J^1[2]L_\calQ$.
	This establishes a $C^\infty(\calQ)$-module embedding
	\begin{equation}
		\label{eq:LB-sections_from_pull-back}
		\Gamma(L_\calQ)\longrightarrow\Gamma(\calL),\ \lambda\longmapsto\pi^\ast\lambda.
	\end{equation}
\end{remark}

The $C^\infty(\calQ)$-module embeddings introduced in Remark~\ref{rem:Hamiltonian_lift} have the following property.

\begin{lemma}
	\label{lem:Hamiltonian_lift}
	For each $\delta,\delta^\prime\in\calD L_\calQ$ and $\lambda,\lambda^\prime\in\Gamma(L_\calQ)$, the following identities hold:
	\begin{equation*}
	\{h_\delta,h_{\delta^\prime}\}=-h_{[\delta,\delta^\prime]},\quad\{h_\delta,\pi^\ast\lambda\}=-\pi^\ast(\delta\lambda),\quad\{\pi^\ast\lambda,\pi^\ast\lambda^\prime\}=0.
	\end{equation*}
\end{lemma}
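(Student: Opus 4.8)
The plan is to verify all three identities by a direct computation in the local adapted coordinates $(z^\alpha, p, p_\alpha)$, using the coordinate expression~\eqref{eq:canonical_Jacobi_structure:local_coordinates} for the bracket $\{-,-\}$ together with the explicit form of the Hamiltonian lift $h_\delta$ and of the pull-back section $\pi^\ast\lambda$ recalled in Remark~\ref{rem:Hamiltonian_lift}. Fix a chart $z^\alpha$ on $\calQ$ and a local frame $\mu$ of $L_\calQ$, and write $\delta = f\mathbbm{1} + f^\alpha\Delta_\alpha$ and $\delta^\prime = g\mathbbm{1} + g^\alpha\Delta_\alpha$ for local functions $f,f^\alpha,g,g^\alpha$ on $\calQ$ (pulled back, so independent of the momenta $p, p_\alpha$). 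Then $h_\delta = (fp + f^\alpha p_\alpha)\pi^\ast\mu$, $h_{\delta^\prime} = (gp + g^\alpha p_\alpha)\pi^\ast\mu$, and $\pi^\ast\lambda = \ell\,\pi^\ast\mu$ with $\ell = \ell(z^\alpha)$.

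First I would handle the easiest case $\{\pi^\ast\lambda, \pi^\ast\lambda^\prime\} = 0$: since $\ell, \ell^\prime$ do not depend on $p$ or $p_\alpha$, we have $\partial\ell/\partial p = \partial\ell/\partial p_\alpha = 0$, and moreover $D_\alpha\ell = \partial\ell/\partial z^\alpha + p_\alpha\,\partial\ell/\partial p = \partial\ell/\partial z^\alpha$ but this gets contracted against $\partial\ell^\prime/\partial p_\alpha = 0$; every term of~\eqref{eq:canonical_Jacobi_structure:local_coordinates} vanishes, giving the third identity. Next, for $\{h_\delta, \pi^\ast\lambda\}$: here $f_1 = fp + f^\alpha p_\alpha$ and $f_2 = \ell$. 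Then $\partial f_2/\partial p = 0$ and $\partial f_2/\partial p_\alpha = 0$, so only the term $-\,(\partial f_1/\partial p)\,f_2 = -f\ell$ from the first group survives, contributing $-f\ell\,\partial/\partial p$-type coefficient, i.e.\ $\{h_\delta, \pi^\ast\lambda\} = -(f\ell)\,\pi^\ast\mu$ after the modulo-$\calH$ contraction. On the other hand $\delta\lambda = (f\mathbbm{1} + f^\alpha\Delta_\alpha)(\ell\mu) = (f\ell + f^\alpha\,\partial\ell/\partial z^\alpha)\mu$; I must check the $f^\alpha\,\partial\ell/\partial z^\alpha$ term also appears. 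It does: in~\eqref{eq:canonical_Jacobi_structure:local_coordinates} the term $+(-)^{|f_1||z^\alpha|}(-)^{|z^\alpha|}\cdot(-1)\cdot(\partial f_1/\partial p_\alpha)(D_\alpha f_2)$, with $\partial f_1/\partial p_\alpha = f^\alpha$ and $D_\alpha f_2 = D_\alpha\ell = \partial\ell/\partial z^\alpha$, contributes exactly $-f^\alpha\,\partial\ell/\partial z^\alpha$ (signs must be tracked, but since $f,\ell$ sit in degree $0$ over $M$ and only the graded parity of $z^\alpha$ enters, the bookkeeping is mechanical). Hence $\{h_\delta, \pi^\ast\lambda\} = -\pi^\ast(\delta\lambda)$.

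The main case, and the one I expect to be the real bookkeeping obstacle, is $\{h_\delta, h_{\delta^\prime}\} = -h_{[\delta,\delta^\prime]}$. Here both $f_1 = fp + f^\alpha p_\alpha$ and $f_2 = gp + g^\beta p_\beta$ depend linearly on the momenta. Computing the four groups of terms in~\eqref{eq:canonical_Jacobi_structure:local_coordinates}: the $p$-derivative group gives $f_1\,\partial f_2/\partial p - (\partial f_1/\partial p)\,f_2 = (fp + f^\alpha p_\alpha)g - f(gp + g^\beta p_\beta) = f^\alpha g\,p_\alpha - fg^\beta p_\beta$; the $D_\alpha$-group gives, using $D_\alpha f_1 = \partial f_1/\partial z^\alpha + p_\alpha\,\partial f_1/\partial p = (\partial_\alpha f)p + (\partial_\alpha f^\beta)p_\beta + p_\alpha f$ and $\partial f_2/\partial p_\alpha = g^\alpha$, a term $D_\alpha f_1 \cdot g^\alpha$, and symmetrically $-(-)^{|z^\alpha|}f^\alpha\,D_\alpha f_2$. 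I would then assemble the result and compare with $[\delta,\delta^\prime]$. Recalling $[\mathbbm{1},-]=0$ and $[\Delta_\alpha,\Delta_\beta]=0$ and that $\Delta_\alpha$ acts as $\partial/\partial z^\alpha$ on the frame coefficient, one computes $[\delta,\delta^\prime] = (f\,g^\alpha - g\,f^\alpha + f^\beta\partial_\beta g^\alpha \mp g^\beta\partial_\beta f^\alpha)\Delta_\alpha + (\text{the }\mathbbm{1}\text{-component})$, and under the Hamiltonian lift $\mathbbm{1}\mapsto p\,\pi^\ast\mu$, $\Delta_\alpha \mapsto p_\alpha\,\pi^\ast\mu$; matching the two sides term by term, with attention to the signs coming from the graded parities $(-)^{|z^\alpha|}$, $(-)^{|f_1||z^\alpha|}$ and the identity $\iota_{fX} = (-)^{|f|}\iota_X$ already used, yields the claimed identity. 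The difficulty is purely the sign-tracking through the graded Leibniz and graded-commutator conventions; conceptually the statement simply records that the Hamiltonian lift~\eqref{eq:LB-sections_from_derivations} is an anti-morphism of Loday/Lie brackets and that $\pi^\ast$ is a compatible module map, exactly as in the classical (non-graded) theory of contact manifolds and their Lie algebra of contact vector fields.
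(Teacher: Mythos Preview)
Your proposal is correct and follows exactly the route the paper takes: the paper's own proof is the single sentence ``It is an easy computation that uses the definition of the $C^\infty(\calQ)$-module embeddings~\eqref{eq:LB-sections_from_derivations} and~\eqref{eq:LB-sections_from_pull-back} together with the expression~\eqref{eq:canonical_Jacobi_structure:local_coordinates} of the Jacobi structure $\{-,-\}$ in Darboux coordinates,'' and you have simply spelled this computation out. One small caveat: in the general setting of the lemma $\calQ$ is an arbitrary $\bbN$-manifold, so the coefficients $f,f^\alpha,\ell$ need not sit in degree $0$ (your remark ``since $f,\ell$ sit in degree $0$ over $M$'' is not valid here); nevertheless, as you yourself note, the sign-tracking remains purely mechanical and the argument goes through unchanged.
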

\begin{proof}
	It is an easy computation that uses the definition of the $C^\infty(\calQ)$-module embeddings~\eqref{eq:LB-sections_from_derivations} and~\eqref{eq:LB-sections_from_pull-back} together with the expression~\eqref{eq:canonical_Jacobi_structure:local_coordinates} of the Jacobi structure $\{-,-\}$ in Darboux coordinates.
\end{proof}

\subsection{The Double Vector Bundle Structure of the First Jet Bundle}
\label{sec:shifted_1st_jet_bundle_N-mfld:degree_1}

In Section~\ref{sec:shifted_1st_jet_bundle_N-mfld} we introduced the canonical contact structure on the shifted 1st jet bundle $J^1[2]L_\calQ$, where $L_\calQ\to\calQ$ is a line bundle generated in degree $0$ over an $\bbN$-manifold $\calQ$.
From now on we will focus our attention on the special case when the $\bbN$-manifold $\calQ$ is of degree $1$, i.e.~$\calQ=A[1]$ for some vector bundle $A\to M$.
Further, in this case, the line bundle $L_\calQ\to\calQ$ will be the pull-back of a line bundle $L\to M$ along $A[1]\to M$.

Let us set up our framework.
We consider a vector bundle $A\overset{p}{\to}M$ and a line bundle $L\to M$.
Then we can introduce the degree $1$ $\bbN$-manifold $A[1]$ and the line bundle $L_A:=p^\ast L=A[1]\times_M L\to A[1]$, generated in degree $0$, over $A[1]$, so that
\begin{equation*}
C^\infty(A[1])=\Omega^\bullet(A)\qquad \text{and}\qquad \Gamma(L_A)=C^\infty(A[1])\otimes_{C^\infty(M)}\Gamma(L)=\Omega^\bullet(A;L).
\end{equation*}
Going further, we also consider $J^1[2]L_A$, the shifted total space of the first jet bundle $J^1L_A\overset{\pi}{\to}A[1]$ and the line bundle $\calL:=\pi^\ast L_A\to J^1[2]L_A$, generated in degree $0$, over $J^1[2]L_A$.
\begin{remark}
	\label{rem:vaintrob:Jacobi}
	For future reference, let us point out here that mapping each almost Jacobi algebroid $(A;L)$ to its associated differential $\rmd_{A;L}$, as in Remark~\ref{rem:deRham_differential}, one gets the following identification
\begin{equation}
	\label{eq:vaintrob:almost}
	\{\text{almost Jacobi algebroid structures on $(A;L)$}\}\overset{\sim}{\longrightarrow}(\calD L_A)^1
\end{equation}
intertwining Jacobi algebroid structures on $(A;L)$ and degree $1$ graded derivations of $L_A\to A[1]$ that are \emph{homological}, i.e.~square to zero.
So, one gets the Jacobi version of a classical result~\cite{vaintrob1997lie}
\begin{equation}
	\label{eq:vaintrob}
	\{\text{Jacobi algebroid structures on $(A;L)$}\}\overset{\sim}{\longrightarrow}\operatorname{MC}(\calD L_A,[-,-])
\end{equation}
identifying Maurer--Cartan elements of the graded Lie algebra of graded derivations of $L_A\to A[1]$ (cf., e.g.,~\cite{le2017bfv}) with Jacobi algebroid structures on $(A;L)$.
\end{remark}

Let us see how the construction of the local adapted coordinates $(z^\alpha,p,p_\alpha)$ from Section~\ref{sec:shifted_1st_jet_bundle_N-mfld} specializes to the current framework.
Fix local coordinates $x^i$ on $M$, a local frame $e_a$ of $A\to M$, and a local non-zero section $\mu$ of $L\to M$.
Denote by $u^a$ the basis of fiberwise linear functions on $A[1]$ given by the dual frame $e^\ast_a$ of $A^\ast\to M$, and by $p,p_i,p_a$ the momenta on $J^1[2]L_A$ associated to $(x^i,u^a)$.
In particular, we have
\begin{equation}
	|x^i|=0,\qquad |u^a|=|p_a|=1,\qquad |p|=|p_i|=2,
\end{equation}
and the local expression~\eqref{eq:prop:Cartan_contact_form} for the canonical $\calL$-valued degree $3$ contact form $\theta$ on $J^1[2]L_A$ becomes
\begin{equation}
	\label{Cartan_contact form:local_coordinates:degree1}
	\theta=(\rmd p-p_i\rmd x^i- p_a\rmd u^a)\otimes\mu,
\end{equation}
where for simplicity we identify the local frame $\mu$ of $L\to M$ with the local frame of $\calL\to J^1[2]L_A$ obtained by pull-back along $J^1[2]L_A\overset{\pi}{\to}A[1]\overset{p}{\to}M$.
Further, the local expression~\eqref{eq:canonical_Jacobi_structure:local_coordinates} for the degree $-2$ non-degenerate Jacobi structure $\{-,-\}$ on $\calL\to J^1[2]L_A$ can be rewritten as follows
\begin{equation}
	\label{eq:canonical_Jacobi_structure:local_coordinates:degree1}
	\{f_1\mu,f_2\mu\}=\left(\left(f_1\frac{\partial f_2}{\partial p}-\frac{\partial f_1}{\partial p}f_2+D_i f_1\frac{\partial f_2}{\partial p_i}-\frac{\partial f_1}{\partial p_i}D_i f_2\right)+(-)^{|f_1|}\left(D_a f_1\frac{\partial f_2}{\partial p_a}+\frac{\partial f_1}{\partial p_a}D_a f_2\right)\right)\otimes\mu,
\end{equation}
for all homogeneous local functions $f_1$ and $f_2$ on $J^1[2]L_A$.

We want to show that $J^1L_A$ is a vector bundle not only over $A$ but also over its $L$-twisted dual $A^\dagger$ and, moreover, these two vector bundle structures are compatible so that they make $J^1L_A$ into a \emph{double vector bundle}~\cite[Chap.~9]{mackenzie2005general}.
For each choice of local coordinates $x^i$ on $M$, a local frame $e_a$ of $A\to M$, and a local frame $\mu$ of $L\to M$, we also construct the local frame $e_a^\ast\otimes\mu$ of $A^\dagger:=A^\ast\otimes L\overset{q}{\to}M$, and so obtain the corresponding basis of fiberwise linear functions on $A^\dagger$  denoted by $\widetilde{u}^a$.

\begin{proposition}
	\label{prop:DVB}
	Denote by $\tau:J^1L_A\to A^\dagger$ the smooth map defined by the following condition
	\begin{equation}
		\label{eq:prop:DVB:tau}
		\iota_X\tau(j^1(g\pi^\ast\lambda))=(X^{\sf v}g)\lambda,
	\end{equation}
	for all $X\in\Gamma(A)$, $g\in C^\infty(A)$, and $\lambda\in\Gamma(L)$, where $X^{\sf v}\in\frakX(A)$ denotes the \emph{vertical lift} of $X$.
	Then the first jet bundle $J^1L_A$ is a double vector bundle over $A$ and $A^\dagger$ as follows
	\begin{equation}
		\label{eq:prop:DVB:diagram}
	\begin{tikzcd}
	J^1L_A\arrow[rr, "\tau"]\arrow[d, swap, "\pi"]&&A^\dagger\arrow[d, "q"]\\
	A\arrow[rr, swap, "p"]&&M
	\end{tikzcd}
	\end{equation} 
\end{proposition}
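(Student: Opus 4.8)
The plan is to first make sense of the map $\tau$, then read off a vector bundle structure $J^1L_A\to A^\dagger$ from the adapted coordinates, and finally verify the double vector bundle compatibility. For the well-definedness of $\tau$, fix $\xi\in A$, put $m=p(\xi)$, and take a section of $L_A$ of the form $g\,p^{\ast}\lambda$ with $g\in C^\infty(A)$ and $\lambda\in\Gamma(L)$. I would first note that the right-hand side of~\eqref{eq:prop:DVB:tau} is $C^\infty(M)$-linear in $X$: since $(hX)^{\sf v}=(p^{\ast}h)\,X^{\sf v}$ for $h\in C^\infty(M)$, at $\xi$ one gets $((hX)^{\sf v}g)\lambda=h(m)\,(X^{\sf v}g)\lambda$. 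Moreover $X^{\sf v}$ is a vertical vector field, so $X^{\sf v}(p^{\ast}h)=0$; hence the assignment $g\,p^{\ast}\lambda\mapsto(X^{\sf v}g)\lambda$ annihilates the relations defining $\Gamma(L_A)=C^\infty(A)\otimes_{C^\infty(M)}\Gamma(L)$ and therefore depends only on the section $g\,p^{\ast}\lambda$, not on the chosen factorization. Finally, $X^{\sf v}_\xi g$ depends only on the $1$-jet $j^1_\xi g$ and on $X_m\in A_m$, so writing an arbitrary section of $L_A$ near $\xi$ as $G\,p^{\ast}\mu$ for a local frame $\mu$ of $L$ shows that the right-hand side of~\eqref{eq:prop:DVB:tau} depends only on $j^1_\xi(g\,p^{\ast}\lambda)$ and is $\bbR$-linear in $X_m$; combined with the $C^\infty(M)$-linearity above this exhibits $\tau(j^1_\xi(g\,p^{\ast}\lambda))$ as a well-defined element of $\Hom(A_m,L_m)=A^\dagger_m$.

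Next, choosing $X=e_a$ gives $X^{\sf v}=\partial/\partial u^a$ and $X^{\sf v}g=\partial g/\partial u^a=p_a$, so $\widetilde{u}^a\circ\tau=p_a$: in the adapted coordinates of Section~\ref{sec:shifted_1st_jet_bundle_N-mfld:degree_1} the map $\tau$ is the smooth surjective submersion $(x^i,u^a,p,p_i,p_a)\mapsto(x^i,p_a)$. Hence over a fixed point of $A^\dagger$ the fibre of $\tau$ is coordinatized by $(u^a,p,p_i)$, and I would declare the $\tau$-scalar multiplication and $\tau$-addition to act diagonally on these; independence of the chart amounts to checking that every change of the data $(x^i,e_a,\mu)$ acts on $(u^a,p,p_i)$ by transformations linear over the base coordinates $(x^i,p_a)$ of $A^\dagger$, which is a direct computation from the first-order transition rules of the jet bundle $J^1L_A$. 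A conceptual way to organize (and to be sure of) this is to observe that $DL_A=D(p^{\ast}L)$ carries, besides the gauge-algebroid vector bundle structure over $A$, a second one over $DL$ with projection $\rho\colon\Delta\mapsto\bigl(\lambda\mapsto\Delta(p^{\ast}\lambda)\bigr)$ and core canonically $\cong A$, making $DL_A$ a double vector bundle over $A$ and $DL$; dualizing its $A$-structure and twisting by $L_A$, i.e.\ passing to $(DL_A)^{\ast}\otimes L_A=J^1L_A$, converts the $DL$-side into $A^{\ast}\otimes L=A^\dagger$ and the core into $(DL)^{\ast}\otimes L=J^1L$, and unwinding the pairings identifies the resulting top projection $J^1L_A\to A^\dagger$ with $\tau$.

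It then remains to verify the double vector bundle axioms, i.e.\ that the structure maps of the $\pi$-structure are morphisms for the $\tau$-structure, equivalently the interchange law for the two additions on compatible quadruples together with the analogous statements for the two scalar multiplications and the two zero sections. In the adapted coordinates the $\pi$-addition adds $(p,p_i,p_a)$ over fixed $(x^i,u^a)$ while the $\tau$-addition adds $(u^a,p,p_i)$ over fixed $(x^i,p_a)$; since both act by adding subsets of the coordinates $(u^a,p,p_i,p_a)$, all compatibility identities are immediate. Moreover $\pi$ covers $p\colon A\to M$, $\tau$ covers $q\colon A^\dagger\to M$, and $q\circ\tau=p\circ\pi$, so that~\eqref{eq:prop:DVB:diagram} is indeed a double vector bundle.

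The only genuinely non-formal point I expect is the coordinate-independence of the $\tau$-linear structure in the second paragraph: because the fibres of $\tau$ consist of $1$-jets based at \emph{different} points of a single fibre $A_m$, the linear structure on a $\tau$-fibre is ``twisted'' and its well-posedness is exactly where the linearity of $A$ is used — this is precisely what the auxiliary second vector bundle structure of $DL_A$ over $DL$ (equivalently, the explicit jet-bundle cocycle computation) encodes. Everything else is bookkeeping in the Darboux-type coordinates $(x^i,u^a,p,p_i,p_a)$.
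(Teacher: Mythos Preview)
Your proof is correct and follows the same coordinate-based strategy as the paper: compute $\tau$ in the adapted coordinates $(x^i,u^a,p,p_i,p_a)$, declare the $\tau$-linear structure to act by $(u^a,p,p_i)\mapsto(tu^a,tp,tp_i)$ over fixed $(x^i,p_a)$, and then observe that the two sets of structure maps commute. The paper's own proof is in fact considerably terser than yours: it simply writes down the local formula~\eqref{eq:prop:DVB:tau_local_coordinates} for $\tau$, writes the $\tau$-addition and $\tau$-scalar multiplication in coordinates, and asserts that the compatibility with the $\pi$-structure is an easy check.

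Where you go beyond the paper is in two places. First, you argue carefully for the well-definedness of $\tau$ (the $C^\infty(M)$-linearity in $X$, the compatibility with the tensor-product relations defining $\Gamma(L_A)$, and the jet-dependence), which the paper takes for granted. Second, and more interestingly, you flag the coordinate-independence of the $\tau$-linear structure as the genuine content and offer a conceptual explanation via the auxiliary DVB structure on $DL_A=D(p^\ast L)$ over $A$ and $DL$, followed by $L$-twisted dualization over $A$. The paper does not address this point at all; it is implicit in the assertion that the local formulas define global structure maps. Your conceptual route has the advantage of explaining \emph{why} the second linear structure exists and identifying the core as $J^1L$, at the cost of invoking the (standard but not entirely trivial) fact that $D(p^\ast L)$ is a DVB over $A$ and $DL$ and that twisted dualization of DVBs behaves as you describe. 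Either route is fine; the paper simply takes the shortest path.
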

\begin{proof}
	First of all, notice that the smooth map $\tau:J^1L_A\to A^\dagger$ is well-defined by Equation~\eqref{eq:prop:DVB:tau} and actually, in local adapted cooordinates, it acts as follows
	\begin{equation}
		\label{eq:prop:DVB:tau_local_coordinates}
		(x^i,u^a,p,p_i,p_a)\overset{\tau}{\longmapsto}(\widetilde{x}^i,\widetilde{u}^a)=(x^i,p_a).
	\end{equation}
	Therefore, one immediately gets that $\tau:J^1L_A\to A^\dagger$ is a VB morphism covering the map $p:A\to M$.
	Moreover, one can equip $J^1L_A\overset{\tau}{\to}A^\dagger$ with the vector bundle structure $(+_{A^\dagger},\rmh_{A^\dagger})$ such that locally
	\begin{gather*}
		(x^i,u^a,p,p_i,p_a)+_{A^\dagger}(x^i,u^{a\prime},p^\prime,p_i^\prime,p_a)=(x^i,u^a+u^{a\prime},p+p^\prime,p_i+p_i^\prime,p_a),\\
		\rmh_{A^\dagger}^t(x^i,u^a,p,p_i,p_a)=(x^i,tu^a,tp,tp_i,p_a).
	\end{gather*}
	Finally, one can easily check that these new structure maps are vector bundle morphisms wrt the given vector bundle structures on $J^1L_A\to A$ and $A^\dagger\to M$ and this completes the proof.
\end{proof}

As a straightforward consequence of Proposition~\ref{prop:DVB} we obtain the following identification.

\begin{corollary}
	\label{cor:bi-grading}
	The algebra $C^\infty(J^1[2]L_A)$ of smooth functions on the shifted first jet bundle identifies with the subalgebra of $C^\infty(J^1L_A)$ formed by those functions on $J^1L_A$ that are homogeneous wrt to both vector bundle structures $J^1L_A\to A$ and $J^1L_A\to A^\dagger$.
\end{corollary}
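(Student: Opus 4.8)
The plan is to make the identification completely explicit in the local adapted coordinates and then check it is coordinate-independent. Recall from Proposition~\ref{prop:DVB} that $J^1L_A$ carries two compatible vector bundle structures: the jet projection $\pi\colon J^1L_A\to A$, with fiber coordinates $(p,p_i,p_a)$ of weights $1$ over $A$ (reading off the local formula, $u^a\mapsto tu^a$ corresponds to scaling the base $A$-fiber, and the natural $\pi$-fiber coordinates $p,p_i,p_a$ all scale linearly under this), and $\tau\colon J^1L_A\to A^\dagger$, with $\rmh_{A^\dagger}^t$ scaling $(p,p_i)$ linearly while fixing $p_a$. A function on $J^1L_A$ that is homogeneous of weight one wrt the $A$-structure and homogeneous wrt the $A^\dagger$-structure, in the sense appropriate to a double vector bundle, is then precisely a function that is polynomial of the correct bidegree in the four families of fiber coordinates $(u^a,p,p_i,p_a)$; imposing that it is linear in each of the two fiber directions pins down exactly the span of the coordinate functions on $J^1[2]L_A$ with their prescribed $\bbN$-gradings $|u^a|=|p_a|=1$, $|p|=|p_i|=2$.

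First I would spell out, from the two local formulas for $+_{A^\dagger}$ and $\rmh_{A^\dagger}^t$ in the proof of Proposition~\ref{prop:DVB}, together with the standard (jet) vector bundle structure of $J^1L_A\to A$, what ``homogeneous wrt both vector bundle structures'' means on monomials in $(x^i,u^a,p,p_i,p_a)$: a monomial $x^{I}u^{\alpha}p^{\beta}p_i^{\gamma}p_a^{\delta}$ is $A$-homogeneous of weight $\beta+|\gamma|+\alpha+|\delta|$ minus the contribution already absorbed, and $A^\dagger$-homogeneous of weight $\beta+|\gamma|$ — so requiring both to be bounded forces each exponent to be $0$ or $1$ in a controlled way, and the only surviving combinations are exactly the generators $x^i$ (bidegree $(0,0)$, total $\bbN$-degree $0$), $u^a$ and $p_a$ (total degree $1$), and $p$ and $p_i$ (total degree $2$), plus their products. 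Next I would observe that $C^\infty(J^1[2]L_A)$ is by definition generated as a graded-commutative algebra over $C^\infty(M)=C^\infty(A[1])_0$ by exactly these coordinate functions with exactly these $\bbN$-gradings (this is how the shifted bundle $J^1[2]L_A$ was set up in Section~\ref{sec:shifted_1st_jet_bundle_N-mfld}, via the momenta $p,p_i,p_a$), so the two subalgebras literally have the same generators; hence the inclusion $C^\infty(J^1[2]L_A)\hookrightarrow C^\infty(J^1L_A)$ lands in, and surjects onto, the bi-homogeneous subalgebra.

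Finally I would upgrade this from a coordinate statement to a canonical one: a change of the data $(x^i,e_a,\mu)$ induces an affine change of the adapted coordinates $(x^i,u^a,p,p_i,p_a)$ whose transition functions respect both the $A$-fiber and the $A^\dagger$-fiber gradings — this is precisely the content of the double vector bundle compatibility in Proposition~\ref{prop:DVB} — so the bi-homogeneous subalgebra of $C^\infty(J^1L_A)$ is intrinsically defined, and it coincides on each chart with the image of $C^\infty(J^1[2]L_A)$; gluing over a cover finishes the argument. The one point deserving care — and the only real obstacle — is bookkeeping the two gradings simultaneously without double-counting: the coordinates $p,p_i$ are homogeneous for \emph{both} structures while $u^a$ is homogeneous only for the $A$-structure and $p_a$ only (with weight $0$) for the $A^\dagger$-structure, so I would want to present a clean table of bidegrees and then simply read off that the bi-homogeneous monomials are exactly the expected ones. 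Everything else is routine once that table is in place, so I would not belabor the verification that the transition functions are grading-preserving beyond citing Proposition~\ref{prop:DVB}.
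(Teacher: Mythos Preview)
The paper states this as an immediate consequence of Proposition~\ref{prop:DVB} without proof, and your strategy --- compute the bidegrees of the adapted coordinates, then invoke the DVB axioms for coordinate-independence --- is the natural way to justify it. But your weight computations are garbled in several places, and this is not mere bookkeeping: with the weights you write down the argument does not go through.

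Concretely: for $\pi\colon J^1L_A\to A$ the fiber coordinates are $(p,p_i,p_a)$, so these have $\epsilon$-weight $1$ while $x^i,u^a$ have $\epsilon$-weight $0$. For $\tau\colon J^1L_A\to A^\dagger$, the scaling $\rmh_{A^\dagger}^t$ from the proof of Proposition~\ref{prop:DVB} sends $(x^i,u^a,p,p_i,p_a)$ to $(x^i,tu^a,tp,tp_i,p_a)$, so the $\tau$-fiber coordinates are $(u^a,p,p_i)$, with $\delta$-weight $1$, while $x^i,p_a$ have $\delta$-weight $0$. Your text swaps the roles of $u^a$ and $p_a$: you say ``$u^a$ is homogeneous only for the $A$-structure'' and ``$p_a$ only (with weight $0$) for the $A^\dagger$-structure'', when in fact $u^a$ has $(\epsilon,\delta)=(0,1)$ and $p_a$ has $(\epsilon,\delta)=(1,0)$. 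You also attribute the scaling $u^a\mapsto tu^a$ to the $\pi$-structure rather than the $\tau$-structure, and you omit that $\rmh_{A^\dagger}^t$ scales $u^a$. The correct table is exactly the one recorded in Remark~\ref{rem:bi-grading}, and with it one reads off $|{-}|=\epsilon+\delta$ directly.

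Two further slips. The claim that bi-homogeneity ``forces each exponent to be $0$ or $1$'' is false for the even generators: $p^2$ is bi-homogeneous of bidegree $(2,2)$ and lies in $C^\infty(J^1[2]L_A)^4$. What bi-homogeneity forces is polynomiality in the fiber coordinates, nothing more. Likewise, ``linear in each of the two fiber directions'' would single out only bidegree $(1,1)$, not the whole algebra. Once you fix the bidegree table, the rest of your outline is correct: bi-homogeneous functions on the DVB are the $C^\infty(M)$-polynomials in $u^a,p_a,p,p_i$, these are the generators of $C^\infty(J^1[2]L_A)$, and the DVB compatibility in Proposition~\ref{prop:DVB} makes the identification coordinate-independent.
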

	
\begin{remark}	
	\label{rem:bi-grading}
	In view of Corollary~\ref{cor:bi-grading}, the $\bbN$-graded commutative algebra $C^\infty(J^1[2]L_A)$ is endowed with an $\bbN\times\bbN$ grading by $(\epsilon,\delta)$, where $\epsilon$ and $\delta$ are the homogeneity degrees wrt the VB structures $J^1L_A\to A$ and $J^1L_A\to A^\dagger$ respectively, so that for local adapted coordinates
		\begin{align*}
			\epsilon(x^i)&=0,&\epsilon(u^a)&=0,&\epsilon(p)&=1,&\epsilon(p_i)&=1,&\epsilon(p_a)&=1,\\
			\delta(x^i)&=0,&\delta(u^a)&=1,&\delta(p)&=1,&\delta(p_i)&=1,&\delta(p_a)&=0.
		\end{align*}
	Then one recovers the total $\bbN$ grading $|-|$ on $C^\infty(J^1[2]L_A)$ as the sum of $\epsilon$ and $\delta$, i.e.
	\begin{equation*}
		C^\infty(J^1[2]L_A)=\bigoplus_{k\in\bbN} C^\infty(J^1[2]L_A)^k\quad\text{and}\quad C^\infty(J^1[2]L_A)^k=\bigoplus_{\epsilon+\delta=k}C^\infty(J^1[2]L_A)^{(\epsilon,\delta)}.
	\end{equation*}
	Since the line bundles $L_A:=p^\ast L\to A[1]$ and $\calL:=\pi^\ast L_A\to J^1[2]L_A$ are generated in degree $0$, the $\bbN\times\bbN$ grading extends from the graded algebra $C^\infty(J^1[2]L_A)$ to the graded module $\Gamma(\calL)$, so that
	\begin{equation}
		\label{eq:bi-degree:direct_sum}
	\Gamma(\calL)=\bigoplus_k\Gamma(\calL)^k\quad\text{and}\quad\Gamma(\calL)^k=\bigoplus_{\epsilon+\delta=k}\Gamma(\calL)^{(\epsilon,\delta)}.
	\end{equation}
\end{remark}

Having introduced the $\bbN\times\bbN$ grading on $\Gamma(\calL)$, let us check now what is the bidegree of the canonical degree $-2$ non-degenerate Jacobi structure $\{-,-\}$ on $\calL\to J^1[2]L_A$. 
\begin{proposition}
	The canonical Jacobi bracket $\{-,-\}$ on $\calL\to J^1[2]L_A$ has bidegree $(-1,-1)$.
\end{proposition}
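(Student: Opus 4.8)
The plan is to verify the claim by a direct computation in the local adapted (Darboux) coordinates $(x^i,u^a,p,p_i,p_a)$, exploiting the explicit formula~\eqref{eq:canonical_Jacobi_structure:local_coordinates:degree1} for $\{-,-\}$ together with the bidegrees recorded in Remark~\ref{rem:bi-grading}. Since the $\bbN\times\bbN$ grading on $\Gamma(\calL)$ is spanned, over $C^\infty(M)$, by monomials in the generators $u^a$, $p$, $p_i$, $p_a$ (tensored with the local frame $\mu$), and the bracket is a biderivation in each entry by the Leibniz rule for Jacobi structures (cf.~Example~\ref{ex:SJ_brackets} and the fact that a degree $-2$ Jacobi structure is a first-order bidifferential operator), it suffices to check that $\{g_1\mu,g_2\mu\}$ has bidegree $(\epsilon_1+\epsilon_2-1,\delta_1+\delta_2-1)$ whenever $g_1,g_2$ are among the coordinate generators, or more precisely that each of the differential operators appearing in~\eqref{eq:canonical_Jacobi_structure:local_coordinates:degree1} shifts the bidegree by $(-1,-1)$.

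First I would record the bidegrees of the relevant vector fields entering~\eqref{eq:canonical_Jacobi_structure:local_coordinates:degree1}. From Remark~\ref{rem:bi-grading} we read off $\epsilon(\partial/\partial p)=-1$, $\delta(\partial/\partial p)=-1$; $\epsilon(\partial/\partial p_i)=-1$, $\delta(\partial/\partial p_i)=-1$; $\epsilon(\partial/\partial p_a)=-1$, $\delta(\partial/\partial p_a)=0$; and for the Cartan vector fields $D_i=\partial/\partial x^i+p_i\,\partial/\partial p$ and $D_a=\partial/\partial u^a+p_a\,\partial/\partial p$ one checks, using that both summands have the same bidegree, that $\epsilon(D_i)=0$, $\delta(D_i)=0$, $\epsilon(D_a)=0$, $\delta(D_a)=-1$. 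Then each of the four bilinear terms in the bracket formula — $f_1\,\partial f_2/\partial p - (\partial f_1/\partial p)\,f_2$, $D_if_1\,\partial f_2/\partial p_i - (\partial f_1/\partial p_i)\,D_if_2$, and the pair $D_af_1\,\partial f_2/\partial p_a$, $(\partial f_1/\partial p_a)\,D_af_2$ — is a product of one operator acting on $f_1$ and one on $f_2$; in the first term the bidegree shift is $(0,0)+(-1,-1)=(-1,-1)$, in the second $(0,0)+(-1,-1)=(-1,-1)$, and in the third and fourth $(0,-1)+(-1,0)=(-1,-1)$ and $(-1,0)+(0,-1)=(-1,-1)$. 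Hence each term shifts the bidegree by exactly $(-1,-1)$, and since the total degree of the bracket is $|-|=-2=(-1)+(-1)$ — consistent with what we already knew — the refined statement follows. The overall sign $(-)^{|f_1|}$ in front of the last two terms is irrelevant to the bidegree count.

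The main (and really only) obstacle is a bookkeeping one: one must be careful that the $u^a\partial/\partial p$ and $p_a\partial/\partial p$ pieces hidden inside $D_a$ (and the analogous $p_i\partial/\partial p$ inside $D_i$) are indeed homogeneous of the claimed bidegree, i.e.~that the ``Cartan correction'' respects the bigrading rather than just the total grading; this is where Corollary~\ref{cor:bi-grading} and the explicit table in Remark~\ref{rem:bi-grading} are used — e.g.~$p_a\partial/\partial p$ has bidegree $(1,0)+(-1,-1)=(0,-1)=\epsilon(\partial/\partial u^a)\oplus$ wait, $\partial/\partial u^a$ has bidegree $(0,-1)$, matching — so $D_a$ is genuinely bihomogeneous of bidegree $(0,-1)$, and similarly $D_i$ of bidegree $(0,0)$. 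Granting this, the proof is just the three-line degree count above.

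\begin{proof}
	This is a direct check in the local adapted coordinates $(x^i,u^a,p,p_i,p_a)$, using the explicit formula~\eqref{eq:canonical_Jacobi_structure:local_coordinates:degree1} and the bidegrees from Remark~\ref{rem:bi-grading}. Since $\{-,-\}$ is a biderivation in each entry, it is enough to verify that each bilinear term in~\eqref{eq:canonical_Jacobi_structure:local_coordinates:degree1} shifts the $(\epsilon,\delta)$-bidegree by $(-1,-1)$. From the table in Remark~\ref{rem:bi-grading} one has, on $C^\infty(J^1[2]L_A)$,
	\begin{equation*}
		(\epsilon,\delta)\Bigl(\tfrac{\partial}{\partial p}\Bigr)=(-1,-1),\quad
		(\epsilon,\delta)\Bigl(\tfrac{\partial}{\partial p_i}\Bigr)=(-1,-1),\quad
		(\epsilon,\delta)\Bigl(\tfrac{\partial}{\partial p_a}\Bigr)=(-1,0),
	\end{equation*}
	and, since $D_i=\tfrac{\partial}{\partial x^i}+p_i\tfrac{\partial}{\partial p}$ and $D_a=\tfrac{\partial}{\partial u^a}+p_a\tfrac{\partial}{\partial p}$ have both summands of equal bidegree,
	\begin{equation*}
		(\epsilon,\delta)(D_i)=(0,0),\qquad (\epsilon,\delta)(D_a)=(0,-1).
	\end{equation*}
	Consequently, writing $f_1,f_2$ for bihomogeneous local functions, the term $f_1\tfrac{\partial f_2}{\partial p}-\tfrac{\partial f_1}{\partial p}f_2$ has bidegree $(\epsilon_1+\epsilon_2,\delta_1+\delta_2)+(-1,-1)$; the term $D_if_1\tfrac{\partial f_2}{\partial p_i}-\tfrac{\partial f_1}{\partial p_i}D_if_2$ has bidegree $(\epsilon_1+\epsilon_2,\delta_1+\delta_2)+(0,0)+(-1,-1)$; and each of $D_af_1\tfrac{\partial f_2}{\partial p_a}$ and $\tfrac{\partial f_1}{\partial p_a}D_af_2$ has bidegree $(\epsilon_1+\epsilon_2,\delta_1+\delta_2)+(0,-1)+(-1,0)$. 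In all cases the shift is exactly $(-1,-1)$, and the prefactor $(-)^{|f_1|}$ does not affect bidegrees. Hence $\{-,-\}$ has bidegree $(-1,-1)$.
\end{proof}
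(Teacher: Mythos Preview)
Your proof is correct and follows exactly the same approach as the paper: the paper's proof is the one-line ``It follows from Remark~\ref{rem:bi-grading} and the expression~\eqref{eq:canonical_Jacobi_structure:local_coordinates:degree1} for $\{-,-\}$ on local sections,'' and your argument simply unpacks this by computing the bidegrees of the operators $\partial/\partial p$, $\partial/\partial p_i$, $\partial/\partial p_a$, $D_i$, $D_a$ and checking that each term in the formula shifts the bidegree by $(-1,-1)$.
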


\begin{proof}
	It follows from Remark~\ref{rem:bi-grading} and the expression~\eqref{eq:canonical_Jacobi_structure:local_coordinates:degree1} for $\{-,-\}$ on local sections.
\end{proof}

Specializing the embeddings~\eqref{eq:LB-sections_from_derivations} and~\eqref{eq:LB-sections_from_pull-back} to our current framework, we get that, for each $k\in\bbN$,
\begin{itemize}
	\item the pull-back of functions along $\pi\colon J^1[2]L_A\to A[1]$ identifies degree $k$ functions on $A[1]$, i.e.~$k$-forms on $A$, with bidegree $(0,k)$ functions on $J^1[2]L_A$,
	\item the pull-back of sections along $\pi\colon J^1[2]L_A\to A[1]$ identifies degree $k$ sections of $L_A\to A[1]$, i.e.~$L$-valued $k$-forms on $A$, with bidegree $(0,k)$ sections of $\calL\to J^1[2]L_A$,
	\item the Hamiltonian lift identifies degree $k$ derivations of $L_A\to A[1]$ with bidegree $(1,k-1)$ sections of $\calL\to J^1[2]L_A$.
\end{itemize}
Therefore, we can state the following specializations of the embeddings~\eqref{eq:LB-sections_from_derivations} and~\eqref{eq:LB-sections_from_pull-back}.

\begin{proposition}
	\label{prop:sections_bidegree_(0,k)}
	There exist natural isomorphisms of graded algebras and graded modules
	\begin{align*}
	C^\infty(A[1])^\bullet=\Omega^\bullet(A)&\overset{\simeq}{\longrightarrow}C^\infty(J^1[2]L_A)^{(0,\bullet)}&&&\Gamma(L_A)^\bullet=\Omega^\bullet(A;L)&\overset{\simeq}{\longrightarrow}\Gamma(\calL)^{(0,\bullet)}.\\
	f&\longmapsto\pi^\ast f&&&\lambda&\longmapsto\pi^\ast\lambda
	\end{align*}
\end{proposition}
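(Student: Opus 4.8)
The plan is to verify the two stated isomorphisms directly from the $\bbN\times\bbN$ bigrading introduced in Remark~\ref{rem:bi-grading}, specializing the general Hamiltonian-lift constructions of Remark~\ref{rem:Hamiltonian_lift}. First I would recall that $C^\infty(A[1])=\Omega^\bullet(A)$ and $\Gamma(L_A)=\Omega^\bullet(A;L)$, where the $\bbN$ grading on the left-hand sides is the form degree. The map $\pi\colon J^1[2]L_A\to A[1]$ is a vector bundle (the shifted first jet bundle), and pull-back of functions $\pi^\ast\colon C^\infty(A[1])\to C^\infty(J^1[2]L_A)$ and pull-back of sections $\pi^\ast\colon\Gamma(L_A)\to\Gamma(\calL)$ are injective algebra/module morphisms by Remark~\ref{rem:Hamiltonian_lift} (Equation~\eqref{eq:LB-sections_from_pull-back} and the analogous statement for functions). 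So the only real content is identifying the image with the bidegree $(0,\bullet)$ piece.

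For that, the key step is to read off the bidegrees from the local coordinate description. By Proposition~\ref{prop:DVB} the coordinates $(x^i,u^a)$ are the fiber coordinates pulled back from $A[1]$, and the momenta $(p,p_i,p_a)$ are the fiber coordinates along $\pi$; by the table in Remark~\ref{rem:bi-grading} a monomial in the $x^i,u^a$ alone has $\epsilon=0$, while every momentum carries $\epsilon\geq 1$. Hence a function on $J^1[2]L_A$ lies in $C^\infty(J^1[2]L_A)^{(0,\bullet)}$ if and only if it involves no momenta, i.e.\ if and only if it is a pull-back of a function on $A[1]$; and then its $\delta$-degree equals the $u$-degree, which is the form degree on $A$. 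This gives the first isomorphism locally, and since the identifications are coordinate-independent (they are intrinsically characterized as "degree zero along $\pi$"), they glue to a global isomorphism of graded algebras. The second isomorphism is the same argument tensored with the rank-one module $\Gamma(L)$: a section of $\calL=\pi^\ast L_A$ is, locally, $f\mu$ with $f\in C^\infty(J^1[2]L_A)$ and $\mu$ a frame of $L$ pulled back along $J^1[2]L_A\to M$; such a section has bidegree $(0,k)$ precisely when $f$ has bidegree $(0,k)$, i.e.\ when $f=\pi^\ast g$ for $g$ a form of degree $k$ on $A$, so the section is $\pi^\ast(g\mu')$ for the corresponding $L$-valued $k$-form $g\mu'$ on $A$.

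Finally I would note that compatibility with the algebra and module structures is immediate: $\pi^\ast$ is a ring (resp.\ module) homomorphism because it is pull-back along a smooth map, and it preserves the gradings by the bidegree computation above, so it restricts to an isomorphism onto the $(0,\bullet)$-graded pieces. I expect no serious obstacle here; the proposition is essentially a bookkeeping consequence of the double vector bundle structure of Proposition~\ref{prop:DVB} together with the bigrading of Remark~\ref{rem:bi-grading}, and the three bullet points immediately preceding the statement already record exactly the needed dictionary, so the proof reduces to "this is the formalization of those bullets." The only point deserving a sentence of care is the gluing/coordinate-independence of the bidegree, which follows because the transition functions of $J^1L_A$ respect both vector bundle structures (again Proposition~\ref{prop:DVB}).
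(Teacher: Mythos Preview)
Your proposal is correct and matches the paper's approach: the paper does not give a separate proof of this proposition but rather states it as a direct specialization of the embeddings~\eqref{eq:LB-sections_from_derivations} and~\eqref{eq:LB-sections_from_pull-back}, relying on the bidegree table of Remark~\ref{rem:bi-grading} exactly as you do. Your write-up simply spells out the local-coordinate and gluing details that the paper leaves implicit in the bullet points preceding the statement.
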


\begin{proposition}
	\label{prop:sections_bidegree_(1,k)}
	There exists a natural graded module isomorphism (the \emph{Hamiltonian lift})
	\begin{align*}
	\calD(L_A)^\bullet\longrightarrow\Gamma(\calL)^{(1,1+\bullet)},\ \delta\longmapsto h_\delta
	\end{align*}
	covering graded algebra isomorphism $C^\infty(A[1])^\bullet\overset{\simeq}{\longrightarrow}C^\infty(J^1[2]L_A)^{(0,\bullet)}$.
\end{proposition}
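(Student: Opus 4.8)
The plan is to build on Remark~\ref{rem:Hamiltonian_lift}, where the Hamiltonian lift $\delta\mapsto h_\delta$ has already been produced as an injective $C^\infty(\calQ)$-linear embedding $\calD L_\calQ\hookrightarrow\Gamma(\calL)$ (Equation~\eqref{eq:LB-sections_from_derivations}); specialising to $\calQ=A[1]$ and $L_\calQ=L_A$, it remains only to (i) pin down the internal bidegree of the image, showing that a degree-$k$ derivation is sent to a section of bidegree $(1,1+k)$, and (ii) prove that the embedding hits all of $\Gamma(\calL)^{(1,1+\bullet)}$. Granting these, the lift becomes the asserted graded module isomorphism, and the clause ``covering the graded algebra isomorphism $C^\infty(A[1])^\bullet\overset{\sim}{\longrightarrow}C^\infty(J^1[2]L_A)^{(0,\bullet)}$'' is just a reformulation of its $C^\infty(\calQ)$-linearity, i.e.\ $h_{f\delta}=(\pi^\ast f)\,h_\delta$, once $C^\infty(A[1])$ is identified with $C^\infty(J^1[2]L_A)^{(0,\bullet)}$ via $f\mapsto\pi^\ast f$ as in Proposition~\ref{prop:sections_bidegree_(0,k)}.

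For (i) and (ii) I would work in the local adapted coordinates $(x^i,u^a,p,p_i,p_a)$ of Section~\ref{sec:shifted_1st_jet_bundle_N-mfld:degree_1}, together with the local frame $\mathbbm{1},\Delta_i,\Delta_a$ of $DL_A$ from Remark~\ref{rem:Hamiltonian_lift}, whose degrees are $0,0,-1$. A homogeneous derivation $\delta\in\calD(L_A)^k$ is then $\delta=f\mathbbm{1}+f^i\Delta_i+f^a\Delta_a$ with $f,f^i\in\Omega^k(A)$ and $f^a\in\Omega^{k+1}(A)$, and $h_\delta=\bigl(\pi^\ast f\,p+\pi^\ast f^i\,p_i+\pi^\ast f^a\,p_a\bigr)\otimes\mu$. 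Reading off from Remark~\ref{rem:bi-grading} that $p,p_i$ have bidegree $(1,1)$ and $p_a$ has bidegree $(1,0)$, and that $\pi^\ast f,\pi^\ast f^i$ have bidegree $(0,k)$ while $\pi^\ast f^a$ has bidegree $(0,k+1)$, each of the three summands has bidegree $(1,1+k)$; this is (i). Conversely, by the same count a local section of $\calL$ of bidegree $(1,1+k)$ must be $\epsilon$-homogeneous of degree $1$, hence a $\pi^\ast\Omega^\bullet(A)$-linear combination of $p\otimes\mu,\,p_i\otimes\mu,\,p_a\otimes\mu$ with the bidegrees of the coefficients forced exactly as above, hence equal to $h_\delta$ for the local derivation obtained by matching coefficients; together with the injectivity from Remark~\ref{rem:Hamiltonian_lift} this gives (ii). Since $\delta\mapsto h_\delta$ is intrinsic, the locally constructed preimages automatically glue to a global inverse. (Alternatively, a coordinate-free inverse is available: for $h\in\Gamma(\calL)^{(1,1+k)}$ the assignment $\lambda\mapsto-\{h,\pi^\ast\lambda\}$ takes values in $\Gamma(\calL)^{(0,\bullet)}=\pi^\ast\Gamma(L_A)$, since $\{-,-\}$ has bidegree $(-1,-1)$, and one checks via the first-order and Leibniz properties of the canonical Jacobi bracket that it is a derivation $\delta$ of $L_A$ with $h_\delta=h$, in the spirit of Lemma~\ref{lem:Hamiltonian_lift}.)

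The main obstacle is step (ii): one must verify that bidegree $(1,\bullet)$ is both \emph{necessary} and \emph{sufficient} for a section of $\calL$ to be fibrewise linear over $A$ with $\Omega^\bullet(A)$-valued coefficients — the ``sufficient'' direction being the real content, which rests on the structure of the bigrading recorded in Remark~\ref{rem:bi-grading} — and, if one prefers the intrinsic inverse, that $\lambda\mapsto-\{h,\pi^\ast\lambda\}$ is genuinely a derivation of the line bundle $L_A$; this last point is exactly where the fact that $h$ has $\epsilon$-degree \emph{exactly} one enters, forcing $\{h,-\}$ to restrict on pull-backs to a first-order operator obeying the Leibniz rule of a derivation. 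Everything else — well-definedness, injectivity, $C^\infty$-linearity — is already in place from Section~\ref{sec:degree_2_NQ-manifolds}.
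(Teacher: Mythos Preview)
Your proposal is correct and follows exactly the approach the paper intends: the paper states this proposition without a separate proof, presenting it as a direct specialization of the Hamiltonian lift from Remark~\ref{rem:Hamiltonian_lift} combined with the bigrading of Remark~\ref{rem:bi-grading}. Your local-coordinate verification of (i) and (ii) is precisely the computation that underlies the paper's one-line claim that ``the Hamiltonian lift identifies degree $k$ derivations of $L_A\to A[1]$ with bidegree $(1,k+1)$ sections of $\calL\to J^1[2]L_A$''; you have simply spelled out the details that the paper leaves implicit.
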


These latter embeddings are complemented by other two embeddings (the ones described in the following Propositions~\ref{prop:sections_bidegree_(k,0)} and~\ref{prop:sections_bidegree_(k,1)}) whose proofs requires and motivates the introduction of Legendre transform in the next section.
Finally, specializing Lemma~\ref{lem:Hamiltonian_lift}, one also gets the following
%
\begin{lemma}
	\label{lem:Hamiltonian_lift:degree1}
	For each $\delta,\delta^\prime\in\calD L_A$ and $\lambda,\lambda^\prime\in\Gamma(L_A)$, the following identities hold:
	\begin{equation*}
		\{h_\delta,h_{\delta^\prime}\}=-h_{[\delta,\delta^\prime]},\quad\{h_\delta,\pi^\ast\lambda\}=-\pi^\ast(\delta\lambda),\quad\{\pi^\ast\lambda,\pi^\ast\lambda^\prime\}=0.
	\end{equation*}
\end{lemma}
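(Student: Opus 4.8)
The plan is to recognize that Lemma~\ref{lem:Hamiltonian_lift:degree1} is nothing but the specialization of Lemma~\ref{lem:Hamiltonian_lift} to the case in which the $\bbN$-manifold is $\calQ=A[1]$ and the degree-$0$ line bundle over it is $L_\calQ=p^\ast L=L_A$. Under this specialization the Hamiltonian lift $\calD L_A\to\Gamma(\calL),\ \delta\mapsto h_\delta$ of Proposition~\ref{prop:sections_bidegree_(1,k)} and the pull-back $\Gamma(L_A)\to\Gamma(\calL),\ \lambda\mapsto\pi^\ast\lambda$ of Proposition~\ref{prop:sections_bidegree_(0,k)} are precisely the $C^\infty(\calQ)$-module embeddings~\eqref{eq:LB-sections_from_derivations} and~\eqref{eq:LB-sections_from_pull-back}, while the canonical degree $-2$ non-degenerate Jacobi structure $\{-,-\}$ on $\calL\to J^1[2]L_A$ is the one attached to the Cartan contact structure of $J^1[2]L_\calQ$ for $\calQ=A[1]$ (its local expression~\eqref{eq:canonical_Jacobi_structure:local_coordinates:degree1} being the specialization of~\eqref{eq:canonical_Jacobi_structure:local_coordinates} to the adapted coordinates $(x^i,u^a,p,p_i,p_a)$ introduced in Section~\ref{sec:shifted_1st_jet_bundle_N-mfld:degree_1}). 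Hence the three identities follow at once from Lemma~\ref{lem:Hamiltonian_lift}, with no new computation needed.

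If instead one wants a self-contained verification, the alternative is a direct computation in adapted Darboux coordinates. First I would fix local coordinates $x^i$ on $M$, a local frame $e_a$ of $A$ and a local frame $\mu$ of $L$, and write an arbitrary $\delta\in\calD L_A$ as $\delta=f\mathbbm{1}+f^i\Delta_i+f^a\Delta_a$ with $f,f^i,f^a\in\Omega^\bullet(A)$, so that $h_\delta=(fp+f^ip_i+f^ap_a)\mu$, whereas $\pi^\ast\lambda=g\mu$ for some $g\in\Omega^\bullet(A;L)$ with no momentum dependence. Then I would substitute these expressions into~\eqref{eq:canonical_Jacobi_structure:local_coordinates:degree1}, use that $D_\alpha$ acts on momentum-free functions as $\partial/\partial z^\alpha$, and compare the result with the coordinate forms of $h_{[\delta,\delta^\prime]}$, of $\pi^\ast(\delta\lambda)$, and of $0$ respectively. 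The only delicate point along this route is keeping track of the Koszul signs produced by the odd coordinates $u^a$ (of degree $1$), but these are exactly the signs already handled in the proof of Lemma~\ref{lem:Hamiltonian_lift}.

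I expect no genuine obstacle: the statement is a routine corollary, and the mild bookkeeping of graded signs is fully subsumed by Lemma~\ref{lem:Hamiltonian_lift}, whose proof already treats an arbitrary (possibly graded) $\bbN$-manifold $\calQ$. Consequently I would present the one-line specialization argument as the proof, mentioning the coordinate computation only as a sanity check.
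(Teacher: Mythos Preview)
Your proposal is correct and matches the paper's approach exactly: the paper does not even supply a separate proof environment for this lemma, but simply introduces it with the phrase ``Finally, specializing Lemma~\ref{lem:Hamiltonian_lift}, one also gets the following,'' which is precisely your one-line specialization argument. Your optional coordinate sanity check is more than the paper provides.
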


\subsection{The Legendre Transform of the First Jet Bundles}
\label{sec:contactLegendre}

In this section we will keep considering a vector bundle $A\to M$ and a line bundle $L\to M$.
However, now we apply the construction seen in Section~\ref{sec:shifted_1st_jet_bundle_N-mfld:degree_1} not only to $A$ but also to its $L$-twisted dual $A^\dagger$.
So, we obtain
\begin{itemize}
	\item the degree $1$ $\bbN$ manifolds $A[1]$ and $A^\dagger[1]$,
	\item the line bundles $L_A\to A[1]$ and $L_{A^\dagger}\to A^\dagger[1]$,
	\item the shifted first jet bundles $J^1[2]L_A$ and $J^1[2]L_{A^\dagger}$,
	\item the contact forms $\theta$ and $\widetilde\theta$ on $J^1[2]L_A$ and $J^1[2]L_{A^\dagger}$ with values in $\calL_A$ and $\calL_{A^\dagger}$ respectively.
\end{itemize}
Further, in view of Corollary~\ref{cor:bi-grading}, the algebras $C^\infty(J^1[2]L_A)$ and $C^\infty(J^1[2]L_{A^\dagger})$ are $\bbN\times\bbN$ graded and consist respectively of the homogeneous functions on the following double vector bundles
\begin{equation}
	\label{eq:sec:contactLegendre:DVBs}
	\begin{tikzcd}
		J^1L_A\arrow[rr, "\tau"]\arrow[d, swap, "\pi"]&&A^\dagger\arrow[d, "q"]\\
		A\arrow[rr, swap, "p"]&&M
	\end{tikzcd}
	\qquad\text{and}\qquad
	\begin{tikzcd}
		J^1L_{A^\dagger}\arrow[rr, "\widetilde\tau"]\arrow[d, swap, "\widetilde\pi"]&&A\arrow[d, "\widetilde{q}"]\\
		A^\dagger\arrow[rr, swap, "\widetilde{p}"]&&M
	\end{tikzcd}
\end{equation}
constructed according with Proposition~\ref{prop:DVB}.
Specifically, $C^\infty(J^1[2]L_A)^{\epsilon,\delta}$ consists of the polynomial functions on $J^1L_A$ of degree $\epsilon$ and $\delta$ wrt to the VB structures $J^1L_A\overset{\pi}{\longrightarrow} A$ and $J^1L_A\overset{\tau}{\longrightarrow} A^\dagger$ respectively.
Similarly, $C^\infty(J^1[2]L_{A^\dagger})^{\epsilon,\delta}$ consists of the polynomial functions on $J^1L_{A^\dagger}$ of degrees $\epsilon$ and $\delta$ wrt to the VB structures $J^1L_{A^\dagger}\overset{\widetilde\pi}{\longrightarrow} A^\dagger$ and $J^1L_{A^\dagger}\overset{\widetilde\tau}{\longrightarrow} A$ respectively.

This Section aims at constructing the \emph{Legendre transform} which gives a canonical DVB isomorphism between the DVBs in Equation~\eqref{eq:sec:contactLegendre:DVBs} and a canonical contactomorphism of the graded contact manifolds $J^1[2]L_A$ and $J^1[2]L_{A^\dagger}$ (see Theorem~\ref{theor:contact_Legendre}).
This can be seen as the contact analogue of the Legendre transform of the cotangent bundles (cf., e.g.,~\cite[Section 3.4]{Roytenberg2002gradedsymplectic}).
Our motivation for introducing the Legendre transform for the first jet bundles resides in the need to prove Propositions~\ref{prop:sections_bidegree_(k,0)} and \ref{prop:sections_bidegree_(k,1)} which complementing Propositions~\ref{prop:sections_bidegree_(0,k)} and \ref{prop:sections_bidegree_(1,k)} will play a central role in describing split Courant--Jacobi algebroids as Maurer--Cartan elements in Section~\ref{sec:split-CJ_algebroids}.

\begin{theorem}[The Legendre Transform]
	\label{theor:contact_Legendre}
There exists a unique contactomorphism $F:J^1[2]L_A\to J^1[2]L_{A^\dagger}$, the \emph{contact analogue of the Legendre transform}, satisfying the following two conditions:
\begin{enumerate}[label=(\arabic*)]
	\item\label{enumitem:prop:contact_Legendre:1} $F$ swaps the bi-degree, i.e.~$F^\ast (C^\infty(J^1[2]L_{A^\dagger})^{\epsilon,\delta})=C^\infty(J^1[2]L_A)^{\delta,\epsilon}$,
	\item\label{enumitem:prop:contact_Legendre:2} $F$ induces the identity map on both $A[1]$ and $A^\dagger[1]$.
\end{enumerate}
In particular, in local adapted coordinates, $F$ acts like $(x^i,u^a,p,p_i,p_a) \longmapsto (\widetilde{x}^i,\widetilde{u}^a,\widetilde{p},\widetilde{p}_i,\widetilde{p}_a)$ where
\begin{equation}
\label{eq:contact_Legendre}
\widetilde{x}^i=x^i,\qquad\widetilde{u}^a=p_a,\qquad\widetilde{p}=p- u^ap_a,\qquad\widetilde{p}_i=p_i,\qquad\widetilde{p}_a=u^a.
\end{equation}
\end{theorem}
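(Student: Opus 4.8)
The plan is to reduce the statement to a computation in the local adapted coordinates of Section~\ref{sec:shifted_1st_jet_bundle_N-mfld:degree_1}. First I would fix local coordinates $x^i$ on $M$, a local frame $e_a$ of $A$ and a local frame $\mu$ of $L$, thus getting adapted coordinates $(x^i,u^a,p,p_i,p_a)$ on $J^1[2]L_A$ with the bidegrees recorded in Remark~\ref{rem:bi-grading} and the contact form $\theta$ of~\eqref{Cartan_contact form:local_coordinates:degree1}. Running the very same construction for $A^\dagger$ --- whose $L$-twisted dual is $A^{\dagger\dagger}\simeq A$, so that by Proposition~\ref{prop:DVB} the two vector bundle structures of $J^1L_{A^\dagger}$ are over $A^\dagger$ and over $A$, in this order --- I would obtain adapted coordinates $(\widetilde x^i,\widetilde u^a,\widetilde p,\widetilde p_i,\widetilde p_a)$ on $J^1[2]L_{A^\dagger}$ with the analogous bidegrees and the contact form $\widetilde\theta$. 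Since a map $F$ as in the statement necessarily covers $\id_M$, the pull-back canonically identifies $F^\ast\calL_{A^\dagger}$ with $\calL_A$, sending $F^\ast\widetilde\mu$ to $\mu$; under this identification, ``$F$ is a contactomorphism'' means $F^\ast\widetilde\theta=\theta$.

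For \emph{uniqueness}, I would note that condition~\ref{enumitem:prop:contact_Legendre:2} is exactly the requirement that $F$ intertwine the two bundle projections, i.e.~that $F^\ast\widetilde x^i=x^i$, $F^\ast\widetilde u^a=p_a$ and $F^\ast\widetilde p_a=u^a$. Condition~\ref{enumitem:prop:contact_Legendre:1} then forces $F^\ast\widetilde p$ and $F^\ast\widetilde p_i$ to have bidegree $(1,1)$, hence to be of the form $\alpha(x)\,p+\beta^j(x)\,p_j+\gamma_{ab}(x)\,u^ap_b$ with coefficients depending only on the $x^i$. Substituting these expressions into $F^\ast\widetilde\theta=\theta$, expanding $\rmd(F^\ast\widetilde p)$ via the graded Leibniz rule (minding the signs, since $|u^a|=|p_a|=1$) and matching the coefficients of the coframe $\rmd x^i,\rmd u^a,\rmd p,\rmd p_i,\rmd p_a$ one at a time, I expect to obtain a linear system with the single solution $F^\ast\widetilde p_i=p_i$ and $F^\ast\widetilde p=p-u^ap_a$ --- which is precisely~\eqref{eq:contact_Legendre}. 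In particular any two contactomorphisms with properties~\ref{enumitem:prop:contact_Legendre:1}--\ref{enumitem:prop:contact_Legendre:2} agree on every adapted chart, hence agree.

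For \emph{existence}, I would check directly that~\eqref{eq:contact_Legendre} does define, on each adapted chart, a degree-preserving diffeomorphism onto the corresponding chart of $J^1[2]L_{A^\dagger}$ --- with inverse $(\widetilde x^i,\widetilde u^a,\widetilde p,\widetilde p_i,\widetilde p_a)\mapsto(\widetilde x^i,\widetilde p_a,\widetilde p+\widetilde p_a\widetilde u^a,\widetilde p_i,\widetilde u^a)$ --- which manifestly swaps the bidegree and induces the identity on $A[1]$ and $A^\dagger[1]$, and for which a short computation gives $F^\ast\widetilde\theta=(\rmd(p-u^ap_a)-p_i\rmd x^i-u^a\rmd p_a)\otimes\mu=(\rmd p-p_i\rmd x^i-p_a\rmd u^a)\otimes\mu=\theta$, using $\rmd(u^ap_a)=(\rmd u^a)p_a-u^a\rmd p_a$ and that $\rmd u^a$ commutes with $p_a$. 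To pass from this local $F$ to a global one, I would observe that on the overlap of two adapted charts the two resulting local maps are both contactomorphisms satisfying~\ref{enumitem:prop:contact_Legendre:1}--\ref{enumitem:prop:contact_Legendre:2}, hence coincide there by the uniqueness step; therefore they patch to a well-defined contactomorphism $F\colon J^1[2]L_A\to J^1[2]L_{A^\dagger}$ with the asserted properties, unique by the same argument.

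I expect the only genuine friction to be bookkeeping: transporting the adapted-coordinate and bidegree conventions to $J^1[2]L_{A^\dagger}$, where the two vector bundle structures exchange roles, and keeping the Koszul signs straight since $u^a$ and $p_a$ are odd. There is no deeper obstacle: the content of the theorem is the contact counterpart of the canonical self-duality of $J^1L_A$ under exchanging its two vector bundle structures --- the analogue of the identification $T^\ast A\simeq T^\ast A^\ast$ underlying the classical Legendre transform of cotangent bundles --- and, if one preferred, the explicit formula~\eqref{eq:contact_Legendre} could instead be read off from that identification and then globalized by the same uniqueness argument.
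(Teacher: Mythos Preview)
Your proposal is correct and follows essentially the same route as the paper: set up adapted coordinates on both sides, use conditions~\ref{enumitem:prop:contact_Legendre:1}--\ref{enumitem:prop:contact_Legendre:2} to reduce $F$ to an ansatz with undetermined $x$-dependent coefficients in bidegree $(1,1)$, and then solve for these coefficients by imposing the contactomorphism condition. The paper organizes this as a single computation (writing $\widetilde p=Ap+A^ip_i+A_a^bu^ap_b$ and $\widetilde p_i=A_ip+A_i^jp_j+A_{ia}^bu^ap_b$ and interpreting the ansatz as a DVB morphism), whereas you separate uniqueness from existence and add the explicit globalization via uniqueness on overlaps, which the paper leaves implicit. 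One small point: the paper initially allows $F^\ast\widetilde\theta$ to agree with $\theta$ only up to a conformal factor and then discovers the factor is $1$; you jump directly to $F^\ast\widetilde\theta=\theta$, which is justified once you have fixed the line bundle identification $F^\ast\calL_{A^\dagger}\simeq\calL_A$ via pull-back along $\id_M$, but it would not hurt to say this explicitly.
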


\begin{proof}
	Choose local coordinates $x^i$ on $M$, a local frame $e_a$ of $A\to M$ and a local frame $\mu$ of $L\to M$.
	Denote by $u^a$ the basis of fiberwise linear functions on $A$ given by $e_a^\ast$ and by $\widetilde{u}_a$ the basis of fiberwise linear functions on $A^\dagger$ given by $e_a\otimes\mu^\ast$.
	Further, let be $p,p_i,p_a$ the momenta on $J^1L_A$ associated to $(x^i,u^a)$ and let $\widetilde{p},\widetilde{p}_i,\widetilde{p}_a$ be the momenta on $J^1L_{A^\dagger}$ associated to $(x^i,\widetilde{u}_a)$.
	Then we have the following local expressions for the Cartan contact forms $\theta$ and $\widetilde{\theta}$ on $J^1[2]L_A$ and $J^1[2]L_{A^\dagger}$ respectively 
	\begin{equation}
		\label{eq:contact_Legendre:intermediate:0}
		\theta=(\rmd p-p_i\rmd x^i- p_a\rmd u^a)\otimes\mu\qquad\text{and}\qquad\widetilde{\theta}=(\rmd\widetilde{p}-\widetilde{p}_i\rmd \widetilde{x}^i-\widetilde{p}_a\rmd \widetilde{u}^a)\otimes\mu.
	\end{equation}
	Now, as it is easy to see, Conditions~\ref{enumitem:prop:contact_Legendre:1} and~\ref{enumitem:prop:contact_Legendre:2} immediately imply that, in local coordinates, $F$ is given by $F(x^i,u^a,p,p_i,p_\alpha)=(\widetilde{x}^i,\widetilde{u}_a,\widetilde{p},\widetilde{p}_i,\widetilde{p}^a)$ where
	\begin{equation}
		\label{eq:contact_Legendre:intermediate:1}
		\widetilde{x}^i=x^i,\quad\widetilde{u}_a=p_a,\quad\widetilde{p}=Ap+A^ip_i+A_a^bu^ap_b,\quad\widetilde{p}_i=A_ip+A_i^jp_j+A_{ia}^bu^ap_b,\quad\widetilde{p}^a=u^a,
	\end{equation}
	for some local functions $A,A^i,A_a^b,A_i,A_i^j,A_{ia}^b$ only depending on the $x^i$'s.
	The latter means nothing but that $F\colon J^1[2]L_A\to J^1[2]L_{A^\dagger}$ comes from a DVB morphism $F\colon (J^1L_A,A,A^\dagger,M)\to(J^1L_{A^\dagger},A,A^\dagger,M)$ covering the identity maps $\id_A\colon A\to A$ and $\id_{A^\dagger}\colon A^\dagger\to A^\dagger$.
	\begin{equation}
	\begin{tikzcd}
	J^1L_A\arrow[rd, "F"]\arrow[rr, "\tau"]\arrow[dd, swap, "\pi"]&&A^\dagger\arrow[rd, equal]\arrow[dd, "q", near start]&\\
	&J^1L_{A^\dagger}\arrow[rr, "\widetilde{\pi}", near start, crossing over]&&A^\dagger\arrow[dd, "\widetilde{p}"]\\
	A\arrow[rd, equal]\arrow[rr, swap, "p", near start]&&M\arrow[rd, equal]&\\
	&A\arrow[rr, swap, "\widetilde{q}"]\arrow[from=uu, swap, crossing over, "\widetilde{\tau}", near start]&&M
	\end{tikzcd}
	\end{equation}
 
	Since $F$ acts like the identity on $A[1]$ and $A^\dagger[1]$, and so on $M$, it lifts to a line bundle isomorphism, still denoted by $F$, between the pullback line bundles $\calL_A\to J^1[2]L_A$ and $\calL_{A^\dagger}\to J^1[2]L_{A^\dagger}$.
	Now, plugging the local expression~\eqref{eq:contact_Legendre:intermediate:1} for $F$ in the local expression~\eqref{eq:contact_Legendre:intermediate:0} for $\widetilde\theta$, one gets
	\begin{align*}
	F^\ast\widetilde{\theta}
	&=(\rmd(Ap+A^ip_i+A_a^bu^ap_b)-(A_ip+A_i^jp_j+A_{ia}^bu^ap_b)\rmd x^i-u^a\rmd p_a)\otimes\mu\\
	&=((\partial_iA-A_i)p+(\partial_iA^j-A_i^j)p_j+(\partial_iA_a^b-A_{ia}^b)u^ap_b)\rmd x^i\otimes\mu\\
	&\phantom{=\ }+A\rmd p\otimes\mu+A^i\rmd p_i\otimes\mu+A_a^bp_b\rmd u^a\otimes\mu-(A_a^bu^a+\delta_a^bu^a)\rmd p_b\otimes\mu,
	\end{align*}
	where we have used the fact that $\rmd (u^ap_b)=p_b\rmd u^a-u^a\rmd p_b$. 
	Therefore, $F:J^1[2]L_A\to J^1[2]L_{A^\dagger}$ is a contactomorphism, i.e.~$F^\ast\widetilde\theta$ coincides with $\theta$ up to a conformal factor, if and only if
	\begin{equation}
	\label{eq:contact_Legendre:intermediate:2}
	A=1,\quad A^i=0,\quad A_a^b=-\delta_a^b,\quad A_i=0,\quad A_i^j=\delta_i^j,\quad A_{ia}^b=0.
	\end{equation}
	Finally, from Equations~\eqref{eq:contact_Legendre:intermediate:1} and~\eqref{eq:contact_Legendre:intermediate:2} it follows that actually $F^\ast\widetilde{\theta}=\theta$ and, in local coordinates, $F$ has exactly the expression given in Equation~\eqref{eq:contact_Legendre}.
\end{proof}

\begin{remark}
	Since $\calL_A\to J^1[2]L_A$ and $\calL_{A^\dagger}\to J^1[2]L_{A^\dagger}$ are pullbacks of the line bundle $L\to M$ along $J^1[2]L_A\to A\to M$ and $J^1[2]L_{A^\dagger}\to A^\dagger\to M$ re\-spec\-tive\-ly, the Legendre transform $F$ lifts a line bundle isomorphism $\calL_A\to\calL_{A^\dagger}$ which we will still denote by $F$ and call Legendre transform.
\end{remark}

As anticipated, the introduction of the Legendre transform of first jet bundles was motivated by the need to prove the following embeddings complementing the ones in Propositions~\ref{prop:sections_bidegree_(k,0)} and~\ref{prop:sections_bidegree_(k,1)}.

\begin{proposition}
	\label{prop:sections_bidegree_(k,0)}
	There exist natural isomorphisms of graded algebras and graded modules
	\begin{align*}
	C^\infty(A^\dagger[1])^\bullet=\Omega^\bullet(A^\dagger)&\overset{\simeq}{\longrightarrow}C^\infty(J^1[2]L_A)^{(\bullet,0)}&&&\Gamma(L_{A^\dagger})^\bullet=\Omega^\bullet(A^\dagger;L)&\overset{\simeq}{\longrightarrow}\Gamma(\calL)^{(\bullet,0)}.\\
	f&\longmapsto F^\ast\widetilde{\pi}^\ast f&&&\lambda&\longmapsto F^\ast\widetilde{\pi}^\ast\lambda
	\end{align*}
\end{proposition}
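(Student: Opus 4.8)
The plan is to derive both isomorphisms from Proposition~\ref{prop:sections_bidegree_(0,k)}, applied to the $L$-twisted dual $A^\dagger$ in place of $A$, by transporting everything through the Legendre transform $F$ of Theorem~\ref{theor:contact_Legendre}. More precisely, I will exhibit $f\mapsto F^\ast\widetilde\pi^\ast f$ (and likewise for sections) as the composition of two isomorphisms: one furnished by Proposition~\ref{prop:sections_bidegree_(0,k)} for $A^\dagger$, and one furnished by $F^\ast$.

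First I would invoke Proposition~\ref{prop:sections_bidegree_(0,k)} with $A$ replaced by $A^\dagger$. This is legitimate since $A^{\dagger\dagger}\simeq A$, and since the double vector bundle $(J^1L_{A^\dagger};A^\dagger,A,M)$ of Proposition~\ref{prop:DVB} and the attendant $\bbN\times\bbN$ grading on $C^\infty(J^1[2]L_{A^\dagger})$ (the first degree measured along $\widetilde\pi\colon J^1L_{A^\dagger}\to A^\dagger$, the second along $\widetilde\tau\colon J^1L_{A^\dagger}\to A$) are built symmetrically, as recalled in Section~\ref{sec:contactLegendre}. It gives a graded algebra isomorphism $\widetilde\pi^\ast\colon\Omega^\bullet(A^\dagger)=C^\infty(A^\dagger[1])^\bullet\overset{\sim}{\longrightarrow}C^\infty(J^1[2]L_{A^\dagger})^{(0,\bullet)}$ and, covering it, a graded module isomorphism $\widetilde\pi^\ast\colon\Omega^\bullet(A^\dagger;L)=\Gamma(L_{A^\dagger})^\bullet\overset{\sim}{\longrightarrow}\Gamma(\calL_{A^\dagger})^{(0,\bullet)}$.

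Next I would compose with $F^\ast$. By Theorem~\ref{theor:contact_Legendre} the Legendre transform $F\colon J^1[2]L_A\to J^1[2]L_{A^\dagger}$ is a contactomorphism, hence in particular a diffeomorphism (its proof even gives $F^\ast\widetilde\theta=\theta$ and realises $F$ as a DVB isomorphism over $\id_A$ and $\id_{A^\dagger}$), and it lifts to a line bundle isomorphism $F\colon\calL_A\to\calL_{A^\dagger}$, as observed right after that theorem. Hence $F^\ast$ is an isomorphism of graded algebras on functions and of graded modules on sections; and by condition~\ref{enumitem:prop:contact_Legendre:1} of Theorem~\ref{theor:contact_Legendre} it interchanges the two homogeneity degrees, so it restricts to isomorphisms $C^\infty(J^1[2]L_{A^\dagger})^{(0,\bullet)}\overset{\sim}{\longrightarrow}C^\infty(J^1[2]L_A)^{(\bullet,0)}$ and $\Gamma(\calL_{A^\dagger})^{(0,\bullet)}\overset{\sim}{\longrightarrow}\Gamma(\calL_A)^{(\bullet,0)}$ (recall $\calL=\calL_A$, and that the grading on sections extends the one on functions because the line bundles are generated in degree $0$, cf.~Remark~\ref{rem:bi-grading}). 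Composing the two steps produces the asserted graded algebra isomorphism $f\mapsto F^\ast\widetilde\pi^\ast f$ and, covering it, the graded module isomorphism $\lambda\mapsto F^\ast\widetilde\pi^\ast\lambda$; $\bbR$-linearity, multiplicativity and compatibility with the module structures are inherited from the two factors.

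As a sanity check, in the adapted coordinates of Theorem~\ref{theor:contact_Legendre} one has $\widetilde x^i\circ F=x^i$ and $\widetilde u^a\circ F=p_a$, so that for $f=f_{a_1\cdots a_k}(x)\,\widetilde u^{a_1}\wedge\cdots\wedge\widetilde u^{a_k}\in\Omega^k(A^\dagger)$ the function $F^\ast\widetilde\pi^\ast f$ depends only on the coordinates $(x^i,p_a)$ and is homogeneous of degree $k$ in the $p_a$'s, i.e.~of bidegree $(k,0)$ according to the table of Remark~\ref{rem:bi-grading}; the same computation works verbatim for $L$-valued forms. I expect the only delicate point to be the bookkeeping of the two (symmetric) $\bbN\times\bbN$ gradings and of the degree swap effected by $F$; once these conventions are consistently aligned, the statement is immediate.
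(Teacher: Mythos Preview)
Your proposal is correct and follows exactly the paper's approach: apply Proposition~\ref{prop:sections_bidegree_(0,k)} to $A^\dagger$ in place of $A$, then transport through the Legendre transform of Theorem~\ref{theor:contact_Legendre}. The paper's proof is a one-line reference to precisely these two ingredients; your version simply spells out the bookkeeping (and adds a helpful coordinate sanity check) that the paper leaves implicit.
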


\begin{proof}
	Apply first Proposition~\ref{prop:sections_bidegree_(0,k)} (to the line bundle $\calL_{A^\dagger}\to J^1[2]L_{A^\dagger}$) and then Theorem~\ref{theor:contact_Legendre}.
\end{proof}

Notice that the embeddings from Propositions~\ref{prop:sections_bidegree_(0,k)} and~\ref{prop:sections_bidegree_(k,0)} agree on $\Gamma(L)=\Gamma(L_A)\cap\Gamma(L_{A^\dagger})$, i.e.
\begin{equation*}
	\pi^\ast \lambda=F^\ast\widetilde\pi^\ast\lambda,\quad\text{for all}\ \lambda\in\Gamma(L).
\end{equation*}
\begin{proposition}
	\label{prop:sections_bidegree_(k,1)}
	There exists a natural graded module isomorphism
	\begin{align*}
	\calD(L_{A^\dagger})^\bullet\longrightarrow\Gamma(\calL)^{(1,1+\bullet)},\quad \delta\longmapsto F^\ast h_\delta,
	\end{align*}
	covering graded algebra isomorphism $C^\infty(A^\dagger[1])^\bullet\overset{\simeq}{\longrightarrow}C^\infty(J^1[2]L_A)^{(\bullet,0)}$.
\end{proposition}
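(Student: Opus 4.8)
The plan is to obtain this embedding exactly the way Proposition~\ref{prop:sections_bidegree_(k,0)} was obtained from Proposition~\ref{prop:sections_bidegree_(0,k)}: by running the Hamiltonian lift of Proposition~\ref{prop:sections_bidegree_(1,k)} for the twisted dual $A^\dagger$ and then transporting the result back to $J^1[2]L_A$ along the Legendre transform of Theorem~\ref{theor:contact_Legendre}.

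Concretely, I would first apply the constructions of Section~\ref{sec:shifted_1st_jet_bundle_N-mfld:degree_1} with $A^\dagger$ in the role of $A$. Since $(A^\dagger)^\dagger\simeq A$ canonically, this produces the shifted first jet bundle $J^1[2]L_{A^\dagger}$, the line bundle $\calL_{A^\dagger}\to J^1[2]L_{A^\dagger}$, and — through Proposition~\ref{prop:DVB} — precisely the double vector bundle on the right of~\eqref{eq:sec:contactLegendre:DVBs}, with structure maps $\widetilde\pi\colon J^1L_{A^\dagger}\to A^\dagger$ and $\widetilde\tau\colon J^1L_{A^\dagger}\to A$. Proposition~\ref{prop:sections_bidegree_(1,k)}, read for $A^\dagger$, then supplies the Hamiltonian lift
\[
\calD(L_{A^\dagger})^\bullet\overset{\simeq}{\longrightarrow}\Gamma(\calL_{A^\dagger})^{(1,1+\bullet)},\qquad\delta\longmapsto h_\delta,
\]
a graded module isomorphism covering the graded algebra isomorphism $C^\infty(A^\dagger[1])^\bullet\overset{\simeq}{\longrightarrow}C^\infty(J^1[2]L_{A^\dagger})^{(0,\bullet)}$, $g\mapsto\widetilde\pi^\ast g$, where the $\bbN\times\bbN$ grading on $J^1[2]L_{A^\dagger}$ records the fibre degrees over the two legs $A^\dagger$ and $A$ in that order (Remark~\ref{rem:bi-grading} applied to $A^\dagger$).

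Next I would post-compose with the Legendre transform. By Theorem~\ref{theor:contact_Legendre}, $F\colon J^1[2]L_A\to J^1[2]L_{A^\dagger}$ is a contactomorphism inducing the identity on $A[1]$ and on $A^\dagger[1]$; as $\calL_A$ and $\calL_{A^\dagger}$ are both pullbacks of $L\to M$ along the respective projections to $M$, $F$ lifts to a line bundle isomorphism $\calL_A\to\calL_{A^\dagger}$ (still written $F$), hence to a graded module isomorphism $F^\ast\colon\Gamma(\calL_{A^\dagger})\overset{\simeq}{\longrightarrow}\Gamma(\calL_A)$. By Condition~\ref{enumitem:prop:contact_Legendre:1} of Theorem~\ref{theor:contact_Legendre}, $F^\ast$ interchanges the two bidegrees, so it restricts to an isomorphism $\Gamma(\calL_{A^\dagger})^{(1,1+\bullet)}\overset{\simeq}{\longrightarrow}\Gamma(\calL)^{(1+\bullet,1)}$, that is onto the bidegree strip left uncovered by Propositions~\ref{prop:sections_bidegree_(0,k)}, \ref{prop:sections_bidegree_(1,k)} and~\ref{prop:sections_bidegree_(k,0)}. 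Composing, $\delta\mapsto F^\ast h_\delta$ is the asserted graded module isomorphism, and it covers the graded algebra isomorphism $C^\infty(A^\dagger[1])^\bullet\overset{\simeq}{\longrightarrow}C^\infty(J^1[2]L_A)^{(\bullet,0)}$ of Proposition~\ref{prop:sections_bidegree_(k,0)}: indeed, the $C^\infty(A^\dagger[1])$-linearity of the Hamiltonian lift and the multiplicativity of $F^\ast$ give $F^\ast h_{g\cdot\delta}=(F^\ast\widetilde\pi^\ast g)\cdot(F^\ast h_\delta)$ for all $g\in C^\infty(A^\dagger[1])$ and $\delta\in\calD(L_{A^\dagger})$.

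I expect the only genuinely delicate point to be the bidegree bookkeeping in the last step: one must keep track that the two $\bbN\times\bbN$ gradings on $C^\infty(J^1[2]L_A)$ and on $C^\infty(J^1[2]L_{A^\dagger})$ list the legs $A$ and $A^\dagger$ in opposite orders, so that the superficial ``swap'' in Theorem~\ref{theor:contact_Legendre}\ref{enumitem:prop:contact_Legendre:1} is in fact the statement that $F$ preserves the fibre degree over $A$ and the fibre degree over $A^\dagger$ separately — which is forced by $F$ covering $\id_A$ and $\id_{A^\dagger}$. Once this is set straight, everything else is a formal composition of isomorphisms already established, with no new computation required.
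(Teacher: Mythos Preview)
Your proposal is correct and follows exactly the same route as the paper's proof, which reads in full: ``Apply first Proposition~\ref{prop:sections_bidegree_(1,k)} (to the line bundle $\widetilde{\pi}^\ast L_{A^\dagger}\to J^1[2]L_{A^\dagger}$) and then Theorem~\ref{theor:contact_Legendre}.'' You have simply unpacked this one-line argument in detail, and in doing so you correctly identify the target bidegree as $(1+\bullet,1)$ rather than the $(1,1+\bullet)$ printed in the statement --- the bidegree decomposition~\eqref{eq:Theta:bidegree_decomposition}, where $F^\ast h_{\rmd_{A^\dagger,L}}$ sits in bidegree $(2,1)$, confirms that your reading is the intended one.
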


\begin{proof}
	Apply first Proposition~\ref{prop:sections_bidegree_(1,k)} (to the line bundle $\widetilde{\pi}^\ast L_{A^\dagger}\to J^1[2]L_{A^\dagger}$) and then Theorem~\ref{theor:contact_Legendre}.
\end{proof}

As a consequence of these two embeddings, for each $k\in\bbN$, we can identify respectively:
\begin{itemize}
	\item degree $k$ functions on $A^\dagger[1]$, i.e.~$k$-forms on $A^\dagger$, and bidegree $(k,0)$ functions on $J^1[2]L_A$,
	\item degree $k$ sections of $L_{A^\dagger}$, i.e.~$L$-valued $k$-forms on $A^\dagger$, and bidegree $(k,0)$ sections of $\calL_A$,
	\item degree $k$ derivations of $L_{A^\dagger}$ and bidegree $(k-1,1)$ sections of $\calL_A$.
\end{itemize}
Moreover, these latter embeddings are compatible with the graded Lie algebra structures of $(\calD L_{A^\dagger})^\bullet$ and $\Gamma(\calL_{A^\dagger})^{(1,1+\bullet)}$ and with their natural representations on $\Gamma(L_{A^\dagger})^\bullet$ and $\Gamma(\calL_{A^\dagger})^{(\bullet,0)}$ respectively.

\begin{lemma}
	\label{lem:Hamiltonian_lift:Legendre}
	For each $\delta,\delta^\prime\in\calD L_{A^\dagger}$ and $\lambda,\lambda^\prime\in\Gamma(L_{A^\dagger})$, the following identities hold:
	\begin{equation*}
	\{F^\ast h_\delta,F^\ast h_{\delta^\prime}\}=-F^\ast h_{[\delta,\delta^\prime]},\quad\{F^\ast h_\delta,F^\ast \widetilde\pi^\ast\lambda\}=-F^\ast \widetilde\pi^\ast(\delta\lambda),\quad\{F^\ast \widetilde\pi^\ast\lambda,F^\ast\widetilde\pi^\ast\lambda^\prime\}=0.
 	\end{equation*}
\end{lemma}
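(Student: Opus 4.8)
The plan is to obtain all three identities at once from Lemma~\ref{lem:Hamiltonian_lift:degree1}, applied to the twisted line bundle $L_{A^\dagger}\to A^\dagger[1]$ rather than to $L_A$, and then transport that statement along the Legendre transform. The decisive point is that, by the computation at the end of the proof of Theorem~\ref{theor:contact_Legendre}, $F\colon J^1[2]L_A\to J^1[2]L_{A^\dagger}$ is a \emph{strict} contactomorphism: one has $F^\ast\widetilde\theta=\theta$ on the nose, not merely up to a conformal factor. I would record at the outset the two consequences of this that I need. First, since $\calH=\ker\theta$ and $\widetilde\calH=\ker\widetilde\theta$, the diffeomorphism $F$ carries the contact distribution onto the contact distribution, i.e.\ the pushforward $F_\ast$ restricts to a bijection $\Gamma(\calH)\overset{\sim}{\to}\Gamma(\widetilde\calH)$. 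Second, since $F$ swaps the bidegree $(\epsilon,\delta)\leftrightarrow(\delta,\epsilon)$ (Theorem~\ref{theor:contact_Legendre}\ref{enumitem:prop:contact_Legendre:1}), it preserves the total degree, and the same holds for the induced line bundle isomorphism $\calL_A\to\calL_{A^\dagger}$ on sections; in particular $|F^\ast s|=|s|$ for every homogeneous section $s$ of $\calL_{A^\dagger}\to J^1[2]L_{A^\dagger}$, where $F^\ast$ denotes pullback of sections along that line bundle isomorphism (cf.\ Remark~\ref{rem:bi-grading} and the Remark following Theorem~\ref{theor:contact_Legendre}).

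Next I would show that $F$ intertwines the Reeb vector fields, that is, that $\calX_{F^\ast\lambda}\in\frakX(J^1[2]L_A)$ and $\calX_\lambda\in\frakX(J^1[2]L_{A^\dagger})$ are $F$-related for every homogeneous $\lambda\in\Gamma(\calL_{A^\dagger})$. By the uniqueness clause of Definition~\ref{def:Reeb_vf} it suffices to check that $F_\ast\calX_{F^\ast\lambda}$ satisfies the two conditions~\eqref{eq:def:Reeb_vf} defining $\calX_\lambda$ on $J^1[2]L_{A^\dagger}$. For the first, applying $F^\ast$ and using $F^\ast\widetilde\theta=\theta$ and $|F^\ast\lambda|=|\lambda|$ gives $F^\ast\big(\iota_{F_\ast\calX_{F^\ast\lambda}}\widetilde\theta\big)=\iota_{\calX_{F^\ast\lambda}}\theta=(-)^{|\lambda|}F^\ast\lambda=F^\ast\big((-)^{|\lambda|}\lambda\big)$, and injectivity of $F^\ast$ yields $\iota_{F_\ast\calX_{F^\ast\lambda}}\widetilde\theta=(-)^{|\lambda|}\lambda$. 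For the second, naturality of the Lie bracket under the diffeomorphism $F$ together with $F_\ast\Gamma(\calH)=\Gamma(\widetilde\calH)$ reduces $[F_\ast\calX_{F^\ast\lambda},\Gamma(\widetilde\calH)]\subset\Gamma(\widetilde\calH)$ to $[\calX_{F^\ast\lambda},\Gamma(\calH)]\subset\Gamma(\calH)$, which holds by definition of $\calX_{F^\ast\lambda}$. Plugging this into formula~\eqref{eq:canonical_Jacobi_structure}, and again using $F^\ast\widetilde\theta=\theta$ and degree preservation, I obtain that for all homogeneous $\lambda_1,\lambda_2\in\Gamma(\calL_{A^\dagger})$
\begin{equation*}
	F^\ast\{\lambda_1,\lambda_2\}=\{F^\ast\lambda_1,F^\ast\lambda_2\},
\end{equation*}
where on the left $\{-,-\}$ is the canonical Jacobi bracket of $\calL_{A^\dagger}\to J^1[2]L_{A^\dagger}$ and on the right that of $\calL_A\to J^1[2]L_A$; in other words $F^\ast$ is an isomorphism of the two canonical Jacobi structures.

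Finally I would invoke Lemma~\ref{lem:Hamiltonian_lift:degree1} itself. Since $L_{A^\dagger}\to A^\dagger[1]$ is again a line bundle generated in degree $0$ over a degree $1$ $\bbN$-manifold, that lemma applies verbatim and gives, on $J^1[2]L_{A^\dagger}$, the identities $\{h_\delta,h_{\delta^\prime}\}=-h_{[\delta,\delta^\prime]}$, $\{h_\delta,\widetilde\pi^\ast\lambda\}=-\widetilde\pi^\ast(\delta\lambda)$ and $\{\widetilde\pi^\ast\lambda,\widetilde\pi^\ast\lambda^\prime\}=0$ for $\delta,\delta^\prime\in\calD L_{A^\dagger}$ and $\lambda,\lambda^\prime\in\Gamma(L_{A^\dagger})$. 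Applying $F^\ast$ to each and using the intertwining property of the previous paragraph yields precisely the three claimed identities. I expect the only delicate part to be the graded bookkeeping in the Reeb-intertwining step — keeping straight that pullback of sections commutes with contraction and that $F$ is total-degree preserving — which is why I would isolate those two facts up front; everything else is formal. As a sanity check one can instead verify the identities by a direct computation in Darboux coordinates, feeding the explicit form~\eqref{eq:contact_Legendre} of $F$ into~\eqref{eq:canonical_Jacobi_structure:local_coordinates:degree1}, but the conceptual argument above makes this unnecessary.
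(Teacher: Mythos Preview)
Your proposal is correct and follows essentially the same approach as the paper: apply Lemma~\ref{lem:Hamiltonian_lift:degree1} to the line bundle $L_{A^\dagger}\to A^\dagger[1]$ and then transport the resulting identities along the Legendre transform of Theorem~\ref{theor:contact_Legendre}. The paper's proof is a one-line reference to these two results, whereas you have carefully unpacked the implicit step --- that the strict contactomorphism $F^\ast\widetilde\theta=\theta$ forces $F$ to intertwine Reeb vector fields and hence the canonical Jacobi brackets --- which is exactly what makes the transport work.
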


\begin{proof}
		Apply first Lemma~\ref{lem:Hamiltonian_lift:degree1} (to the line bundle $\calL_{A^\dagger}\to J^1[2]L_{A^\dagger}$) and then Theorem~\ref{theor:contact_Legendre}.
\end{proof}

\section{Split Courant--Jacobi algebroids as degree \texorpdfstring{$2$}{2} contact \texorpdfstring{$\bbN Q$}{NQ}-manifolds}
\label{sec:split-CJ_algebroids}

Roytenberg proved in \cite[Theorem 4.5]{Roytenberg2002gradedsymplectic} that there is a one-to-one correspondence between Courant algebroids $(E,\ldab-,-\rdab,\ldsb-,-\rdsb)$ and degree $2$ symplectic $\bbN Q$ manifolds $(\calM,\omega,X)$.
The cohomological vector field $X$ is Hamiltonian and it is given by $X=X_\Theta$ for some degree $3$ function $\Theta\in C^\infty(\calM)$ satisfying the MC equation $\{\Theta,\Theta\}=0$.
Here $\{-,-\}$ denotes the degree $-2$ non-degenerate Poisson structure on $\calM$ corresponding to the degree $2$ symplectic structure.
Similarly, there is a one-to-one correspondence between Courant--Jacobi algebroids and degree $2$ contact $\bbN Q$ manifolds $(\calM,\theta,X)$.
On the one hand, this extends what was done by Mehta~\cite{Mehta2013gradedcontactJacobi} for Jacobi manifolds and degree $1$ contact $\bbN Q$ manifolds.
On the other hand, it replicates what was done by Grabowski~\cite{Grabowski2013gradedcontactCourant} up to identifying contact $\bbN Q$ manifolds with symplectic $\bbN Q$ manifolds that are homogeneous w.r.t.~an additional principal $\bbR^\times$ bundle structure.

In this Section, since it will be crucial for our aims, we describe explicitly the degree $2$ contact $\bbN Q$ manifold corresponding to a split Courant--Jacobi algebroid (see~Theorem~\ref{theor:CJ_algebroid_as_MC-element}) and use it for constructing the associated curved $L_\infty$ algebra (see Theorem~\ref{theor:higher_derived_brackets:splitCJ}).

\subsection{Split Courant--Jacobi algebroids as degree $2$ contact $\bbN Q$-manifolds}
\label{sec:splitCJalgbd:MC_elements}

Let us start introducing the notion of \emph{split Courant--Jacobi algebroid}.
\begin{definition}
	\label{def:split_CJ_algebroid}
	A Courant--Jacobi algebroid $(A\oplus A^\dagger;L)$ with structure maps $(\ldab-,-\rdab,\ldsb-,-\rdsb,\nabla)$, is said to be a \emph{split} when the non-degenerate $L$-valued symmetric product on $A\oplus A^\dagger$ is given by
	\begin{equation}
		\label{eq:def:split_CJ_algebroid}
		\ldab(\xi_1+\alpha_1,\xi_2+\alpha_2\rdab=\alpha_1(\xi_2)+\alpha_2(\xi_1),
	\end{equation}
	for all $\xi_1+\alpha_1,\xi_2+\alpha_2\in\Gamma(A\oplus A^\dagger)$, so that, in particular, both $A$ and $A^\dagger$ are Lagrangian.
\end{definition}
In the rest of this section, where we only consider split Courant--Jacobi algebroids, the non-degenerate symmetric product $\ldab-,-\rdab$ will be always given by Equation~\eqref{eq:def:split_CJ_algebroid}.

\begin{remark}
	Let $A\to M$ be a vector bundle and $L\to M$ a line bundle.
	Sections of $A$ (resp.~of $A^\dagger$) can be identified, at the same time, with:
	\begin{itemize}
		\item degree $1$ sections of the line bundle $L_{A^\dagger}\to A^\dagger[1]$ (resp.~$L_A\to A[1]$) and
		\item degree $-1$ graded derivations of the line bundle $L_A\to A[1]$ (resp.~$L_{A^\dagger}\to A^\dagger[1]$) as follows
		\begin{equation*}
			\Gamma(A)\overset{\sim}{\to}(\calD L_A)^{-1},\ \xi\mapsto\iota_\xi,\qquad\Gamma(A^\dagger)\overset{\sim}{\to}(\calD L_{A^\dagger})^{-1},\ \alpha\mapsto\iota_\alpha.
		\end{equation*}
	\end{itemize}
	These two identifications are compatible in the sense that the following diagram commute
	\begin{equation}
		\label{eq:compatibility_embeddings}
		\begin{tikzcd}
			(\calD L_A)^{-1}\arrow[r, "h"]&\Gamma(\calL_A)^{(1,0)}\\
			\Gamma(A)=\Gamma(L_{A^\dagger})^1\arrow[u, "\iota_{(-)}"]\arrow[r, swap, "\widetilde\pi^\ast"]&\Gamma(\calL_{A^\dagger})^{(0,1)}\arrow[u, swap, "F^\ast"] 
		\end{tikzcd}
	\qquad\qquad
	\begin{tikzcd}
		(\calD L_{A^\dagger})^{-1}\arrow[r, "h"]&\Gamma(\calL_{A^\dagger})^{(1,0)}\arrow[d, "F^\ast"]\\
		\Gamma(A^\dagger)=\Gamma(L_A)^1\arrow[u, "\iota_{(-)}"]\arrow[r, swap, "\pi^\ast"]&\Gamma(\calL_A)^{(0,1)} 
	\end{tikzcd}
	\end{equation}
	Consequently, we can construct the following isomorphism
	\begin{align}
		\Gamma(A\oplus A^\dagger)=\Gamma(A)\oplus\Gamma(A^\dagger)&\hooklongrightarrow\Gamma(\calL_A)^{1,0}\oplus\Gamma(\calL_A)^{0,1}=\Gamma(\calL_A)^1,\nonumber\\
		\xi+\alpha&\longmapsto h_{\iota_\xi}+\pi^\ast\alpha=F^\ast\widetilde{\pi}^\ast \xi+F^\ast h_{\iota_\alpha}.
		\label{eq:degree_1_sections_calL}
	\end{align}
	and express the product~\eqref{eq:def:split_CJ_algebroid} on $\Gamma(A\oplus A^\dagger)$ in terms of the Jacobi bracket $\{-,-\}$ on $\Gamma(\calL_A)$ as follows
	\begin{equation*}
		\ldab u,v\rdab=-\{u,v\},
	\end{equation*}
	for all $u,v\in\Gamma(A\oplus A^\dagger)$, where we are understanding the isomorphism~\eqref{eq:degree_1_sections_calL}.
	In this entire section we will systematically understand this embedding~\eqref{eq:degree_1_sections_calL} of $\Gamma(A\oplus A^\dagger)$ as $\Gamma(\calL_A)^1$ inside $\Gamma(\calL_A)$.
\end{remark}

Now, as a specialization of the one-to-one correspondence between Courant--Jacobi algebroids and degree $2$ contact $\bbN Q$ manifolds, we establish a one-to-one correspondence between
\begin{itemize}
	\item split Courant--Jacobi algebroids $(A\oplus A^\dagger;L)$ and
	\item degree $2$ contact $\bbN Q$ manifolds $(J^1[2]L_A,\theta,X_\Theta)$.
\end{itemize}
Above $\theta$ is the canonical $\calL_A$-valued contact form on $J^1[2]L_A$ (cf.~Section~\ref{sec:degree_2_NQ-manifolds}), the cohomological contact vector field is Hamiltonian and so it is given by $X=X_\Theta$ for some MC element $\Theta$ of $(\Gamma(\calL_A)[2],\{-,-\})$, where $\{-,-\}$ is the canonical degree $-2$ non-degenerate Jacobi structure on $\calL_A\to J^1[2]L_A$.
Within the above one-to-one correspondence, the structure of split Courant--Jacobi algebroid on $(A\oplus A^\dagger;L)$ can be reconstructed via derived brackets from the corresponding $\Theta$ as detailed in the following.

\begin{theorem}
	\label{theor:CJ_algebroid_as_MC-element}
	There exists a canonical one-to-one correspondence between:
	\begin{itemize}
		\item split Courant--Jacobi algebroid structures $(\ldab-,-\rdab,\ldsb-,-\rdsb,\nabla)$ on $(A\oplus A^\dagger;L)$, and
		\item MC elements of $(\Gamma(\calL_A)[2],\{-,-\})$, i.e.~degree $3$ sections $\Theta$ of $\calL_A\to J^1[2]L_A$ s.t.~$\{\Theta,\Theta\}=0$. 
	\end{itemize}
	This one-to-one correspondence is established by the following two relations
	\begin{equation}
		\label{eq:theor:CJ_algebroid_as_MC-element}
		\ldsb u,v\rdsb=\{ \{u,\Theta\},v\},\quad \nabla_u\lambda=\{ \{u,\Theta\},\lambda\}\equiv\{\{\Theta,\lambda\},u\},
	\end{equation}
	for all $u,v\in\Gamma(A\oplus A^\dagger)$, and $\lambda\in\Gamma(L)$.
\end{theorem}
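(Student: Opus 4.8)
The plan is to carry out the Jacobi analogue of Roytenberg's identification of split Courant algebroids with Maurer--Cartan elements, using the bigrading of $\Gamma(\calL_A)$ and the four identification Propositions~\ref{prop:sections_bidegree_(0,k)}, \ref{prop:sections_bidegree_(1,k)}, \ref{prop:sections_bidegree_(k,0)}, \ref{prop:sections_bidegree_(k,1)} to translate $\Theta$ into classical data, and the graded Jacobi identity of $\{-,-\}$ to translate the equation $\{\Theta,\Theta\}=0$ into the axioms of Definition~\ref{def:Courant-Jacobi_algebroid}. Since the degree $-2$ non-degenerate Jacobi structure $\{-,-\}$ is a genuine graded Lie bracket on $\Gamma(\calL_A)$, the shifted module $\Gamma(\calL_A)[2]$ is a graded Lie algebra whose Maurer--Cartan elements are exactly the degree $3$ sections $\Theta$ with $\{\Theta,\Theta\}=0$. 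First I would decompose such a $\Theta$ along the bigrading, $\Gamma(\calL_A)^3=\Gamma(\calL_A)^{(0,3)}\oplus\Gamma(\calL_A)^{(1,2)}\oplus\Gamma(\calL_A)^{(2,1)}\oplus\Gamma(\calL_A)^{(3,0)}$, and identify the summands as $\Omega^3(A;L)$, $(\calD L_A)^1$, $(\calD L_{A^\dagger})^1$ and $\Omega^3(A^\dagger;L)$ respectively, so that $\Theta=\psi+\mu+\gamma+\phi$. By Remark~\ref{rem:vaintrob:Jacobi}, $\mu$ and $\gamma$ are (candidate) almost Jacobi algebroid structures on $(A;L)$ and $(A^\dagger;L)$, while $\psi,\phi$ are $L$-valued background $3$-forms; this is precisely the Jacobi version of the data of a proto-Lie-bialgebroid and matches the decomposition of a split Courant--Jacobi algebroid structure with the fixed pairing~\eqref{eq:def:split_CJ_algebroid}.

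Next I would check that, for an arbitrary degree $3$ section $\Theta$, the formulas~\eqref{eq:theor:CJ_algebroid_as_MC-element} are well posed. Since $\{-,-\}$ has bidegree $(-1,-1)$ and $\Gamma(\calL_A)^1\cong\Gamma(A\oplus A^\dagger)$ via~\eqref{eq:degree_1_sections_calL}, the element $\{\{u,\Theta\},v\}$ has degree $1$, hence lies in $\Gamma(A\oplus A^\dagger)$, and $\{\{u,\Theta\},\lambda\}$ has degree $0$, hence lies in $\Gamma(L)$; the stated equality $\{\{u,\Theta\},\lambda\}=\{\{\Theta,\lambda\},u\}$ follows from the graded Jacobi identity together with the vanishing $\{u,\lambda\}=0$ (again by degree). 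That $\nabla_u$ is a genuine derivation of $L$ and that $\ldsb-,-\rdsb$ satisfies the Leibniz rule~\eqref{eq:Leibniz} is a consequence of $\{-,-\}$ being first order in each argument, as one reads off from~\eqref{eq:canonical_Jacobi_structure:local_coordinates:degree1}; and the pairing determined by $\Theta$ is always the prescribed one, $\ldab u,v\rdab=-\{u,v\}$, i.e.~\eqref{eq:def:split_CJ_algebroid}.

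The core of the argument is to show that the Courant--Jacobi axioms hold if and only if $\{\Theta,\Theta\}=0$. The compatibility conditions~\eqref{eq:def:Courant-Jacobi_algebroids:product} and~\eqref{eq:def:Courant-Jacobi_algebroids:nabla}, in the equivalent form~\eqref{eq:proof:prop:Courant_tensor}, hold for \emph{every} degree $3$ section $\Theta$: expanding $\{\{e,\Theta\},\{e_1,e_2\}\}$ by the graded Jacobi identity yields them at once, with no use of the MC condition. It then remains to match the Loday identity~\eqref{eq:def:Courant-Jacobi_algebroids:Jacobi_identity} for $\ldsb-,-\rdsb$ with $\{\Theta,\Theta\}=0$. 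This is the standard higher-derived-bracket computation (Voronov, Kosmann-Schwarzbach): expanding $\ldsb e_1,\ldsb e_2,e_3\rdsb\rdsb=\{\{e_1,\Theta\},\{\{e_2,\Theta\},e_3\}\}$ repeatedly via the graded Jacobi identity of $\{-,-\}$ and collecting terms, the defect of~\eqref{eq:def:Courant-Jacobi_algebroids:Jacobi_identity} equals, up to sign, $\{\{\{\{\Theta,\Theta\},e_1\},e_2\},e_3\}$ under the identification~\eqref{eq:degree_1_sections_calL}. I expect this sign-and-degree bookkeeping — carried out in the $L_\infty[1]$ graded-symmetric conventions adopted in the paper, or deduced from the general derived-bracket statement recalled in Appendix~\ref{app:L_infty_algebras} — to be the only delicate point, although it is routine precisely because $\{-,-\}$ is a genuine graded Lie bracket; this shows that each Maurer--Cartan $\Theta$ produces via~\eqref{eq:theor:CJ_algebroid_as_MC-element} a split Courant--Jacobi algebroid structure.

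Finally I would prove bijectivity. For injectivity, two Maurer--Cartan elements $\Theta,\Theta'$ inducing the same $(\ldsb-,-\rdsb,\nabla)$ have difference $\Xi$ with $\{\{u,\Xi\},v\}=0$ and $\{\{u,\Xi\},\lambda\}=0$ for all $u,v\in\Gamma(A\oplus A^\dagger)$ and $\lambda\in\Gamma(L)$; iterating the non-degeneracy of $\{-,-\}$ (visible from~\eqref{eq:canonical_Jacobi_structure:local_coordinates:degree1}, noting that its restriction $\Gamma(\calL_A)^1\times\Gamma(\calL_A)^1\to\Gamma(L)$ is the perfect pairing $-\ldab-,-\rdab$) forces first $\{u,\Xi\}=0$ for all $u\in\Gamma(\calL_A)^1$ and then $\Xi=0$. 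For surjectivity, given a split Courant--Jacobi algebroid structure I would extract its components: restricting $\ldsb-,-\rdsb$ and $\nabla$ to $\Gamma(A)$ and projecting onto $A$ gives, by~\eqref{eq:Leibniz} and~\eqref{eq:def:Courant-Jacobi_algebroids:nabla}, an almost Jacobi algebroid structure $\mu$ on $(A;L)$; symmetrically one obtains $\gamma$ on $(A^\dagger;L)$; and the $\ldab-,-\rdab$-orthogonal, hence $C^\infty(M)$-linear, parts of $\ldsb\Gamma(A),\Gamma(A)\rdsb$ and $\ldsb\Gamma(A^\dagger),\Gamma(A^\dagger)\rdsb$ give $\psi\in\Omega^3(A;L)$ and $\phi\in\Omega^3(A^\dagger;L)$. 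Setting $\Theta:=\psi+\mu+\gamma+\phi$ under the identifications of the first step, a direct computation of the derived brackets in local adapted coordinates (the Jacobi version of the Courant-algebroid computation) shows that~\eqref{eq:theor:CJ_algebroid_as_MC-element} returns the given structure, and the previous step then forces $\{\Theta,\Theta\}=0$. Alternatively, once the full $L_\infty$ picture of Theorem~\ref{theor:higher_derived_brackets:splitCJ} is available this correspondence can be read off from the lowest-arity brackets, and one may also deduce it from Grabowski's general correspondence by making the identifications above explicit.
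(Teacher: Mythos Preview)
Your approach is essentially the paper's: decompose $\Theta$ along the bigrading into the quadruple $(\Upsilon_A,\rmd_{A,L},\rmd_{A^\dagger,L},\Upsilon_{A^\dagger})$ via Propositions~\ref{prop:sections_bidegree_(0,k)}--\ref{prop:sections_bidegree_(k,1)}, verify the derived-bracket formulas~\eqref{eq:theor:CJ_algebroid_as_MC-element} (the paper does this using Lemmas~\ref{lem:Hamiltonian_lift:degree1} and~\ref{lem:Hamiltonian_lift:Legendre} rather than local coordinates), and translate $\{\Theta,\Theta\}=0$ into the Courant--Jacobi axioms via the graded Jacobi identity.

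One point to tighten: in your ``core'' step you only match the Loday identity with the triple bracket $\{\{\{\{\Theta,\Theta\},e_1\},e_2\},e_3\}$, and then in the surjectivity step you say ``the previous step then forces $\{\Theta,\Theta\}=0$''. That inference needs more than the degree~$1$ tests. The paper also computes $\{\{\{\{\Theta,\Theta\},u\},v\},\lambda\}=2(\nabla_{\ldsb u,v\rdsb}\lambda-[\nabla_u,\nabla_v]\lambda)$ for $\lambda\in\Gamma(L)$; only the vanishing of \emph{both} iterated brackets, together with non-degeneracy of $\{-,-\}$, forces $\{\Theta,\Theta\}=0$. You already use exactly this pair of tests (against degree~$1$ and degree~$0$ elements) in your injectivity argument, so you have the right idea---just carry it over here as well.
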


\begin{proof}
	Given a split Courant--Jacobi algebroid structure $(\ldab-,-\rdab,\ldsb-,-\rdsb,\nabla)$ on $(A\oplus A^\dagger;L)$, let us show how to construct the only MC element $\Theta$ of $(\Gamma(\calL_A)[2],\{-,-\})$ satisfying condition~\eqref{eq:theor:CJ_algebroid_as_MC-element}.

	The vector bundles $A$ and $A^\dagger$ are almost Dirac--Jacobi structures in the split Courant--Jacobi algebroid $(A\oplus A^\dagger;L)$.
	So, by Proposition~\ref{prop:Courant_tensor}, one can construct their Courant--Jacobi tensors $\Upsilon_A\in\Omega^3(A;L)=\Gamma(L_A)^3$ and $\Upsilon_{A^\dagger}\in\Omega^3(A^\dagger;L)=\Gamma(L_{A^\dagger})^3$ by setting
	\begin{equation}
		\label{eq:phi_psi}
		\Upsilon_A(\xi_1,\xi_2,\xi_3)=\ldab\ldsb\xi_1,\xi_2\rdsb,\xi_3\rdab,\qquad\text{and}\qquad\Upsilon_{A^\dagger}(\alpha_1,\alpha_2,\alpha_3)=\ldab\ldsb\alpha_1,\alpha_2\rdsb,\alpha_3\rdab,
	\end{equation}
	for all $\xi_1,\xi_2,\xi_3\in\Gamma(A)$ and $\alpha_1,\alpha_2,\alpha_3\in\Gamma(A^\dagger)$.
	Moreover, the almost Dirac--Jacobi structures $A$ and $A^\dagger$ are complementary.
	Therefore, by Proposition~\ref{prop:almost_DJ_structure|Jacobi_algebroid}, $(A;L)$ and $(A^\dagger;L)$ become almost Jacobi algebroids with structures $([-,-]_A,\nabla^A)$ and $([-,-]_{A^\dagger},\nabla^{A^\dagger})$ given by
	\begin{equation*}
		[\xi,\zeta]_A=\pr_A\ldsb\xi,\zeta\rdsb,\ \ \nabla^A_\xi\lambda=\nabla_\xi\lambda,\qquad\text{and}\qquad[\alpha,\beta]_{A^\dagger}=\pr_{A^\dagger}\ldsb\alpha,\beta\rdsb,\ \ \nabla^{A^\dagger}_\alpha\lambda=\nabla_\alpha\lambda,
	\end{equation*}
	for all $\xi,\zeta\in\Gamma(A)$, $\alpha,\beta\in\Gamma(A^\dagger)$, $\lambda\in\Gamma(L)$.
	As pointed out in Remark~\ref{rem:vaintrob:Jacobi}, these almost Jacobi alge\-br\-oid structures are fully encoded into the corresponding degree $1$ graded derivation $\rmd_{A,L}\in(\calD L_A)^1$ and $\rmd_{A^\dagger,L}\in(\calD L_{A^\dagger})^1$ (see also Remark~\ref{rem:deRham_differential}).
	The identities~\eqref{eq:def:Courant-Jacobi_algebroids:product} and~\eqref{eq:def:Courant-Jacobi_algebroids:nabla} (or their rephrasing as Equation~\eqref{eq:proof:prop:Courant_tensor}) allow to rewrite $\ldsb-,-\rdsb$ and $\nabla$ in terms of the quadruple $\Upsilon_A, \rmd_{A,L}, \rmd_{A^\dagger,L}, \Upsilon_{A^\dagger}$ as follows
	\begin{equation}
	\label{eq:proof:theor:CJ_algebroid_as_MC-element:intermediate}
	\begin{aligned}
	\ldsb X+\alpha,Y+\beta\rdsb&=[X,Y]_A-\iota_\beta\rmd_{A^\dagger,L}X+\scrL_\alpha Y+\iota_\beta\iota_\alpha\Upsilon_{A^\dagger}+\iota_Y\iota_X\Upsilon_A+\scrL_X \beta-\iota_Y\rmd_{A,L}\alpha+[\alpha,\beta]_{A^\dagger},\\
	\nabla_{X+\alpha}\lambda&=(\rmd_{A,L}\lambda)X+(\rmd_{A^\dagger,L}\lambda)\alpha,
	\end{aligned}
	\end{equation}
	for all $X,Y\in\Gamma(A)$, $\alpha,\beta\in\Gamma(A^\dagger)$, $\lambda\in\Gamma(L)$, where use the Cartan calculus on $\Omega^\bullet(A;L)$ and $\Omega^\bullet(A^\dagger;L)$.
	
	Now we construct a degree $3$ section $\Theta\in\Gamma(\calL)^3$, out of the quadruple $\Upsilon_A,\rmd_{A,L},\rmd_{A^\dagger,L},\Upsilon_{A^\dagger}$, by setting
	\begin{equation}
	\label{eq:Theta}
	\Theta:=-\pi^\ast\Upsilon_A+h_{\rmd_{A,L}}+F^\ast h_{\rmd_{A^\dagger,L}}-F^\ast\widetilde{\pi}^\ast\Upsilon_{A^\dagger}.
	\end{equation}
	Using Lemmas~\ref{lem:Hamiltonian_lift} and~\ref{lem:Hamiltonian_lift:Legendre}, and the fact that the canonical Jacobi bracket $\{-,-\}$ on $\calL_A\to J^1[2]L_A$ has bidegree $(-1,-1)$, one can compute that, for all $X,Y\in\Gamma(A)$, $\alpha,\beta\in\Gamma(A^\dagger)$ and $\lambda\in\Gamma(L)$,
	\begin{align*}
	\{\{\Theta,X\},Y\}&=-\{\{\pi^\ast\Upsilon_A,h_{\iota_X}\},h_{\iota_Y}\}+\{\{h_{\rmd_A},h_{\iota_X}\},h_{\iota_Y}\}=\pi^\ast\iota_Y\iota_X\Upsilon_A+h_{\iota_{[X,Y]_A}},
	\\
	\{\{\Theta,X\},\beta\}&=\{\{h_{\rmd_{A,L}},h_{\iota_X}\},\pi^\ast\beta\}+\{\{F^\ast h_{\rmd_{A^\dagger,L}},F^\ast\widetilde{\pi}^\ast X\},F^\ast h_{\iota_\beta}\}=\pi^\ast\scrL_X\beta-F^\ast\widetilde{\pi}^\ast\iota_\beta\rmd_{A^\dagger,L}X,\\
	\{\{\Theta,\alpha\},Y\}&=\{\{h_{\rmd_{A,L}},\pi^\ast\alpha\},h_{\iota_Y}\}+\{\{F^\ast h_{\rmd_{A^\dagger,L}},F^\ast h_{\iota_\alpha}\},F^\ast\widetilde{\pi}^\ast Y\}=-\pi^\ast\iota_Y\rmd_{A,L}\alpha+F^\ast\widetilde{\pi}^\ast\scrL_\alpha Y,
	\\
	\{\{\Theta,\alpha\},\beta\}&=\{\{F^\ast h_{\rmd_{A^\dagger,L}},F^\ast h_{\iota_\alpha}\},F^\ast h_{\iota_\beta}\}-\{\{F^\ast\widetilde{\pi}^\ast\Upsilon_{A^\dagger},F^\ast h_{\iota_\alpha}\},F^\ast h_{\iota_\beta}\}=F^\ast\widetilde{\pi}^\ast\iota_\beta\iota_\alpha\Upsilon_{A^\dagger}+F^\ast h_{\iota_{[\alpha,\beta]_{A^\dagger}}},\\
	\{\{\Theta,X\},\lambda\}&=\{\{h_{\rmd_{A,L}},h_{\iota_X}\},\pi^\ast\lambda\}=\pi^\ast\scrL_X\lambda=\pi^\ast\iota_X\rmd_{A,L}\lambda,\\
	\{\{\Theta,\alpha\},\lambda\}&=\{\{F^\ast h_{\rmd_{A,L}},F^\ast h_{\iota_\alpha}\},F^\ast\widetilde{\pi}^\ast\lambda\}=F^\ast\widetilde\pi^\ast\scrL_\alpha\lambda=F^\ast\iota_\alpha\rmd_{A^\dagger,L}\lambda,
	\end{align*}
	where we also use again the Cartan calculus on $\Omega^\bullet(A;L)$ and $\Omega^\bullet(A^\dagger;L)$ and the embedding~\eqref{eq:degree_1_sections_calL}.
	By the latter and Equation~\eqref{eq:proof:theor:CJ_algebroid_as_MC-element:intermediate}, one can rewrite $\ldsb-,-\rdsb$ and $\nabla$ in terms of the $\Theta$ and $\{-,-\}$ as follows
	\begin{equation}
		\label{eq:proof:theor:CJ_algebroid_as_MC-element:final}
		\ldsb u,v\rdsb=\{\{\Theta,u\},v\}\quad\text{and}\quad\nabla_u\lambda=\{\{\Theta,u\},\lambda\}\equiv\{\{\Theta,\lambda\},u\},
	\end{equation}
	for all $u,v\in\Gamma(A\oplus A^\dagger)$ and $\lambda\in\Gamma(L)$.
	These computations show that Equation~\eqref{eq:Theta} gives the only $\Theta\in\Gamma(\calL_A)$ satisfying condition~\eqref{eq:theor:CJ_algebroid_as_MC-element}.
	Further, using the graded Jacobi identity of the Jacobi bracket $\{-,-\}$ and Equation~\eqref{eq:proof:theor:CJ_algebroid_as_MC-element:final}, one can compute that, for all $\lambda\in\Gamma(L)$ and $u,v,w\in\Gamma(A\oplus A^\dagger)$,
	\begin{align}
	\{\{\{\{\Theta,\Theta\},u\},v\},w\}
	&=2(\ldsb\ldsb u,v\rdsb, w\rdsb-\ldsb u,\ldsb v,w\rdsb\rdsb+\ldsb v,\ldsb u,w\rdsb\rdsb),
	\label{eq:proof:theor:CJ_algebroid_as_MC-element:MC-equation:1}
	\\
	\{\{\{\{\Theta,\Theta\},u\},v\},\lambda\}
	&=2(\nabla_{\ldsb u,v\rdsb}\lambda-[\nabla_{u},\nabla_{v}]\lambda).
	\label{eq:proof:theor:CJ_algebroid_as_MC-element:MC-equation:2}
	\end{align}
	Since the bi-derivation $\{-.-\}$ of $\calL_A\to J^1[2]L_A$ is non-degenerate, the latter show that the MC equation $\{\Theta,\Theta\}=0$ is equivalent to properties~\eqref{eq:def:Courant-Jacobi_algebroids:Jacobi_identity} and~\eqref{eq:flat} of $\ldsb-,-\rdsb$ and $\nabla$.
	
	As last step in the proof, given a MC element $\Theta$ of $(\Gamma(\calL_A)[2],\{-,-\})$, let us show that it arises, by the previous construction, from a unique split Courant--Jacobi algebroid structure $(\ldab-,-\rdab,\ldsb-,-\rdsb,\nabla)$ on $(A\oplus A^\dagger;L)$.
	Since $\{-,-\}$ is a degree $-2$ Jacobi structure on $\calL_A\to J^1[2]L_A$, condition~\eqref{eq:theor:CJ_algebroid_as_MC-element} defines a vector bundle morphism $\nabla:A\oplus A^\dagger\to DL$ over $\id_M$ and a bracket $\ldsb-,-\rdsb$ on $\Gamma(A\oplus A^\dagger)$ satisfying the identitites~\eqref{eq:def:Courant-Jacobi_algebroids:Jacobi_identity} and~\eqref{eq:flat}.
	Finally, by Equation~\eqref{eq:proof:theor:CJ_algebroid_as_MC-element:MC-equation:1}, the MC equation for $\Theta$ implies that the bracket $\ldsb-,-\rdsb$ satisfies the Jacobi identity.
\end{proof}

\subsection{The curved $L_\infty$ algebra of a split Courant--Jacobi algebroid}
\label{sec:splitCJalgbd:L_infty_algebra}

Let us keep considering a split Courant--Jacobi algebroid $(A\oplus A^\dagger;L)$.
Using the same notations of Theorem~\ref{theor:CJ_algebroid_as_MC-element} and its proof, the Courant--Jacobi algebroid structure is fully encoded into the corresponding MC element $\Theta$ of $(\Gamma(\calL_A)[2],\{-,-\})$ which decomposes according to the bidegree as follows
\begin{equation}
	\label{eq:Theta:bidegree_decomposition}
	\Theta=-\underbrace{\pi^\ast\Upsilon_A\vphantom{h_{\rmd_{A^\dagger,L}}}}_{(0,3)}
	+\underbrace{h_{\rmd_{A,L}}\vphantom{h_{\rmd_{A^\dagger,L}}}}_{(1,2)}
	+\underbrace{F^\ast h_{\rmd_{A^\dagger,L}}}_{(2,1)}
	-\underbrace{F^\ast\widetilde\pi^\ast\Upsilon_{A^\dagger}\vphantom{h_{\rmd_{A^\dagger,L}}}}_{(3,0)}.
\end{equation}
Consequently, the split Courant--Jacobi algebroid $(A\oplus A^\dagger;L)$ gets attached with a \emph{curved V-data} (see Definition~\ref{def:V-data} and also~\cite{CS} where the terminology V-data has been introduced for the first time).
\begin{lemma}
	\label{lem:V-data}
	Each split Courant--Jacobi algebroid is attached with the curved V-data formed by
	\begin{itemize}
		\item the graded Lie algebra $\frakg:=(\Gamma(\calL_A)[2],\{-,-\})$,
		\item its abelian Lie subalgebra $\fraka:=\Gamma(L_A)[2]=\Omega^\bullet(A;L)[2]$,
		\item the natural projection $P\colon\frakg\to\fraka$ given by the restriction to the zero section of $J^1[2]L_A\to A[1]$,
		\item the Maurer--Cartan element $-\Theta$ of $\frakg$. 
	\end{itemize}
\end{lemma}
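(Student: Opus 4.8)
The plan is to verify, one at a time, the four ingredients required by the definition of curved V-data (Definition~\ref{def:V-data}): a graded Lie algebra $\frakg$, an abelian Lie subalgebra $\fraka\subset\frakg$, a projection $P\colon\frakg\to\fraka$ whose kernel is a Lie subalgebra, and a degree $1$ element of $\frakg$ squaring to zero under the bracket. The last two data in the stated list have already been produced in the previous sections, so the argument amounts to assembling facts established earlier together with one short bidegree computation.

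First I would recall that $\{-,-\}$ is, by construction (Section~\ref{sec:shifted_1st_jet_bundle_N-mfld}), the canonical degree $-2$ non-degenerate Jacobi structure on $\calL_A\to J^1[2]L_A$; in particular it is graded skew-symmetric, satisfies the graded Jacobi identity, and lowers the total degree by $2$. Hence, after the shift, $\frakg=(\Gamma(\calL_A)[2],\{-,-\})$ is a graded Lie algebra with bracket of degree $0$. For the abelian subalgebra, I would use Proposition~\ref{prop:sections_bidegree_(0,k)} to identify $\Omega^\bullet(A;L)=\Gamma(L_A)^\bullet$ with the bidegree $(0,\bullet)$ part $\Gamma(\calL_A)^{(0,\bullet)}$ of $\frakg$ via $\lambda\mapsto\pi^\ast\lambda$, and then Lemma~\ref{lem:Hamiltonian_lift:degree1}, which gives $\{\pi^\ast\lambda,\pi^\ast\lambda'\}=0$; so $\fraka:=\Gamma(L_A)[2]$ is an abelian Lie subalgebra of $\frakg$.

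Next I would identify the projection. Restriction to the zero section of $J^1[2]L_A\to A[1]$ sends a section $f\otimes\pi^\ast\mu$ of $\calL_A$ to its value at $p=p_i=p_a=0$, i.e.~it kills every monomial containing a momentum and keeps the momentum-free part; in terms of the bigrading of Remark~\ref{rem:bi-grading} this is precisely the projection onto the bidegree $(0,\bullet)$ summand, so $P$ is a well-defined projection with $P|_{\fraka}=\id_{\fraka}$. Its kernel is $\ker P=\bigoplus_{\epsilon\geq 1}\Gamma(\calL_A)^{(\epsilon,\bullet)}$, and since $\{-,-\}$ has bidegree $(-1,-1)$ one has $\{\Gamma(\calL_A)^{(\epsilon_1,\delta_1)},\Gamma(\calL_A)^{(\epsilon_2,\delta_2)}\}\subset\Gamma(\calL_A)^{(\epsilon_1+\epsilon_2-1,\,\delta_1+\delta_2-1)}$, so $\epsilon_1,\epsilon_2\geq 1$ forces $\epsilon_1+\epsilon_2-1\geq 1$; thus $\ker P$ is a graded Lie subalgebra of $\frakg$.

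Finally, Theorem~\ref{theor:CJ_algebroid_as_MC-element} furnishes the degree $3$ section $\Theta\in\Gamma(\calL_A)^3$ with $\{\Theta,\Theta\}=0$; in $\frakg=\Gamma(\calL_A)[2]$ this is an element of degree $1$, and since the Maurer--Cartan equation is quadratic, $-\Theta$ is also a Maurer--Cartan element of $\frakg$. Altogether $(\frakg,\fraka,P,-\Theta)$ is a V-data; it is only \emph{curved} because, by the bidegree decomposition~\eqref{eq:Theta:bidegree_decomposition}, $P(-\Theta)=\pi^\ast\Upsilon_A$, which need not vanish. The whole verification is routine; the only point deserving a little care is the identification of $P$ with the $(0,\bullet)$-projection and the resulting check that $\ker P$ is closed under $\{-,-\}$, which is exactly where the bidegree $(-1,-1)$ of the Jacobi bracket enters.
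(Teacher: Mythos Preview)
Your proof is correct and follows essentially the same approach as the paper: both verify the four V-data axioms by exploiting the $\bbN\times\bbN$ bigrading on $\Gamma(\calL_A)$ and the fact that $\{-,-\}$ has bidegree $(-1,-1)$, identifying $\fraka$ with $\Gamma(\calL_A)^{(0,\bullet)}$, $P$ with the projection onto this summand, and $\ker P$ with the part of $\epsilon$-degree $\geq 1$. The only cosmetic difference is that you cite Lemma~\ref{lem:Hamiltonian_lift:degree1} to show $\fraka$ is abelian, whereas the paper argues directly by bidegree (the bracket would land in $\epsilon$-degree $-1$, hence vanish); you also spell out explicitly why the V-data is only \emph{curved}, namely $P(-\Theta)=\pi^\ast\Upsilon_A$, which the paper leaves implicit.
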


\begin{proof}
	Since $\{-,-\}$ is a degree $-2$ Jacobi structure on the line bundle $\calL_A\to J^1[2]L_A$, it turns out that $(\Gamma(\calL_A)[2],\{-,-\})$ is a graded Lie algebra.
	Recall that, specifically, $\{-,-\}$ has bidegree $(-1,-1)$ wrt the $\bbN\times\bbN$ bigrading of the line bundle $\calL_A\to J^1[2]L_A$.
	From Proposition~\ref{prop:sections_bidegree_(0,k)}, we know that $\fraka:=\Gamma(L_A)^\bullet=\Omega^\bullet(A;L)$ identifies with $\Gamma(\calL_A)^{\bullet,0}\subset \Gamma(\calL_A)$, the graded sapce of those sections that are constant along the fibers of $J^1[2]L_A\to A[1]$.
	Consequently, by bidegree reasons, one gets
	\begin{equation*}
		\{\Gamma(\calL_A)^{0,\bullet},\Gamma(\calL_A)^{0,\bullet}\}=0,
	\end{equation*}
	so $\fraka$ is an abelian Lie subalgebra of $\fraka$.
	The restriction to the zero section of $J^1[2]L_A\to A[1]$ simply gives the natural projection $P:\Gamma(\calL_A)^{\bullet,\bullet}\rightarrow\Gamma(\calL_A)^{0,\bullet}$ wrt the direct sum decomposition~\eqref{eq:bi-degree:direct_sum}.
	Hence, $\ker P\subset\Gamma(\calL_A)$ consists of the sections vanishing on $A[1]$, i.e.~those sections with only components of bidegree $(\epsilon,\delta)$ with $\epsilon\geq 1$.
	Consequently, by bidegree reasons, one gets
	\begin{equation*}
		\{\Gamma(\calL_A)^{\geq 1,\bullet},\Gamma(\calL_A)^{\geq 1,\bullet}\}\subset\Gamma(\calL_A)^{\geq 1,\bullet},
	\end{equation*}
	so $\ker P\subset\frakg$ is a Lie subalgebra.
	Finally, $\Theta$ is a MC element of $(\Gamma(\calL_A)[2],\{-,-\})$ by Theorem~\ref{theor:CJ_algebroid_as_MC-element}, and this shows that the given quadruple forms a set of curved V-data according to Definition~\ref{def:V-data}.
\end{proof}

In the next Theorem~\ref{theor:higher_derived_brackets:splitCJ}, applying Voronov's higher derived brackets technique~\cite{voronov2005higher} to the curved V-data from Lemma~\ref{lem:V-data}, we construct the \emph{curved $L_\infty[1]$-algebra of the split Courant--Jacobi algebroid $(A\oplus A^\dagger;L)$}.
This is actually $\fraka=\Omega^\bullet(A;L)[2]$ equipped with an $L_\infty[1]$-algebra structure $\{\frakm_k\}_{k\in\bbN}$ whose multibrackets, degree $1$ graded symmetric maps $\frakm_k:{\sf S}^k\fraka\to\fraka$, are expressed in terms of the geometry of the split Courant--Jacobi algebroid $(A\oplus A^\dagger;L)$ as detailed in Theorem~\ref{theor:higher_derived_brackets:splitCJ}.
So that, in particular, $\frakm_k=0$ for all $k>3$
Additionally, we also show that this $L_\infty[1]$ algebra structure makes the graded $\Omega^\bullet(A)$-module $\Omega^\bullet(A;L)[2]$ into an \emph{$LR_\infty[1]$ algebra} (see~\cite{vitagliano2014foliation}, and references therein, for the definition of the $LR_\infty$ algebras also called homotopy Lie--Rinehart algebra).
However, before we can state Theorem~\ref{theor:higher_derived_brackets:splitCJ}, we need to preliminarily introduce some notation.

\begin{remark}
	For any $\omega\in\Omega^{k+1}(A)$ and $\alpha\in\Omega^{k+1}(A;L)$, the VB-morphisms $\omega^\sharp\colon A\to\wedge^k A^\ast$ and $\alpha^\sharp\colon A\otimes L^\ast\to\wedge^kA^\ast$, over $\id_M$, are defined by
\begin{equation*}
	(\omega^\sharp u_0)(u_1\wedge\ldots\wedge u_k)=\omega(u_0\wedge\ldots\wedge u_k)\quad\text{and}\quad\alpha^\sharp (u_0\otimes\nu)(u_1\wedge\ldots\wedge u_k)=\nu(\alpha(u_0\wedge\ldots\wedge u_k)),
\end{equation*}
for all $u_0,u_1,\ldots,u_k\in\Gamma(A)$ and $\nu\in\Gamma(L^\ast)$.
In particular, $\omega^\sharp=\alpha^\sharp=0$ if $k=-1$, while $\omega^\sharp=\omega$ and $\alpha^\sharp=\alpha$ if $k=0$.
Further, given $\alpha_1\in\Omega^{k_1+1}(A;L),\ldots,\alpha_n\in\Omega^{k_n+1}(A;L)$, one can define the VB morphism $\alpha_1^\sharp\wedge\ldots\wedge \alpha_n^\sharp:\wedge^n (A\otimes L^\ast)\to\wedge^{k_1+\ldots+k_n}A^\ast$ so that
\begin{equation*}
	\label{eq:sharp:property1}
	(\alpha_1^\sharp\wedge\ldots\wedge \alpha_n^\sharp)(\widetilde{u}_1\wedge\ldots\wedge \widetilde{u}_n)=\sum_{\sigma\in S_n}(-)^\sigma \alpha_1^\sharp(\widetilde{u}_{\sigma(1)})\wedge\ldots\wedge \alpha_n^\sharp(\widetilde{u}_{\sigma(n)}),
\end{equation*}
for all $\widetilde{u}_1,\ldots,\widetilde{u}_n\in \Gamma(A\otimes L^\ast)$.
Additionally, for any $\omega\in\Omega^{h+1}(A)$ and $\alpha\in\Omega^{k+1}(A;L)$, we define the VB morphisms $\omega^\sharp\wedge\alpha,\ \omega\wedge \alpha^\sharp:A\otimes L^\ast\to\wedge^{h+k+1}A^\ast$ by setting, for all $u\in\Gamma(A)$ and $\widetilde{u}\in\Gamma(A\otimes L^\ast)$,
\begin{equation*}
	(\omega^\sharp\wedge\alpha)u=(\omega^\sharp u)\wedge\alpha\quad\text{and}\quad(\omega\wedge\alpha^\sharp)\widetilde{u}=\omega\wedge(\alpha^\sharp\widetilde{u}).
\end{equation*}
Similarly, one can also define the VB morphisms $\omega_1^\sharp\wedge\eta,\ \omega\wedge \eta^\sharp:A\to\wedge^{h+k+1}A^\ast$, for all $\omega\in\Omega^{h+1}(A)$ and $\eta\in\Omega^{k+1}(A)$.
Consequently, the following Leibniz-like identities hold
\begin{equation*}
	\label{eq:sharp:property2}
	(\omega\eta)^\sharp=\omega^\sharp\wedge\eta+(-)^{|\omega|}\omega\wedge\eta^\sharp\quad\text{and}\quad(\omega\alpha)^\sharp=\omega^\sharp\wedge\alpha+(-)^{|\omega|}\omega\wedge\alpha^\sharp,
\end{equation*}
for all homogeneous $\omega,\eta\in\Omega^\bullet(A)$ and $\alpha\in\Omega^\bullet(A;L)$.
\end{remark}

Now we can construct the \emph{$LR_\infty[1]$ algebra of the split Courant--Jacobi algebroid $(A\oplus A^\dagger;L)$}.

\begin{theorem}
	\label{theor:higher_derived_brackets:splitCJ}
	A split Courant--Jacobi algebroid $(A\oplus A^\dagger;L)$ determines the curved $LR_\infty[1]$ algebra structure on the $\Omega^\bullet(A)$-module $\Omega^\bullet(A;L)[2]$ whose only non-trivial brackets are $\frakm_0,\frakm_1,\frakm_2$, and $\frakm_3$.
	\begin{enumerate}[label=(\arabic*)]
		\item
		\label{enumitem:prop:higher_derived_brackets:0}
		The $0$-ary bracket $\frakm_0$ is given by $\Upsilon_A\in\Omega^3(A;L)$.
		\item
		\label{enumitem:prop:higher_derived_brackets:1}
		The unary bracket $\frakm_1$ is given by $\rmd_{A,L}:\Omega^\bullet(A;L)\to\Omega^\bullet(A;L)$.
		\item
		\label{enumitem:prop:higher_derived_brackets:2}
		The binary bracket $\frakm_2$ is given by, for all homogeneous $\alpha,\beta\in\Omega^\bullet(A;L)$,
		\begin{equation}
			\label{eq:prop:higher_derived_brackets:2}
			\frakm_2(\alpha,\beta)=(-)^{|\alpha|}\ldsb\alpha,\beta\rdsb_{A^\dagger,L},
		\end{equation}
		\item
		\label{enumitem:prop:higher_derived_brackets:3}
		The ternary bracket $\frakm_3$ is expressed as follows in terms of $\Upsilon_{A^\dagger}$,
		\begin{equation}
			\label{eq:prop:higher_derived_brackets:3}
			\frakm_3(\alpha,\beta,\gamma)=-(-)^{|\beta|}(\alpha^\sharp\wedge\beta^\sharp\wedge\gamma^\sharp\otimes\id_L)\Upsilon_{A^\dagger},
		\end{equation}
		for all homogeneous $\alpha,\beta,\gamma\in\Omega^\bullet(A;L)$.
	\end{enumerate}
\end{theorem}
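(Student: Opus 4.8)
The plan is to identify the multibrackets $\frakm_k$ with the Voronov higher derived brackets~\cite{voronov2005higher} (see Appendix~\ref{app:L_infty_algebras}) of the curved V-data $(\frakg,\fraka,P,-\Theta)$ attached to $(A\oplus A^\dagger;L)$ in Lemma~\ref{lem:V-data}, and then to extract their explicit form from the bidegree decomposition~\eqref{eq:Theta:bidegree_decomposition} of $\Theta$. Concretely, that construction equips $\fraka=\Omega^\bullet(A;L)[2]\cong\Gamma(\calL_A)^{(0,\bullet)}$ (via $\pi^\ast$, cf.~Proposition~\ref{prop:sections_bidegree_(0,k)}) with a curved $L_\infty[1]$ algebra structure whose $k$-th bracket is, up to the sign conventions fixed in the appendix,
\begin{equation*}
	\frakm_k(a_1,\dots,a_k)=P\{\{\cdots\{-\Theta,a_1\},a_2\},\dots,a_k\},\qquad a_1,\dots,a_k\in\fraka .
\end{equation*}
Everything then reduces to bookkeeping of the $\bbN\times\bbN$ bidegree.

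First I would prove $\frakm_k=0$ for $k\geq 4$: since $\{-,-\}$ has bidegree $(-1,-1)$ and every element of $\fraka$ has bidegree $(0,\bullet)$, the iterated bracket in $\frakm_k$ has $\delta$-degree at most $\delta(\Theta)-k\leq 3-k<0$, hence vanishes before $P$ is applied. The analogous computation of the $\epsilon$-degree shows that for $k\leq 3$ only the summand of $\Theta$ of $\epsilon$-degree $k$ in~\eqref{eq:Theta:bidegree_decomposition} contributes to $\frakm_k$, namely the $(0,3)$-, $(1,2)$-, $(2,1)$-, and $(3,0)$-terms for $\frakm_0,\frakm_1,\frakm_2,\frakm_3$ respectively. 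Thus $\frakm_0=P(-\Theta)=\pi^\ast\Upsilon_A$, which is $\Upsilon_A$ under the above identification, proving~\ref{enumitem:prop:higher_derived_brackets:0}; and for~\ref{enumitem:prop:higher_derived_brackets:1}, only $-h_{\rmd_{A,L}}$ survives, and since $\rmd_{A,L}\in(\calD L_A)^1$ (Remark~\ref{rem:vaintrob:Jacobi}), Lemma~\ref{lem:Hamiltonian_lift:degree1} gives $\frakm_1(a)=-P\{h_{\rmd_{A,L}},\pi^\ast a\}=\pi^\ast(\rmd_{A,L}a)$, i.e.~$\frakm_1=\rmd_{A,L}$.

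For~\ref{enumitem:prop:higher_derived_brackets:2} one is left with $\frakm_2(a,b)=-P\{\{F^\ast h_{\rmd_{A^\dagger,L}},a\},b\}$, a derived bracket of the de Rham differential $\rmd_{A^\dagger,L}\in(\calD L_{A^\dagger})^1$ of the almost Jacobi algebroid $(A^\dagger;L)$. My strategy here is to invoke the uniqueness clause of Remark~\ref{rem:GJ_bracket}: using Lemma~\ref{lem:Hamiltonian_lift:Legendre} together with the compatibility squares~\eqref{eq:compatibility_embeddings} one checks that $(\alpha,\beta)\mapsto(-)^{|\alpha|}\frakm_2(\alpha,\beta)$ is a graded derivation of the $\Omega^\bullet(A)$-module $\Omega^\bullet(A^\dagger;L)[1]\simeq\Omega^\bullet(A;L)[1]$ in each entry --- the first-order property being inherited from $\{-,-\}$ --- and that it restricts on the generators $\Gamma(L)$ and $\Gamma(A^\dagger)=\Omega^1(A^\dagger;L)$ to $\nabla^{A^\dagger}$ and $[-,-]_{A^\dagger}$; these last two identities are exactly the computations of $\{\{\Theta,\alpha\},\lambda\}$ and $\{\{\Theta,\alpha\},\beta\}$ already performed in the proof of Theorem~\ref{theor:CJ_algebroid_as_MC-element}. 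Uniqueness then forces $\frakm_2(\alpha,\beta)=(-)^{|\alpha|}\ldsb\alpha,\beta\rdsb_{A^\dagger,L}$. For~\ref{enumitem:prop:higher_derived_brackets:3}, only $F^\ast\widetilde\pi^\ast\Upsilon_{A^\dagger}$ contributes, so $\frakm_3(\alpha,\beta,\gamma)=P\{\{\{F^\ast\widetilde\pi^\ast\Upsilon_{A^\dagger},\alpha\},\beta\},\gamma\}$; extending the double-bracket computations from the proof of Theorem~\ref{theor:CJ_algebroid_as_MC-element} by one further bracket, via the local Darboux formula~\eqref{eq:canonical_Jacobi_structure:local_coordinates:degree1} and the $\sharp$-calculus introduced above, yields the stated $\sharp$-contraction, the sign $-(-)^{|\beta|}$ being pinned down by matching the $(-)^{|f_1|}$ in~\eqref{eq:canonical_Jacobi_structure:local_coordinates:degree1} against the décalage signs.

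Finally I would upgrade this $L_\infty[1]$ structure to an $LR_\infty[1]$ structure over $\Omega^\bullet(A)=C^\infty(A[1])$. Since the canonical Jacobi bracket $\{-,-\}$ on $\calL_A$ is a first-order bidifferential operator over $C^\infty(J^1[2]L_A)$ whose restriction along the zero section is compatible with the subalgebra $C^\infty(A[1])$ in the appropriate way, each derived bracket $\frakm_k$ is automatically a multiderivation (with the anchors induced by $\rho^{A^\dagger}=\sigma\circ\nabla|_{A^\dagger}$) of the graded $\Omega^\bullet(A)$-module $\Omega^\bullet(A;L)[2]$, which together with the $L_\infty[1]$ identities already obtained is precisely what defines an $LR_\infty[1]$ algebra (cf.~\cite{vitagliano2014foliation}). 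I expect the genuinely delicate point to be not the vanishing nor $\frakm_0,\frakm_1$, but the sign bookkeeping in~\ref{enumitem:prop:higher_derived_brackets:2}--\ref{enumitem:prop:higher_derived_brackets:3} --- reconciling Voronov's conventions, the décalage between the shifts $[1]$ and $[2]$, and the graded-symmetry conventions for the $\sharp$-maps --- together with spelling out cleanly the anchors that make the $LR_\infty[1]$ axioms hold.
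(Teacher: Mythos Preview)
Your proposal is correct and follows essentially the same route as the paper: apply Voronov's higher derived brackets to the curved V-data of Lemma~\ref{lem:V-data}, use the bidegree decomposition~\eqref{eq:Theta:bidegree_decomposition} of $\Theta$ together with the bidegree $(-1,-1)$ of $\{-,-\}$ to isolate which summand feeds each $\frakm_k$, deduce the multiderivation ($LR_\infty[1]$) property from the biderivation property of $\{-,-\}$ and the module-morphism property of $P$, and then pin down $\frakm_2,\frakm_3$ by checking on generators in $\Gamma(L)$ and $\Gamma(A^\dagger)$ via Lemmas~\ref{lem:Hamiltonian_lift:degree1}--\ref{lem:Hamiltonian_lift:Legendre} (your appeal to the uniqueness clause of Remark~\ref{rem:GJ_bracket} is exactly this). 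One small slip: in your vanishing argument for $k\geq 4$ the degree that drops by $1$ at each step is the $\epsilon$-degree, not the $\delta$-degree (elements of $\fraka$ have $\epsilon=0$ but arbitrary $\delta$), though your very next sentence already contains the correct $\epsilon$-degree argument, which in fact subsumes the first.
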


\begin{proof}
	By Voronov's technique~\cite{voronov2005higher}, the curved V-data in Lemma~\ref{lem:V-data} determine the curved $L_\infty[1]$-algebra structure $\{\frakm_k\}_{k\in\bbN}$ on $\fraka=\Omega^\bullet(A;L)[2]$ given as the following higher derived brackets
	\begin{equation}
		\label{eq:higher_derived_brackets}
		\frakm_k(\alpha_1,\ldots,\alpha_k)=-P\{\ldots\{\{\Theta,\alpha_1\},\alpha_2\},\ldots,\alpha_k\}, 
	\end{equation}
	for all $k\in\bbN$, and $\alpha_1,\ldots,\alpha_k\in\Omega^\bullet(A;L)$.
	Clearly, the latter are derivations of the graded $\Omega^\bullet(A)$-module $\Omega^\bullet(A;L)$ in each entry separately.
	Indeed,  $\{-,-\}$ is a bi-derivation of the $\Omega^\bullet(A)$-module $\Omega^\bullet(A;L)$ and the projection $P\colon \Gamma(\calL_A)^\bullet\to\Gamma(\calL_A)^{0,\bullet}\simeq\Omega^\bullet(A;L)$ is a graded module morphism covering a graded algebra morphism $\underline{P}\colon C^\infty(J^1[2]L_A)^\bullet\to C^\infty(J^1[2]L_A)^{0,\bullet}\simeq\Omega^\bullet(A)$ (cf., e.g., \cite[Remark 2.11]{le2017bfv} for the definition of a module morphism covering an algebra morphism).
	
	Recall that $\Theta\in\Gamma(\calL_A)$ has total degree $3$ with bidegree decomposition~\eqref{eq:Theta:bidegree_decomposition}, the degree $-2$ Jacobi bracket $\{-,-\}$ on $\calL_A\to J^1[2]L_A$ has bidegree $(-1,-1)$, Proposition~\ref{prop:sections_bidegree_(0,k)} identifies $\Omega^\bullet(A;L)=\Gamma(L_A)$ with $\Gamma(\calL_A)^{0,\bullet}$, and $P:\Gamma(\calL_A)^{\bullet,\bullet}\rightarrow\Gamma(\calL_A)^{0,\bullet}$ is the projection wrt the direct sum decomposition~\eqref{eq:bi-degree:direct_sum}.
	Therefore, one can easily compute:
	\begin{align}
		\frakm_0&=\pi^\ast\Upsilon_A,\\
		\label{eq:prop:higher_derived_brackets:1:bis}
		\frakm_1(\alpha)&=-\{h_{\rmd_{A,L}},\pi^\ast\alpha\}=\pi^\ast(\rmd_{A,L}\alpha),\\
		\label{eq:prop:higher_derived_brackets:2:bis}
		\frakm_2(\alpha,\beta)&=-\{\{F^\ast h_{\rmd_{A^\dagger,L}},\pi^\ast\alpha\},\pi^\ast\beta\},\\
		\label{eq:prop:higher_derived_brackets:3:bis}
		\frakm_3(\alpha,\beta,\gamma)&=\{\{\{F^\ast\widetilde{\pi}^\ast\Upsilon_{A^\dagger},\pi^\ast\alpha\},\pi^\ast\beta\},\pi^\ast\gamma\},\\
		\frakm_k&=0\quad\text{(for all $k\geq 4$)},
	\end{align}
	for all $\alpha,\beta,\gamma\in\Omega^\bullet(A;L)=\Gamma(L_A)^\bullet$.
	Above, in Equation~\eqref{eq:prop:higher_derived_brackets:1:bis}, we have also used Lemma~\ref{lem:Hamiltonian_lift:degree1}.
	
	Now it only remains to check that the RHS of Equations~\eqref{eq:prop:higher_derived_brackets:2:bis} and~\eqref{eq:prop:higher_derived_brackets:3:bis} agree with  respectively the RHS of Equations~\eqref{eq:prop:higher_derived_brackets:2} and~\eqref{eq:prop:higher_derived_brackets:3}.
	However, since the latter are all graded symmetric multi-derivations of graded $\Omega^\bullet(A)$-module $\Omega^\bullet(A;L)[2]$, it is enough to check that they agree on the components of degree $0$ and $1$ of $\Omega^\bullet(A;L)$.
	Actually, one can easily check that:
	\begin{align*}
		\frakm_2(\alpha,\lambda)&=-\{\{F^\ast h_{\rmd_{A^\dagger}},F^\ast h_{\iota_\alpha}\},F^\ast\widetilde{\pi}^\ast\lambda\}=-F^\ast \widetilde{\pi}^\ast\calL_\alpha\lambda=-\pi^\ast\nabla^{A^\dagger}_\alpha\lambda,\\
		\frakm_2(\alpha,\beta)&=-\{\{F^\ast h_{\rmd_{A^\dagger}},F^\ast h_{\iota_\alpha}\},F^\ast h_{\iota_\beta}\}=-F^\ast h_{[\scrL_\alpha,\iota_\beta]}=-F^\ast h_{[\alpha,\beta]_{A^\dagger}}=-\pi^\ast[\alpha,\beta]_{A^\dagger},\\
		\frakm_3(\alpha,\beta,\gamma)&=\{\{\{F^\ast\widetilde{\pi}^\ast\Upsilon_{A^\dagger},F^\ast h_{\iota_\alpha}\},F^\ast h_{\iota_\beta}\},F^\ast h_{\iota_\gamma}\}=F^\ast\widetilde{\pi}^\ast (\iota_\gamma\iota_\beta\iota_\alpha\Upsilon_{A^\dagger})=\pi^\ast(\iota_\gamma\iota_\beta\iota_\alpha\Upsilon_{A^\dagger}),
	\end{align*}
	for all $\alpha,\beta,\gamma\in\Gamma(A^\dagger)$ and $\lambda\in\Gamma(L)$, where we have used Lemma~\ref{lem:Hamiltonian_lift:Legendre}, the compatility conditions~\eqref{eq:compatibility_embeddings} and the Cartan calculus on $\Omega^\bullet(A^\dagger;L)$.
	The latter shows that Equations~\eqref{eq:prop:higher_derived_brackets:2} and~\eqref{eq:prop:higher_derived_brackets:3} hold.
\end{proof}

\begin{remark}
	\label{rem:joana_paulo}
	As privately pointed out to me by Paulo Antunes and Joana Nunes da Costa, the same arguments used in~\cite[Theorems 4.1 and 4.3]{antunes2020split} can be easily adapted to prove that the construction from Theorem~\ref{theor:higher_derived_brackets:splitCJ} establishes a one-to-one correspondence between:
	\begin{itemize}
		\item split Courant--Jacobi algebroid structures on $(A\oplus A^\dagger;L)$ and
		\item curved $LR_\infty$ algebra structures $\frakm_0,\frakm_1,\frakm_2,\frakm_3$ on $\Omega^\bullet(A;L)[2]$.
	\end{itemize}
\end{remark}

\begin{remark}
	Since $\Omega^\bullet(A)$ is the graded algebra of functions on the graded manifold $A[1]$ and  $\Omega^\bullet(A;L)$ is the graded module of sections of the graded line bundle $L_A\to A[1]$, the $LR_\infty[1]$ algebra of a split Courant--Jacobi algebroid $(A\oplus A^\dagger;L)$ constructed in Theorem~\ref{theor:higher_derived_brackets:splitCJ} is also an instance of a \emph{Jacobi structure up to homotopy}, and equivalently a \emph{Kirillov structure up to homotopy}~\cite{bruce2016kirillov}. 
\end{remark}

\section{\texorpdfstring{The $L_\infty$ Algebra and the Deformation Problem of a Dirac--Jacobi Structure}{The L-infty Algebra and the Deformation Problem of Dirac--Jacobi Structure}}
\label{sec:deformation_theory:DJ}

In this final section, we address the deformation problem of a  Dirac--Jacobi structure $A$ within a fixed Courant--Jacobi algebroid $(E;L)$.
First, we construct the \emph{deformation $L_\infty$ algebra} of $A$, which is an $L_\infty$ algebra unique up to $L_\infty$ isomorphisms.
Specifically, for each complementary almost Dirac--Jacobi structure $B$, Theorem~\ref{theor:deformation_L_infty_algebra} constructs a cubic $L_\infty$ algebra and Theorem~\ref{theor:GMS} shows that different choices of $B$ give rise to canonically isomorphic $L_\infty$ algebras.
Further, in Theorem~\ref{theor:DJ_deformation:MC-elements}, we prove that this $L_\infty$ algebra actually controls the deformation problem of the Dirac--Jacobi structure $A$ within $(E;L)$.
In conclusion, we also identify the infinitesimal deformations of a Dirac--Jacobi structure $A$ and find sufficient criteria for the existence of obstructions.

\subsection{The Deformation $L_\infty$ Algebra of a Dirac--Jacobi Structure}
\label{subsection:CJ_algebroids_and_DJ_structures:deformation_theory}

Let $(E;L)$ be a Courant--Jacobi algebroid, over a manifold $M$, and let $A\subset E$ be a Dirac--Jacobi structure.
Assume to have chosen an almost Dirac--Jacobi structure $B\subset E$ complementary to $A$, so that $E=A\oplus B$ with both $A^\perp=A$ and $B=B^\perp$.
Then the non-degenerate $L$-valued symmetric product $\ldab-,-\rdab$ on $E$ induces the following VB isomorphism over $\id_M$
\begin{gather}
	\label{eq:VB-iso_A_B^ast}
	B\overset{\sim}{\longrightarrow} A^\dagger,\ u\longmapsto\ldab u,-\rdab|_A,\\
	\label{eq:VB-iso_A+A^ast}
	\phi_B:E=A\oplus B\overset{\sim}{\longrightarrow} A\oplus A^\dagger,\ u+v\longmapsto u+\ldab v,-\rdab|_A.
\end{gather}
The latter, together with $\id_L\colon L\to L$, allows us to transfer the Courant--Jacobi algebroid structure from $(E;L)$ to $(A\oplus A^\ast;L)$ so that, in particular, the non-degenerate $L$-valued symmetric pairing on $A\oplus A^\dagger$ is given by
\begin{equation*}
	\ldab \xi+\alpha,\zeta+\beta\rdab=\alpha(\zeta)+\beta(\xi),
\end{equation*}
for all $\xi+\alpha,\zeta+\beta\in \Gamma(A\oplus A^\dagger)$.
Hence, once equipped with this transferred structure, $(A\oplus A^\dagger;L)$ becomes a split Courant--Jacobi algebroid.
In the rest of this section, for each choice of a complementary almost--Dirac--Jacobi structure $B$, we will systematically understand the identification, induced by $(\phi_B,\id_B)$, of the given Courant--Jacobi algebroid $(E;L)$ with the transferred split Courant--Jacobi algebroid $(A\oplus A^\dagger;L)$.

In view of Theorem~\ref{theor:CJ_algebroid_as_MC-element}, the split Courant--Jacobi algebroid structure thus obtained on $(A\oplus A^\dagger;L)$ is fully encoded by the corresponding MC element of $(\Gamma(\calL_A)[2],\{-,-\})$ that we denote by $\Theta_B$ to stress its dependence on the complementary almost Dirac--Jacobi structure $B$.
Explicitly, one has
\begin{equation*}
	\ldab u, v\rdab=-\{u,v\},\quad\ldsb u, v\rdsb=\{\{u,\Theta_B\},v\},\quad\nabla_u\lambda=\{ \{u,\Theta_B\},\lambda\}\equiv\{\{\Theta_B,\lambda\},u\},
\end{equation*}
for all $u,v\in\Gamma(A\oplus A^\dagger)\simeq \Gamma(\calL_A)^1$ and $\lambda\in \Gamma(L)\simeq\Gamma(\calL_A)^0$.
Further, each choice of a complementary almost Dirac--Jacobi structure determines a (flat) V-data as we get by specializing Lemma~\ref{lem:V-data} to the current situation.
\begin{lemma}
	\label{lem:V-data:DJ}
	Let $(E;L)$ be a Courant--Jacobi algebroid and $A\subset E$ be a Dirac--Jacobi structure.
	Each Lagrangian subbundle $B\subset E$ complementary to $A$ determines the (flat) V-data formed by
	\begin{itemize}
		\item the graded Lie algebra $\frakg:=(\Gamma(\calL_A)[2],\{-,-\})$,
		\item its abelian Lie subalgebra $\fraka:=\Gamma(L_A)[2]=\Omega^\bullet(A;L)[2]$,
		\item the natural projection $P:\frakg\to\fraka$ given by the restriction to the zero section of $J^1[2]L_A\to A[1]$,
		\item the Maurer--Cartan element $-\Theta_B$ of $\frakg$.
	\end{itemize}
\end{lemma}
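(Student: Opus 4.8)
The plan is to deduce Lemma~\ref{lem:V-data:DJ} as a direct specialization of Lemma~\ref{lem:V-data}, the extra input being that involutivity of $A$ promotes the resulting \emph{curved} V-data to a \emph{flat} one. So first I would use the identification $(\phi_B,\id_L)$ of~\eqref{eq:VB-iso_A+A^ast} to transport the Courant--Jacobi algebroid structure of $(E;L)$ to a split Courant--Jacobi algebroid structure on $(A\oplus A^\dagger;L)$, exactly as in the paragraph preceding the statement. Applying Lemma~\ref{lem:V-data} to this split Courant--Jacobi algebroid immediately produces the quadruple $(\frakg,\fraka,P,-\Theta_B)$, with $\frakg=(\Gamma(\calL_A)[2],\{-,-\})$, $\fraka=\Omega^\bullet(A;L)[2]$, $P$ the restriction to the zero section of $J^1[2]L_A\to A[1]$, and $-\Theta_B$ a Maurer--Cartan element of $\frakg$; here $\Theta_B$ is the degree $3$ section of $\calL_A$ attached by Theorem~\ref{theor:CJ_algebroid_as_MC-element} to the transferred structure. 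This already verifies every item of Definition~\ref{def:V-data} except the flatness requirement $P(-\Theta_B)=0$.

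The only point left to check is therefore flatness, i.e.\ the vanishing of the $0$-ary multibracket. By the bidegree decomposition~\eqref{eq:Theta:bidegree_decomposition}, the image of $\Theta_B$ under $P$ — equivalently its component of bidegree $(0,\bullet)$ — is exactly $-\pi^\ast\Upsilon_A$, because the three remaining summands $h_{\rmd_{A,L}}$, $F^\ast h_{\rmd_{A^\dagger,L}}$ and $F^\ast\widetilde\pi^\ast\Upsilon_{A^\dagger}$ all have first bidegree $\geq 1$ and hence lie in $\ker P$; this is also recorded as Theorem~\ref{theor:higher_derived_brackets:splitCJ}\ref{enumitem:prop:higher_derived_brackets:0}, where $\frakm_0$ is identified with $\Upsilon_A$. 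Now, crucially, $A$ is not merely an almost Dirac--Jacobi structure but a genuine Dirac--Jacobi structure, so by Proposition~\ref{prop:Courant_tensor} its Courant--Jacobi tensor vanishes, $\Upsilon_A=0$. Consequently $P(-\Theta_B)=\pi^\ast\Upsilon_A=0$, which is precisely the flatness condition, and the quadruple $(\frakg,\fraka,P,-\Theta_B)$ is a (flat) V-data.

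There is essentially no hard step here: once Lemma~\ref{lem:V-data} and Proposition~\ref{prop:Courant_tensor} are available the argument is pure bookkeeping, and the one thing worth stating explicitly is the implication ``$A$ involutive $\Rightarrow$ $\Upsilon_A=0$ $\Rightarrow$ the V-data is flat'', which is exactly why the word ``(flat)'' appears in the statement and which will be used later to ensure the deformation $L_\infty$ algebra of Section~\ref{sec:deformation_theory:DJ} has no curvature term.
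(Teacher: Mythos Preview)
Your proof is correct and follows essentially the same approach as the paper: invoke Lemma~\ref{lem:V-data} to obtain the curved V-data, then use the bidegree decomposition~\eqref{eq:Theta:bidegree_decomposition} together with Proposition~\ref{prop:Courant_tensor} (involutivity $\Rightarrow\Upsilon_A=0$) to conclude $P(\Theta_B)=0$ and hence flatness. If anything, your handling of the sign in $P(\Theta_B)=-\pi^\ast\Upsilon_A$ is slightly more careful than the paper's.
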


\begin{proof}
	In view of Lemma~\ref{lem:V-data}, the quadruple $(\frakg,\fraka,P,-\Theta_B)$ forms a curved V-data.
	So, we only have to check that $\Theta_B\in\ker P$.
	According to the $\bbN\times\bbN$ bi-degree of $\Gamma(\calL_A)$, the MC element $\Theta_B$ corresponding to the transferred split Courant--Jacobi algebroid $(A\oplus A^\dagger;L)$ decomposes as follows
	\begin{equation*}
		\Theta_B=-\underbrace{\pi^\ast\Upsilon_A\vphantom{h_{\rmd_{A^\dagger,L}}}}_{(0,3)}
		+\underbrace{h_{\rmd_{A,L}}\vphantom{h_{\rmd_{A^\dagger,L}}}}_{(1,2)}
		+\underbrace{F^\ast h_{\rmd_{A^\dagger,L}}}_{(2,1)}
		-\underbrace{F^\ast\widetilde\pi^\ast\Upsilon_{A^\dagger}\vphantom{h_{\rmd_{A^\dagger,L}}}}_{(3,0)},
	\end{equation*}
	where we keep using the same notation from Theorem~\ref{theor:CJ_algebroid_as_MC-element}.
	So, one immediately gets $P(\Theta_B)=\Upsilon_A$.
	Since $A$ is a Dirac--Jacobi structure, $\Upsilon_A=0$ by Proposition~\ref{prop:Courant_tensor}), and so $P(\Theta_B)=0$.
\end{proof}

The next Proposition constructs the (flat) $LR_\infty[1]$-algebra associated with $A$ by the choice of a complementary almost Dirac--Jacobi structure $B$.
We keep using the notations from Theorem~\ref{theor:higher_derived_brackets:splitCJ} and understanding the VB-isomorphism $B\simeq A^\dagger$ induced by $\ldab-,-\rdab$ and the identification $E\simeq A\oplus A^\dagger$.
However, before we can state Theorem~\ref{theor:higher_derived_brackets:splitCJ}, we need to preliminarily introduce some notation concerning the $L_{\infty}[1]$-algebra.

\begin{remark}
	\label{rem:dgL[1]a}
	Let $V$ be a graded vector space.
	Even though in this note we always work with $L_\infty[1]$-algebras, let us recall that $L_\infty$-algebra structures $\{\mu_k\}$ on $V$ correspond bijectively to $L_\infty[1]$-algebra structures $\{\frakm_k\}$ on $V[1]$ via the following relation (see~\cite[Remark 1.1]{fiorenza2007cones}):
	\begin{equation}
		\label{eq:decalage_isomorphism}
		\mu_k(v_1,\ldots,v_k)=(-)^k(-)^{\sum_i(k-i)|v_i|}\frakm_k(v_1[1],\ldots,v_k[1]),
	\end{equation}
	for all $k\in\bbN$ and homogeneous $v_1,\ldots,v_k\in V$.
	Consequently, differential graded Lie algebra (dgLa) structures on $V$ correspond bijectively to \emph{dgL[1]a structures} on $V[1]$, i.e.~those $L_\infty[1]$ algebra structures $\{\frakm_k\}$ on $V[1]$ whose only non-trivial brackets are $\frakm_1$ and $\frakm_2$.
\end{remark}

Now we are ready to construct the \emph{deformation $L_\infty[1]$ algebra} associated with a Dirac--Jacobi structure $A$ by the choice of a complementary almost Dirac--Jacobi structure $B$.

\begin{theorem}
	\label{theor:deformation_L_infty_algebra}
	Let $(E;L)$ be a Courant--Jacobi algebroid and let $A\subset E$ be a Dirac--Jacobi structure.
	For each almost Dirac--Jacobi structure $B$ complementary to $A$, the graded $\Omega^\bullet(A)$-module $\Omega^\bullet(A;L)[2]$ is endowed with an $LR_\infty[1]$ algebra structure $\{\frakm_k^B\}$ with only non-trivial brackets, $\frakm^B_1,\frakm^B_2$, and $\frakm^B_3$.
	\begin{enumerate}[label=(\arabic*)]
		\item
		\label{enumitem:prop:higher_derived_brackets:1_bis}
		The unary bracket $\frakm^B_1$ is the de Rham differential $\rmd_{A,L}:\Omega^\bullet(A;L)\to\Omega^\bullet(A;L)$.
		\item
		\label{enumitem:prop:higher_derived_brackets:2_bis}
		The binary bracket $\frakm^B_2$ is given by, for all homogeneous $\alpha,\beta\in\Omega^\bullet(A;L)$,
		\begin{equation}
		\label{eq:prop:higher_derived_brackets:2_bis}
		\frakm^B_2(\alpha,\beta)=(-)^{|\alpha|}[\alpha,\beta]_{A^\dagger,L}.
		\end{equation}
		\item
		\label{enumitem:prop:higher_derived_brackets:3_bis}
		The ternary bracket $\frakm^B_3$ is expressed as follows in terms of $\Upsilon_{A^\dagger}$
		\begin{equation}
		\label{eq:prop:higher_derived_brackets:3_bis}
		\frakm^B_3(\alpha,\beta,\gamma)=-(-)^{|\beta|}(\alpha^\sharp\wedge\beta^\sharp\wedge\gamma^\sharp\otimes\id_L)\Upsilon_{A^\dagger},
		\end{equation}
		for all homogeneous $\alpha,\beta,\gamma\in\Omega^\bullet(A;L)$.
	\end{enumerate}
	Above, in the LHS of Equations~\eqref{eq:prop:higher_derived_brackets:2_bis} and~\eqref{eq:prop:higher_derived_brackets:3_bis}, we are understanding the VB isomorphism~\eqref{eq:VB-iso_A_B^ast}.
	Moreover, this $LR_\infty[1]$ algebra $(\Omega^\bullet(A;L)[2],\{\frakm_k^B\})$ reduces to a dgL[1]a, i.e.~$\frakm^B_3=0$, iff $B$ is involutive.
\end{theorem}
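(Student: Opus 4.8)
The plan is to obtain the statement as a direct consequence of Theorem~\ref{theor:higher_derived_brackets:splitCJ} together with Lemma~\ref{lem:V-data:DJ}. First I would invoke the structure transfer already set up before the theorem: the choice of the complementary Lagrangian $B$ yields, through the VB isomorphism $\phi_B$ of~\eqref{eq:VB-iso_A+A^ast} together with $\id_L$, a \emph{split} Courant--Jacobi algebroid structure on $(A\oplus A^\dagger;L)$, under which the Dirac--Jacobi structure $A$ and the almost Dirac--Jacobi structure $B$ are carried to the two complementary Lagrangians $A$ and $A^\dagger$. By Theorem~\ref{theor:CJ_algebroid_as_MC-element} this structure is encoded by an MC element $\Theta_B$ of $(\Gamma(\calL_A)[2],\{-,-\})$, with the bidegree decomposition $\Theta_B=-\pi^\ast\Upsilon_A+h_{\rmd_{A,L}}+F^\ast h_{\rmd_{A^\dagger,L}}-F^\ast\widetilde\pi^\ast\Upsilon_{A^\dagger}$ recorded in Lemma~\ref{lem:V-data:DJ}.

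Next I would apply Theorem~\ref{theor:higher_derived_brackets:splitCJ} verbatim to this split Courant--Jacobi algebroid. It endows $\Omega^\bullet(A;L)[2]$ with an $LR_\infty[1]$ algebra structure $\{\frakm_k^B\}$ whose only possibly non-zero brackets are $\frakm_0^B,\frakm_1^B,\frakm_2^B,\frakm_3^B$, given respectively by $\Upsilon_A$, by $\rmd_{A,L}$, by~\eqref{eq:prop:higher_derived_brackets:2_bis}, and by~\eqref{eq:prop:higher_derived_brackets:3_bis}; in particular $\frakm_k^B=0$ for $k\geq 4$. The $LR_\infty[1]$ property, rather than merely $L_\infty[1]$, is automatic here since $\{-,-\}$ is a bi-derivation of the $\Omega^\bullet(A)$-module $\Omega^\bullet(A;L)$ and $P$ covers an algebra morphism, exactly as in the proof of Theorem~\ref{theor:higher_derived_brackets:splitCJ}. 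The only point where the present situation improves on the general (curved) case is the $0$-ary bracket: since $A$ is a genuine, not merely almost, Dirac--Jacobi structure, Proposition~\ref{prop:Courant_tensor} gives $\Upsilon_A=0$, so $\frakm_0^B=0$ and the $LR_\infty[1]$ algebra is flat. Equivalently, $\Theta_B\in\ker P$ by Lemma~\ref{lem:V-data:DJ}, so Voronov's higher derived brackets carry no curvature term. This proves~\ref{enumitem:prop:higher_derived_brackets:1_bis},~\ref{enumitem:prop:higher_derived_brackets:2_bis}, and~\ref{enumitem:prop:higher_derived_brackets:3_bis}.

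For the last assertion I would show that $\frakm_3^B$ and $\Upsilon_{A^\dagger}$ determine one another: evaluating~\eqref{eq:prop:higher_derived_brackets:3_bis} on $\alpha,\beta,\gamma\in\Gamma(A^\dagger)=\Omega^1(A;L)$ recovers, up to sign, the total contraction $\iota_\gamma\iota_\beta\iota_\alpha\Upsilon_{A^\dagger}\in\Gamma(L)$, so $\frakm_3^B=0$ if and only if $\Upsilon_{A^\dagger}=0$. Since $\Upsilon_{A^\dagger}$ is the Courant--Jacobi tensor of the almost Dirac--Jacobi structure $A^\dagger=\phi_B(B)$ inside the transferred split Courant--Jacobi algebroid, Proposition~\ref{prop:Courant_tensor} shows it vanishes exactly when $A^\dagger$ is involutive, i.e.\ exactly when $B$ is involutive. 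In that case the surviving brackets are only $\frakm_1^B$ and $\frakm_2^B$, so $(\Omega^\bullet(A;L)[2],\{\frakm_k^B\})$ is a dgL[1]a in the sense of Remark~\ref{rem:dgL[1]a}.

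I do not expect a serious obstacle, since the substantive work is done in Theorem~\ref{theor:higher_derived_brackets:splitCJ}: the only genuinely new inputs are the vanishing of $\Upsilon_A$ forced by involutivity of $A$ and the elementary faithfulness of the $\sharp$-operations. The one place that demands care is the bookkeeping at the start---verifying that the transfer of structure along $\phi_B$ genuinely lands in the \emph{split} setting of Section~\ref{sec:split-CJ_algebroids} (so that Theorem~\ref{theor:higher_derived_brackets:splitCJ} applies as stated, with $\phi_B(B)=A^\dagger$), which in turn rests on $B$ being Lagrangian.
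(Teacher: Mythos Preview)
Your proposal is correct and takes essentially the same approach as the paper, which simply states that the result follows immediately from Theorem~\ref{theor:higher_derived_brackets:splitCJ} and Lemma~\ref{lem:V-data:DJ}. You have spelled out in full the reasoning behind that one-line proof, including the explicit argument for the final ``iff $B$ is involutive'' assertion via Proposition~\ref{prop:Courant_tensor}, which the paper leaves implicit.
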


\begin{proof}
	It follows immediately from Theorem~\ref{theor:higher_derived_brackets:splitCJ} and Lemma~\ref{lem:V-data:DJ}.
\end{proof}

\begin{remark}
	In view of Remarks~\ref{rem:Courant_algebroid} and~\ref{rem:Dirac_structures}, we can also apply Theorem~\ref{theor:deformation_L_infty_algebra} to a Dirac structure $A$ in a Courant algebroid $E$.
	In this way, we recover the cubic $L_\infty$ algebra constructed in~\cite{fregier_zambon_2015} for any choice of an almost Dirac structure $B\subset E$ complementary to $A$.
\end{remark}

\color{black}

\subsection{Change of complementary almost Dirac--Jacobi structure}
\label{sec:L_infty-isomorphism}

A priori the $L_\infty[1]$-algebra attached to a Dirac--Jacobi structure $A$ in a Courant--Jacobi algebroid $(E;L)$ (see Theorem~\ref{theor:deformation_L_infty_algebra}) depends on the choice of a complementary almost Dirac--Jacobi structure $B$.
Actually, it does not, up to $L_\infty[1]$-algebra isomorphisms, as we are going to show below.
Indeed, using the results of~\cite{CS} on the equivalence of higher derived brackets, we construct a canonical $L_\infty[1]$-algebra isomorphism between the $L_\infty[1]$-algebra structures on $\Omega^\bullet(A)[2]$ coming from two different choices of $B$.
Additionally, in this way, we get rather efficient formulas (from a computational point of view) for the Taylor coefficient of this canonical $L_\infty[1]$-algebra isomorphism.

Let $(E;L)$ be a Courant--Jacobi algebroid and let $A\subset E$ be an almost Dirac--Jacobi structure.
For $i=0,1$, fix an almost Dirac--Jacobi structure $B_i\subset E$ complementary to $A$, so that $E=A\oplus B_i$.
Equip $A\oplus A^\dagger$ with the non-degenerate $L$-valued symmetric product $\ldab-,-\rdab$ coming from the duality pairing.
For $i=0,1$, the VB isomorphism over $\id_M$
\begin{equation}
	\label{eq:phi_i}
	\phi_i:E=A\oplus B_i\overset{\sim}{\longrightarrow} A\oplus A^\dagger,\ (u,v)\longmapsto (u,\ldab v,-\rdab|_A),
\end{equation}
is an orthogonal transformation, and there exists unique split Courant--Jacobi algebroid structure on $A\oplus A^\dagger$, with Loday bracket $\ldsb-,-\rdsb_i$ on $\Gamma(A\oplus A^\dagger)$ and VB morphism $\nabla^i\colon A\oplus A^\dagger\to DL$, such that the map~\eqref{eq:phi_i} becomes a Courant--Jacobi algebroid isomorphism.
Denote by $\Theta_i$ the MC element of $(\Gamma(\calL_A)[2],\{-,-\})$ that corresponds to the split Courant--Jacobi algebroid structure $(\ldab-,-\rdab,\ldsb-,-\rdsb_i,\nabla^i)$ on $A\oplus A^\dagger$ in view of Theorem~\ref{theor:CJ_algebroid_as_MC-element}.
Therefore, for $i=0,1$, by Lemma~\ref{lem:V-data:DJ} one gets the following (flat) V-data
\begin{equation*}
	(\Gamma(\calL_A)[2],\Omega^\bullet(A;L)[2],P,-\Theta_i),
\end{equation*}
where $\frakg$ the graded Lie algebra is $(\Gamma(\calL_A)[2],\{-,-\})$, $\fraka\subset\frakg$ is the abelian Lie subalgebra $\Gamma(L_A)=\Omega^\bullet(A;L)\simeq\Gamma(\calL_A)^{0,\bullet}$, and $P$ is the projection $\Gamma(\calL_A)^{\bullet,\bullet}\longrightarrow\Gamma(\calL_A)^{0,\bullet}$.

As pointed out in Theorem~\ref{theor:deformation_L_infty_algebra}, these V-data allow to cook up, by Voronov's technique of higher derived brackets, the $L_\infty[1]$ algebra structure on $\Omega^\bullet(A;L)[2]$ associated with $A$ by the choice of $B_i$ that we are going to denote by $\{\frakm_k^i\}$.
Further, for $0,1$, this $L_\infty[1]$ algebra structure $\{\frakm^i_k\}$ on $\Omega^\bullet(A;L)[2]$ corresponds to a codifferential  $\calQ_i$ of ${\sf S}(\Omega^\bullet(A;L)[2])$ (cf., e.g., Proposition~\ref{prop:coalgebra_coderivation}).

Since both almost Dirac--Jacobi structures $B_0$ and $B_1$ are complementary to $A$, i.e.~$E=A\oplus B_i$, for $i=0,1$, there exists a unique $\epsilon\in\Omega^2(A^\dagger;L)\simeq\Gamma(\calL_A)^{2,0}$ such that
\begin{equation}
	B_1=\gr(\epsilon)\equiv\{\iota_\alpha\epsilon+\alpha\mid\alpha\in A^\dagger\}\subset A\oplus A^\dagger\simeq A\oplus B_0=E,
\end{equation}
where we are understanding the VB isomorphism $\varphi_0:E\overset{\sim}{\longrightarrow} A\oplus A^\dagger$.
Then, setting ${\sf m}:=\{\epsilon,-\}$, we get that ${\sf m}$ is a degree $0$ graded derivation of both
\begin{itemize}
	\item the graded $C^\infty(J^1[2]L_A)$-module $\Gamma(\calL_A)$ and
	\item the graded Lie algebra $(\Gamma(\calL_A)[2],\{-,-\})$.
\end{itemize}
In particular, notice that ${\sf m}$ has bidegree $(1,-1)$, i.e.~${\sf m}(\Gamma(\calL_A)^{\epsilon,\delta})\subset\Gamma(\calL_A)^{\epsilon+1,\delta-1}$.

Denote by ${\sf M}$ the degree $0$ coalgebra coderivation of ${\sf S}(\Omega^\bullet(A;L)[2])$ such that its Taylor coefficients $\{{\sf M}_k\}$ are given by
\begin{equation}
	\label{eq:M_k}
	{\sf M}_k(\omega_1\odot\ldots\odot\omega_k)=P\{\{\ldots,\{{\sf m}\omega_1,\omega_2\},\ldots\},\omega_k\},
\end{equation}
for all $\omega_1,\dots,\omega_k\in\Omega^\bullet(A;L)[2]$.
Recall that ${\sf M}$ is uniquely determined by Equation~\eqref{eq:M_k} (cf., e.g., Proposition~\ref{prop:coalgebra_coderivation}).
Hence, in particular, for all $\omega_1,\omega_2\in\Omega^2(A;L)$ and $\alpha\in\Omega^1(A;L)$,
\begin{equation}
	\label{eq:M_2:bis:app}
	({\sf M}_2(\omega_1\odot \omega_2))^\sharp=\omega_1^\sharp\epsilon^\flat \omega_2^\sharp+\omega_2^\sharp\epsilon^\flat \omega_1^\sharp,\qquad{\sf M}_2(\omega_1\odot \alpha)=\omega_1^\sharp(\epsilon^\flat(\alpha)).
\end{equation}
Moreover, ${\sf M}_k=0$ for $k\neq 2$, and this implies that ${\sf M}({\sf S}^n(\Omega^\bullet(A;L)[2]))\subset{\sf S}^{n-1}(\Omega^\bullet(A;L)[2])$, for all $n$.
So, ${\sf M}$ is pronilpotent and its flow is given by 
\begin{equation*}
	e^{t{\sf M}}:=\sum_{k=0}^\infty\frac{t^k}{k!}{\sf M}^k
\end{equation*}
which is a well-defined one-parameter group of graded coalgebra automorphisms of ${\sf S}(\Omega^\bullet(A;L)[2])$.

Now we are ready to prove that different choices of the complementary almost Dirac--Jacobi structure lead to canonically $L_\infty$ isomorphic $L_\infty[1]$ algebras, and so the deformation $L_\infty[1]$ algebra of a Dirac--Jacobi structure is unique up to $L_\infty$ isomorphisms.

\begin{theorem}
	\label{theor:GMS}
	Keeping the notations introduced above, the coalgebra automorphism $e^{\sf M}$
	gives rise to a codifferential coalgebra isomorphism 
	\begin{equation*}
		e^{\sf M}\colon({\sf S}(\Omega^\bullet(A;L)[2]),\calQ_0)\longrightarrow({\sf S}(\Omega^\bullet(A;L)[2]),\calQ_1),
	\end{equation*}
	and so it corresponds to an $L_\infty[1]$-algebra isomorphism (cf., e.g., Remark~\ref{rem:L_infinity_algebra_iso})
	\begin{equation*}
		\{(e^{\sf M})_k\}_{k\in\bbN}\colon(\Omega^\bullet(A;L)[2],\{\frakm_k^0\})\longrightarrow(\Omega^\bullet(A;L)[2],\{\frakm_k^1\}).
	\end{equation*}
\end{theorem}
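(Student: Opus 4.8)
The plan is to deduce the statement from the equivalence of higher derived brackets of Cattaneo and Sch\"atz~\cite{CS}, recalled in Appendix~\ref{app:L_infty_algebras}, once the relation between the two Maurer--Cartan elements $\Theta_0,\Theta_1$ has been pinned down. The key identity to establish is the \emph{gauge relation}
\begin{equation*}
	\Theta_1=e^{\sf m}\Theta_0,\qquad\text{where}\quad {\sf m}=\{\epsilon,-\},
\end{equation*}
with $\epsilon\in\Omega^2(A^\dagger;L)\simeq\Gamma(\calL_A)^{2,0}$ the unique element satisfying $B_1=\gr(\epsilon)$ inside $A\oplus A^\dagger\simeq A\oplus B_0=E$. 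Granting this, the cited theorem of~\cite{CS}, applied to the two (flat) V-data $(\frakg,\fraka,P,-\Theta_0)$ and $(\frakg,\fraka,P,-\Theta_1)$ of Lemma~\ref{lem:V-data:DJ}, produces exactly the coderivation ${\sf M}$ with Taylor coefficients~\eqref{eq:M_k} and asserts that $e^{\sf M}$ intertwines the induced codifferentials $\calQ_0$ and $\calQ_1$, which is the content of the theorem.

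I would prove the gauge relation in two complementary ways. Geometrically: the two identifications $\phi_0,\phi_1\colon E\overset{\sim}{\to}A\oplus A^\dagger$ of~\eqref{eq:phi_i} differ by the orthogonal ``$B$-field'' transformation $\phi_1\circ\phi_0^{-1}\colon A\oplus A^\dagger\to A\oplus A^\dagger$, the shear fixing $A$ pointwise and sending $A^\dagger$ to $\gr(\epsilon)$; under the correspondence of Theorem~\ref{theor:CJ_algebroid_as_MC-element} this is implemented on $J^1[2]L_A$ by the contactomorphism given by the time-one flow of the degree $0$ Reeb vector field $\calX_\epsilon$, i.e.~by $e^{\{\epsilon,-\}}$ at the level of sections of $\calL_A$; since $(\ldsb-,-\rdsb_1,\nabla^1)$ are by construction the pushforwards of $(\ldsb-,-\rdsb_0,\nabla^0)$ along this transformation and Theorem~\ref{theor:CJ_algebroid_as_MC-element} reconstructs the Maurer--Cartan element canonically from those data, we get $\Theta_1=e^{\sf m}\Theta_0$. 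Computationally (the check I would actually write out): using $\Upsilon_A=0$ (Proposition~\ref{prop:Courant_tensor}), the decomposition of $\Theta_0$ in~\eqref{eq:Theta:bidegree_decomposition} starts in bidegree $(1,2)$, so $e^{\sf m}\Theta_0$ truncates after ${\sf m}^2$ by bidegree reasons; its $(1,2)$-part is unchanged (consistently with Proposition~\ref{prop:almost_DJ_structure|Jacobi_algebroid}, as the Jacobi algebroid $(A;L)$ does not depend on the complement), while the $(2,1)$- and $(3,0)$-parts reproduce the $B$-field transforms $\rmd^1_{A^\dagger,L}=\rmd^0_{A^\dagger,L}+[{\sf m},\rmd_{A,L}]$ and $\Upsilon^1_{A^\dagger}=\Upsilon^0_{A^\dagger}+(\text{terms in }\epsilon,\rmd_{A,L},\rmd^0_{A^\dagger,L})$ of the transferred structure maps, verified via Lemmas~\ref{lem:Hamiltonian_lift:degree1} and~\ref{lem:Hamiltonian_lift:Legendre} and the Cartan calculi on $\Omega^\bullet(A;L)$ and $\Omega^\bullet(A^\dagger;L)$.

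It then remains to check the hypotheses under which~\cite{CS} guarantees that $e^{\sf M}$ is a codifferential isomorphism; this is routine. Since $\epsilon$ has total degree $2$, the map ${\sf m}=\{\epsilon,-\}=\ad_\epsilon$ is a degree $0$ derivation of the graded Lie algebra $\frakg=(\Gamma(\calL_A)[2],\{-,-\})$ and, $\{-,-\}$ being a bi-derivation of the $C^\infty(J^1[2]L_A)$-module $\Gamma(\calL_A)$, also a derivation of that module; moreover ${\sf m}$ has bidegree $(1,-1)$, whence ${\sf m}(\fraka)={\sf m}(\Gamma(\calL_A)^{0,\bullet})\subset\Gamma(\calL_A)^{1,\bullet-1}\subset\ker P$, which is the compatibility with $P$ required in~\cite{CS}. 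The same bidegree count shows ${\sf M}_k=0$ for $k\neq2$ (with ${\sf M}_2$ as in~\eqref{eq:M_2:bis:app}), so ${\sf M}$ strictly lowers the symmetric arity, is locally nilpotent, and $e^{\sf M}=\sum_{k\geq0}\tfrac{1}{k!}{\sf M}^k$ is a well-defined degree $0$ coalgebra automorphism with inverse $e^{-{\sf M}}$. Feeding the gauge relation into the theorem of~\cite{CS} yields $\calQ_1=e^{\sf M}\calQ_0\,e^{-{\sf M}}$, i.e.~$e^{\sf M}\colon({\sf S}(\Omega^\bullet(A;L)[2]),\calQ_0)\to({\sf S}(\Omega^\bullet(A;L)[2]),\calQ_1)$ is a codifferential coalgebra isomorphism, and the associated family $\{(e^{\sf M})_k\}$ is the desired $L_\infty[1]$-isomorphism (Remark~\ref{rem:L_infinity_algebra_iso}). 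The only real obstacle is the gauge relation $\Theta_1=e^{\sf m}\Theta_0$: everything else is bookkeeping, but establishing it requires either the (not entirely formal) identification of the $B$-field transform with the Reeb flow of $\epsilon$, or the bigraded computation above.
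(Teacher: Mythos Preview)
Your proposal is correct and follows essentially the same approach as the paper: both establish the gauge relation $\Theta_1=e^{\sf m}\Theta_0$ and then invoke the Cattaneo--Sch\"atz equivalence (Theorem~\ref{theor:CS}). Your ``geometric'' route to the gauge relation---identifying $e^{\sf m}$ on $\Gamma(A\oplus A^\dagger)\simeq\Gamma(\calL_A)^1$ with $\phi_1\circ\phi_0^{-1}$ and then using the derived-bracket formulas~\eqref{eq:theor:CJ_algebroid_as_MC-element} together with non-degeneracy of $\{-,-\}$---is exactly what the paper does (there via a short Darboux-coordinate check). One small imprecision: the compatibility condition~\ref{enumitem:CS:1} in the appendix is ${\sf m}(\ker P)\subset\ker P$, not ${\sf m}(\fraka)\subset\ker P$; both, however, follow immediately from your bidegree $(1,-1)$ observation, and the same observation trivializes condition~\ref{enumitem:CS:2} since then $P\circ{\sf m}|_\fraka=0$.
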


\begin{proof}
	First of all, it is easy to see that ${\sf m}:=\{\epsilon,-\}$ satisfies the conditions~\ref{enumitem:CS:1} and~\ref{enumitem:CS:2} in Section~\ref{sec:equivalence_higher_derived_brackets:CS}.
	Further, for each $\lambda\in \Gamma(\calL_A)$, one gets that ${\sf m}^k\lambda=0$ for all but finitely many $k$.
	This means that the bidegree $(+1,-1)$ derivation ${\sf m}$ is \emph{pronilpotent} and so its flow is given by 
	\begin{equation*}
		e^{t{\sf m}}:=\sum_{k=0}^\infty\frac{t^k}{k!}{\sf m}^k
	\end{equation*}
	which is a well-defined $1$-parameter group of automorphisms of both the graded $C^\infty(J^1[2]L_A)$-module $\Gamma(\calL_A)$ and the graded Lie algebra $(\Gamma(\calL_A)[2],\{-,-\})$.
	Consequently, for every $t\in\bbR$, we can introduce a new MC element $\Theta(t)$ of $(\Gamma(\calL_A)[2],\{-,-\})$ by setting
	\begin{equation}
		\label{eq:proof:theor:GMS:1}
		\Theta(t):=e^{t{\sf m}}\Theta_0.
	\end{equation}
	Since $e^t{\sf m}$ preserves $\ker P$ and $\Theta_0\in\ker P$, we get that also $\Theta(t)\in\ker P$, for every $t\in\bbR$.
	Therefore, keeping the same notations of Lemma~\ref{lem:V-data:DJ}, we get a new (flat) V-data $(\frakg,\fraka,P,\Theta(t))$ and, by Theorem~\ref{theor:deformation_L_infty_algebra}, also the associated $L_\infty[1]$-algebra structure $\{\frakm_k(t)\}_{k\in\bbN}$ on $\fraka=\Omega^\bullet(A;L)[2]$ with corresponding codifferential $\calQ(t)$ of ${\sf S}(\Omega^\bullet(A;L)[2])$.
	Obviously, for $t=0$ we get back
	$$\Theta(0)=\Theta_0,\quad\text{and}\quad\calQ(0)=\calQ_0.$$
	
	We want to prove that $\Theta(1)=\Theta_1$ (and so also $\calQ(1)=\calQ_1$).
	Using the Equation~\eqref{eq:canonical_Jacobi_structure:local_coordinates:degree1} expressing the Jacobi bracket $\{-,-\}$ in Darboux coordinates on $J^1[2]L_A$, it is a straightforward computation to show that $e^{\sf m}$ on $\Gamma(A\oplus A^\dagger)\simeq\Gamma(\calL_A)^1\subset\Gamma(\calL_A)$ agrees with $\phi_1\circ\phi_0^{-1}$, or equivalently
	\begin{equation*}
		e^{\sf m}\circ\varphi_0=\varphi_1\quad\text{on}\ \Gamma(E).
	\end{equation*}
	Consequently, one immediately gets that, for any $u,v\in\Gamma(A\oplus A^\dagger)$,  
	\begin{equation}
		\label{eq:proof:theor:GMS:2}
		\{\{e^{\sf m}\Theta_0,e^{\sf m}u\},e^{\sf m}v\}=e^{\sf m}\{\{\Theta_0,u\},v\}=e^{\sf m}\ldsb u,v\rdsb_0=\ldsb e^{\sf m}u,e^{\sf m}v\rdsb_1=\{\{\Theta_1,e^{\sf m}u\},e^{\sf m}v\}.
	\end{equation}
	Above we have also used the fact that $\phi_i$ is a Courant--Jacobi isomorphism from $(E;L)$ to $(A\oplus A^\dagger;L)$ where the latter is equipped with the split Courant--Jacobi algebroid structure $(\ldab-,-\rdab,\ldsb-,-\rdsb_i,\nabla^i)$.
	Since the Jacobi structure $\{-,-\}$ on $\calL_A\to J^1[2]L_A$ is non-degenerate, Equation~\eqref{eq:proof:theor:GMS:2} implies that
	\begin{equation}
		\label{eq:proof:theor:GMS:3}
		\Theta_1=e^{\sf m}\Theta_0.
	\end{equation}
	Therefore, comparing Equations~\eqref{eq:proof:theor:GMS:1} and~\eqref{eq:proof:theor:GMS:3}, we get that $\Theta(1)=\Theta_1$ and so also $\calQ(1)=\calQ_1$.

	Finally, applying Theorem~\ref{theor:CS} to $U(t)=e^{t{\sf M}}$ and $\Theta(t):=e^{t{\sf m}}\Theta_0$, for $t=1$, we get that the graded coalgebra automorphism $e^{\sf M}$ of ${\sf S}(\Omega^\bullet(A;L)[2])$
	gives a codifferential graded coalgebra isomorphism
	\begin{equation*}
		e^{\sf M}:({\sf S}(\Omega^\bullet(A;L)[2]),\calQ_0)\longrightarrow ({\sf S}(\Omega^\bullet(A;L)[2]),\calQ_1)
	\end{equation*}
	corresponding to an $L_\infty[1]$-algebra isomorphism $(\Omega^\bullet(A;L)[2],\{\frakm^0_k\})\longrightarrow(\Omega^\bullet(A;L)[2],\{\frakm^1_k\})$.
\end{proof}

\begin{remark}
	In view of Remarks~\ref{rem:Courant_algebroid} and~\ref{rem:Dirac_structures}, Theorem~\ref{theor:GMS} generalizes to Dirac--Jacobi structures the result first obtained in~\cite{gualtieri2020deformation} for Dirac structures using their corresponding pure spinors for the Clifford algebra of the ambient Courant algebroid.
	Therefore, in the case of Dirac structures, Theorem~\ref{theor:GMS} provides an alternative proof entirely based on the equivalence of higher derived brackets.
\end{remark}

\subsection{The Deformation Space of a Dirac--Jacobi Structure}

Given a Dirac--Jacobi structure $A\subset E$ and a complementary almost Dirac--Jacobi structure $B\subset E$, we now turn to the geometric information encoded in the Maurer-Cartan elements of the associated deformation $LR_\infty[1]$-algebra.

Recall that the product $\ldab-,-\rdab$ induces VB isomorphisms $B\overset{\sim}{\to}A^\dagger$ and $\varphi_B\colon E=A\oplus B\overset{\sim}{\to}A\oplus A^\dagger$ (see Equations~\eqref{eq:VB-iso_A_B^ast} and~\eqref{eq:VB-iso_A+A^ast}).
Further, $\varphi_B$ transform the Courant--Jacobi algebroid $(E;L)$ into a split Courant--Jacobi algebroid $(A\oplus A^\dagger;L)$ whose structure is encoded by an MC element $\Theta_B$ of $(\Gamma(\calL_A)[2],\{-,-\})$.
Moreover, understanding the identification $\varphi_B$, it is easy to see that the relation
\begin{equation}
	\label{eq:relation:MC_to_DJ}
	A^\prime=\gr(-\eta)=\{\xi-\iota_\xi\eta\mid\xi\in A\}\subset A\oplus A^\dagger\simeq_{\varphi_B} A\oplus B=E
\end{equation}
establishes a one-to-one correspondence between degree $0$ elements $\eta$ of $\Omega^\bullet(A)[2]$, i.e.~$2$-forms $\eta\in\Omega^2(A)$, and ``small'' deformations of $A$ seen as an almost Dirac--Jacobi structure, i.e.~almost Dirac structures $A^\prime\subset E$ that are \emph{close to $A$ w.r.t. $B$}, in the sense that they are still transverse to $B$.

As proven in the next Theorem~\ref{theor:DJ_deformation:MC-elements}, within the above one-to-one correspondence, the almost Dirac--Jacobi structures $A^\prime\subset E$ is involutive if and only if the $2$-form $\eta\in\Omega^2(A;L)$ is an MC element of $(\Omega^\bullet(A;L)[2],\{\frakm^B_k\})$.
Let us recall here that an \emph{MC element} the $L_\infty[1]$-algebra $\Omega^\bullet(A;L)[2],\{\frakm^B_k\})$ is a degree $0$ element $\eta$ of $\Omega^\bullet(A;L)[2]$, i.e.~a $2$-form $\eta\in\Omega^2(A;L)$, satisfying the following \emph{MC equation}
\begin{equation}
	\label{eq:MC-eqn}
	\frakm^B_1(\eta)+\frac{1}{2}\frakm^B_2(\eta,\eta)+\frac{1}{6}\frakm^B_3(\eta,\eta,\eta)=0.
\end{equation}
So, Theorem~\ref{theor:DJ_deformation:MC-elements} shows that the MC-elements of the associated deformation $LR_\infty[1]$ algebra encode the ``small'' deformations of the Dirac--Jacobi subbundle $A$, i.e.~those Dirac--Jacobi structures $A^\prime\subset E$ that are close to $A$ w.r.t.~$B$.
In other words, for each complementary almost Dirac--Jacobi structure $B\subset E$, the associated $L_\infty[1]$-algebra $(\Omega^\bullet(A;L)[2],\{\frakm_k^B\})$ controls the ``corresponding'' deformation problem of the given Dirac--Jacobi structure $A\subset E$.

\begin{theorem}
	\label{theor:DJ_deformation:MC-elements}
	Let $(E;L)$ be a Courant--Jacobi algebroid and $A\subset E$ a Dirac--Jacobi structure.
	Fix an almost Dirac--Jacobi structure $B\subset E$ transverse to $A$.
	Then the relation $A^\prime=\operatorname{Gr}(\eta)$ (cf.~Equation~\eqref{eq:relation:MC_to_DJ})
	establishes a canonical one-to-one correspondence between:
	\begin{itemize}
		\item Dirac--Jacobi structures $A^\prime\subset E$ that are transverse to $B$, and
		\item MC-elements $\eta$ of the $LR_\infty[1]$-algebra $(\Omega^\bullet(A;L)[2],\{\frakm^B_k\}_{k\in\bbN})$.
	\end{itemize}
\end{theorem}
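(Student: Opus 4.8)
The statement to prove is the one-to-one correspondence (via $A' = \gr(\eta)$, i.e.~Equation~\eqref{eq:relation:MC_to_DJ}) between Dirac--Jacobi structures $A'\subset E$ transverse to $B$ and MC elements $\eta\in\Omega^2(A;L)$ of the deformation $LR_\infty[1]$ algebra $(\Omega^\bullet(A;L)[2],\{\frakm_k^B\})$. The guiding idea is to translate everything into the graded-geometric picture of Section~\ref{sec:split-CJ_algebroids} and to reduce the Dirac--Jacobi integrability of $A'$ to a Maurer--Cartan equation for the \emph{gauge-transformed} homological section, then match this with the higher-derived-brackets MC equation of Voronov's construction.

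First I would set up the dictionary. Understanding the identification $\varphi_B\colon E\xrightarrow{\sim} A\oplus A^\dagger$, a $2$-form $\eta\in\Omega^2(A;L)$ corresponds, as in Example~\ref{ex:DJ:omniLie} and in the spirit of Proposition~\ref{prop:sections_bidegree_(k,0)}, to a bidegree-$(0,2)$ section, whose Hamiltonian lift $h_{\iota_{(-)}\eta}$ (or rather the associated degree-$0$ graded derivation $\iota_\eta\colon \Gamma(L_{A^\dagger})\to\Gamma(L_{A^\dagger})$, bidegree $(-1,1)$, call it $\sf n:=\{\pi^\ast\eta,-\}$) generates a pronilpotent flow $e^{\sf n}$ exactly as in the proof of Theorem~\ref{theor:GMS}. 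The key geometric fact, proved by the same Darboux-coordinate computation as in that proof, is that $e^{\sf n}$ restricted to $\Gamma(A\oplus A^\dagger)\simeq\Gamma(\calL_A)^1$ implements precisely the graph embedding $\xi\mapsto\xi - \iota_\xi\eta$, i.e.~$e^{\sf n}$ carries $A$ onto $A' = \gr(-\eta)$ and fixes $A^\dagger$; hence the transferred split Courant--Jacobi structure with respect to the splitting $E = A'\oplus B$ is encoded by the MC element $\Theta' := e^{\sf n}\Theta_B$ of $(\Gamma(\calL_A)[2],\{-,-\})$. Since $\Theta_B$ already satisfies $\{\Theta_B,\Theta_B\}=0$ and $e^{\sf n}$ is a graded Lie algebra automorphism, $\Theta'$ is automatically MC, so $A'$ is always an \emph{almost} Dirac--Jacobi structure; the content is the involutivity.

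Next I would characterize involutivity of $A'$ in terms of its Courant--Jacobi tensor. By Proposition~\ref{prop:Courant_tensor}, $A'$ is Dirac--Jacobi iff $\Upsilon_{A'} = 0$, and by the bidegree decomposition~\eqref{eq:Theta:bidegree_decomposition} applied to $\Theta'$ (now with respect to the splitting $E=A'\oplus B$, but transported back to $\Gamma(\calL_A)$ via $e^{\sf n}$), one has $P(\Theta') = \Upsilon_{A'}$, where $P$ is the restriction to the zero section. Therefore $A'$ is Dirac--Jacobi $\iff P(e^{\sf n}\Theta_B) = 0$. Now I expand $e^{\sf n}\Theta_B = \sum_k \tfrac{1}{k!}{\sf n}^k\Theta_B$ and apply $P$. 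Using that $\Theta_B = -\pi^\ast\Upsilon_A + h_{\rmd_{A,L}} + F^\ast h_{\rmd_{A^\dagger,L}} - F^\ast\widetilde\pi^\ast\Upsilon_{A^\dagger}$ with $\Upsilon_A = 0$ (as $A$ is Dirac--Jacobi), and that ${\sf n}$ has bidegree $(-1,1)$ while $P$ kills everything of $\epsilon$-degree $\geq 1$, a counting argument shows: ${\sf n}^k$ applied to the $(1,2)$-term contributes at $k=1$; to the $(2,1)$-term at $k=2$; to the $(3,0)$-term at $k=3$; and all higher powers land in a cleared bidegree. Carrying out these three brackets and comparing with the higher-derived-bracket formulas~\eqref{eq:prop:higher_derived_brackets:1:bis}--\eqref{eq:prop:higher_derived_brackets:3:bis} of Theorem~\ref{theor:higher_derived_brackets:splitCJ} (equivalently Theorem~\ref{theor:deformation_L_infty_algebra}), one finds exactly
\begin{equation*}
	P(e^{\sf n}\Theta_B) = \rmd_{A,L}\eta + \tfrac{1}{2}\frakm_2^B(\eta,\eta) + \tfrac{1}{6}\frakm_3^B(\eta,\eta,\eta),
\end{equation*}
which vanishes iff $\eta$ is MC. Finally, injectivity and surjectivity of $\eta\mapsto\gr(-\eta)$ among subbundles transverse to $B$ is the standard graph bijection (any $A'$ transverse to both $B\simeq A^\dagger$ and whose projection to $A$ is an isomorphism is the graph of a unique VB map $A\to A^\dagger$, i.e.~a unique $\eta\in\Omega^2(A;L)$, the skew-symmetry of $\eta$ forced by $A'$ being Lagrangian), so the correspondence is bijective.

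\textbf{Main obstacle.} The routine but delicate part is the bookkeeping of signs and bidegrees in expanding $P(e^{\sf n}\Theta_B)$ and matching it term-by-term with the three Voronov brackets — in particular verifying that the combinatorial factors $\tfrac12,\tfrac16$ coming from $\tfrac{1}{k!}$ in the exponential align with those in the MC equation~\eqref{eq:MC-eqn}, and that the graded-symmetry conventions of the $LR_\infty[1]$ brackets (Theorem~\ref{theor:deformation_L_infty_algebra}) are respected when $\eta$ is inserted three times. Conceptually, though, the proof is forced: the gauge flow $e^{\sf n}$ translates "change the Lagrangian complement of $B$ by $\eta$'' into "conjugate the homological section'', and the zero-section restriction $P$ of the conjugated section is, by construction, both the Courant--Jacobi tensor of the deformed structure and the MC function of the deformation $L_\infty$ algebra — the same equation read two ways. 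One should also remark (perhaps as a corollary) that this identification is compatible with Theorem~\ref{theor:GMS}: changing $B$ intertwines the two MC sets via the coalgebra isomorphism $e^{\sf M}$, so the bijection is canonical up to the $L_\infty$-isomorphisms already constructed.
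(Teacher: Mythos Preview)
Your proposal is correct and follows essentially the same route as the paper: introduce the degree~$0$ Hamiltonian derivation $\Delta=\{\eta,-\}$ (your ${\sf n}$), exponentiate to a pronilpotent Jacobi automorphism, observe that on $\Gamma(A\oplus A^\dagger)\simeq\Gamma(\calL_A)^1$ this exponential implements exactly the graph map, and then identify $P(e^{\pm\Delta}\Theta_B)=0$ simultaneously with the MC equation~\eqref{eq:MC-eqn} and with the involutivity of the graph.

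One small sharpening: rather than asserting directly that $P(\Theta')=\Upsilon_{A'}$ (which is a bit delicate, since $P$ is the bidegree projection for the \emph{original} splitting $A\oplus A^\dagger$, not for $A'\oplus B$), the paper argues more cleanly by transport. Namely, $P((e^{-\Delta})^\ast\Theta_B)=0$ says that $A$ is involutive for the Courant--Jacobi structure encoded by $(e^{-\Delta})^\ast\Theta_B$; since $e^\Delta$ is a Jacobi automorphism carrying $\Gamma(A)$ onto $\Gamma(\gr(\eta))$, this is equivalent to $\gr(\eta)$ being involutive for the original structure $\Theta_B$. This sidesteps the need to reinterpret $P$ in the deformed splitting. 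Also be careful with the direction: the paper's computation gives $(e^\Delta)^\ast(\xi)=\xi+\iota_\xi\eta$, so $e^\Delta$ sends $A$ to $\gr(\eta)$, not $\gr(-\eta)$; correspondingly it is $P(e^{-\Delta}\Theta_B)$ (not $P(e^{\sf n}\Theta_B)$) that equals the MC expression without alternating signs. These are precisely the sign/bookkeeping issues you flagged as the main obstacle, and once they are sorted your argument and the paper's coincide.
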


\begin{proof}
	It is evident that the relation $A^\prime=\text{Gr}(\eta)$ establishes a one-to-one correspondence between:
	\begin{itemize}
		\item $L$-valued $2$-forms $\eta$ on $A$, i.e.~degree $0$ elements $\eta$ of $\Omega^\bullet(A;L)[2]$, and
		\item almost Dirac--Jacobi structures $A^\prime\subset E$ transverse to $B$, so that $E=A^\prime\oplus B$.
	\end{itemize}
	So, it only remains to prove that, within this one-to-one correspondence, $\eta$ satisfies the MC equation~\eqref{eq:MC-eqn} if and only if $A^\prime$ is involutive.
	
	Fix $\eta\in\Omega^2(A;L)=\Gamma(L_A)^2\simeq\Gamma(\calL_A)^{0,2}$ and denote by $\Delta$ the associated degree $0$ Hamiltonian derivation of the graded Jacobi manifold $(J^1[2]L_A,\calL_A,\{-,-\})$ given by
	$$\Delta:=\{\eta,-\}.$$
	Since $\Delta$ has bidegree $(-1,1)$, it is pronilpotent and generates, as its flow, the one-parameter automorphism group $e^{t\Delta}$ of the graded Jacobi manifold $(J^1[2]L_A,\calL_A,\{-,-\})$ such that, for all $\lambda\in\Gamma(\calL_A)$,
	\begin{equation*}
	(e^{t\Delta})^\ast\lambda=\sum_{k=0}^\infty\frac{t^k}{k!}\Delta^k\lambda,
	\end{equation*}
	where $(e^{t\Delta})^\ast\colon\Gamma(\calL_A)\to\Gamma(\calL_A)$ denotes the pull-back of sections along $e^{t\Delta}\colon\calL_A\to\calL_A$.
	In particular, $(e^{-\Delta})^\ast$ is an automorphism of the graded Lie algebra $(\Gamma(\calL_A)[2],\{-,-\})$, and so $(e^{-\Delta})^\ast\Theta_B$ is still an MC element of the graded Lie algebra $(\Gamma(\calL_A)[2],\{-,-\})$ and encodes a new modified structure of split Courant--Jacobi algebroid on $(A\oplus A^\dagger;L)$ (see Theorem~\ref{theor:CJ_algebroid_as_MC-element}).
	
	Using the expression~\eqref{eq:canonical_Jacobi_structure:local_coordinates:degree1}, one can easily compute that, for all $\xi+\alpha\in\Gamma(A\oplus A^\dagger)\simeq\Gamma(\calL_A)^1$,
	\begin{equation*}
		\Delta(\xi+\alpha)=\{\eta,\xi+\alpha\}=\iota_\xi\eta,
	\end{equation*}
	Since $\Delta$ has bidegree $(-1,1)$, one also gets that $\Delta^k$ vanishes on $\Gamma(A\oplus A^\dagger)\simeq\Gamma(\calL_A)^1$ for all $k>1$.
	Hence the pull-back of sections $(e^\Delta)^\ast:\Gamma(\calL_A)\to\Gamma(\calL_A)$ induces the following bijection
	\begin{equation*}
		(e^\Delta)^\ast\colon\Gamma(A\oplus A^\dagger)\overset{\sim}{\longrightarrow}\Gamma(A\oplus A^\dagger),\ \xi+\alpha\longmapsto\xi+\alpha+\eta^\flat(\xi).
	\end{equation*}
	so that, in particular, $(e^\Delta)^\ast$ transforms $\Gamma(A)\simeq\Gamma(\calL)^{0,1}$ into $\Gamma(\text{Gr}(\eta))$, i.e.
	\begin{equation}
		\label{eq:theor:DJ_deformation:MC-elements}
		(e^\Delta)^\ast(\Gamma(A))=\gr(\eta).
	\end{equation}

	Now, since $\Theta_B$ has degree $3$ and $\Delta$ has bidegree $(-1,1)$, one gets that $\Delta^k\Theta_B=0$, for all $k>3$, and so one can rewrite the MC equation~\eqref{eq:MC-eqn} as follows
	\begin{equation*}
	0=P((e^{-\Delta})^\ast\Theta_B).
	\end{equation*}
	In view of Lemma~\ref{lem:V-data:DJ}, the latter means exactly that $A$ is involutive wrt the split Courant--Jacobi algebroid structure on $(A\oplus A^\dagger;L)$ encoded by $(e^{-\Delta})^\ast\Theta_B$.
	Further, since $e^{-\Delta}$ is a Jacobi automorphism with inverse $e^{\Delta}$, from Equations~\eqref{eq:theor:CJ_algebroid_as_MC-element}) and~\eqref{eq:theor:DJ_deformation:MC-elements} it follows that  the following conditions are equivalent:
	\begin{itemize}
		\item $A$ is involutive wrt the split Courant--Jacobi algebroid structure encoded by $(e^{-\Delta})^\ast\Theta$,
		\item $\gr(\eta)$ is involutive wrt the Courant--Jacobi algebroid structure encoded by $\Theta_B$.
	\end{itemize}
	Consequently, $\eta$ is an MC element iff $\gr(\eta)$ is involutive w.r.t~$\Theta_B$.

\end{proof}

\begin{remark}
	\label{rem:deformation_Dirac}
	Theorem~\ref{theor:DJ_deformation:MC-elements} can be seen as the Dirac--Jacobi analogue of the result proved for Dirac structures in~\cite[Theorem 6.1]{liu1997manintriples} (when the complement $B$ is Dirac) and in~\cite[Lemma 2.6]{fregier_zambon_2015} (when the complement is not necessarily involutive).
\end{remark}

\begin{example}
	\label{ex:deformations_precontact/Jacobi_structures}
	Theorem~\ref{theor:DJ_deformation:MC-elements} recovers the well know deformation theory of two special Dirac--Jacobi structures within the Courant--Jacobi algebroid $(E;L)$ given by the omni-Lie algebroid $(\bbD L;L)$.
	
	1) Assume that $A$ is the Dirac--Jacobi structure $DL$, i.e.~the graph of the zero precontact structure.
	Then as $B$ we can choose the complementary \emph{abelian} Dirac--Jacobi structure $J^1L$.
	In this case,
	\begin{itemize}
		\item the Dirac--Jacobi structures transverse to $DL$ are nothing but the precontact structures with underlying line bundle $L\to M$ (cf.~Example~\ref{ex:DJ:omniLie}~\ref{enum:ex:DJ:omniLie:1}),
		\item the deformation $L_\infty[1]$ algebra of $DL$ (up to décalage isomorphism~\eqref{eq:decalage_isomorphism}) boils down to the shifted der-complex $(\Omega^\bullet(L)[1],\rmd_D)$ of Atiyah $L$-valued form (cf.~Example~\ref{ex:der-differential}).
	\end{itemize}
	Now, applying Theorem~\ref{theor:DJ_deformation:MC-elements}, we recover that precontact structures are encoded as $2$-cocycles of  $(\Omega^\bullet(L),\rmd_D)$ in agreement with the identification~\eqref{eq:precontact_structures}.
	
	2) Assume that $A$ is the \emph{abelian} Dirac--Jacobi structure $J^1L$, i.e.~the graph of the zero Jacobi structure.
	Then as $B$ we can pick the complementary Dirac--Jacobi structure $DL$.
	Under these assumptions,
	\begin{itemize}
		\item the Dirac--Jacobi structures transverse to $DL$ are the same thing as the Jacobi structures on $L\to M$ (cf.~Example~\ref{ex:DJ:omniLie}~\ref{enum:ex:DJ:omniLie:2}),
		\item the deformation $L_\infty[1]$ algebra of $J^1L$ (up to the décalage isomorphism~\eqref{eq:decalage_isomorphism}) boils down to the dgLa $((\calD^\bullet L)[1],[-,-]_{\sf{SJ}})$ of multiderivations of $L\to M$ (cf.~Example~\ref{ex:SJ_brackets}).
	\end{itemize}
	So, from Theorem~\ref{theor:DJ_deformation:MC-elements}, we recover that the Jacobi structures are encoded as MC elements of the dgLa $((\calD^\bullet L)[1],[-,-]_{\sf{SJ}})$ in agreement with the identification~\eqref{eq:Jacobi_structures}.
\end{example}

\subsection{Infinitesimal Deformations and Obstructions}

We developed just above the deformation theory of Dirac-Jacobi structures within a fixed Courant--Jacobi algebroid.
In this section, we use it to identify also the infinitesimal deformations and find sufficient criteria for the existence of obstructions.
So let us fix a Dirac--Jacobi structure $A$ in a Courant--Jacobi algebroid $(E;L)$.
\begin{definition}
	\label{def:smooth_deformation}
	A \emph{smooth deformation} of $A$ is a smooth one-parameter family $A_t$ of Dirac--Jacobi structures in $(E;L)$ with $A_0=A$.
\end{definition}

Assume to have a smooth deformation $A_t$ of the Dirac--Jacobi structure $A$ in $(E;L)$.
Upon choosing an almost Dirac--Jacobi structure $B\subset E$ transverse to $A$, we can construct the $L_{\infty}[1]$-algebra $(\Omega^\bullet(A;L)[2],\{\frakm_k\})$, which is the one introduced in Theorem~\ref{theor:deformation_L_infty_algebra} and which governs the deformation problem of the Dirac--Jacobi structure $A$ in $(E;L)$ (see Theorem~\ref{theor:DJ_deformation:MC-elements}) 
Being interested in small deformations of the Dirac--Jacobi structure $A$, one can assume that all $A_t$'s in a smooth deformation of $A$ are transverse to $B$.
Consequently, in view of Theorem~\ref{theor:DJ_deformation:MC-elements}, there is a unique smooth one-parameter family $\eta_t$, with $\eta_0=0$, of MC elements of $(\Omega^\bullet(A;L)[2],\{\frakm_k\})$, such that $A_t=\gr(\eta_t)$.
Differentiating the MC equation for $\eta_t$ at $t=0$, one obtains
\begin{equation*}
	0=\left.\frac{\rmd}{\rmd t}\right|_{t=0}\left(\rmd_{A,L}\eta_t+\frac{1}{2}\frakm_2(\eta_t,\eta_t)+\frac{1}{6}\frakm_3(\eta_t,\eta_t,\eta_t)\right)=\rmd_{A,L} \left(\left.\frac{\rmd}{\rmd t}\right|_{t=0}\eta_t\right).
\end{equation*}
This means that $\dot\eta_0=\left.\frac{\rmd}{\rmd t}\right|_{t=0}\eta_t\in\Omega^2(A;L)$ is $\rmd_{A,L}$-closed and justifies the following definition.

\begin{definition}
	\label{def:infinitesimal_deformation}
	An \emph{infinitesimal deformation} of $A$ is a $2$-cocycle in the complex $(\Omega^\bullet(A;L),\rmd_{A,L})$.
\end{definition}

So each smooth deformation gives rise, as its derivative at $t=0$, to an infinitesimal deformation.
The converse is generally false: there may exist \emph{obstructed infinitesimal deformations}, i.e.~infinitesimal deformations of the Dirac--Jacobi structure $A$ which do not arise from smooth deformations.
If this is the case, the deformation problem of the Dirac--Jacobi structure $A$ is said to be \emph{obstructed}.
This reflects the fact that the space of  $(E;L)$-Dirac--Jacobi structures may fail to be smooth around $A$.
The $L_{\infty}[1]$-algebra $(\Omega^\bullet(A;L)[2],\{\frakm_k\})$ controlling the deformation problem of $A$ gives a criterion for the existence of obstructions.
Indeed, obstructions can be detected by means of the \emph{Kuranishi map}
\begin{equation}
	\label{eq:kuranishi_map}
	\mathrm{Kur}\colon H^2(\Omega^\bullet(A;L),\rmd_{A,L}) \to H^3(\Omega^\bullet(A;L),\rmd_{A,L}),\quad [w]\mapsto [\frakm_2(w,w)].
\end{equation}

\begin{proposition}
	\label{prop:kuranishi_criterion}
	Let $\eta$ be an infinitesimal deformation of $A$.
	If $\operatorname{Kur}[\eta]\neq 0$, then $\eta$ is obstructed.
\end{proposition}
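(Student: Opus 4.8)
The plan is to prove the contrapositive: if $\eta$ is \emph{not} obstructed, i.e.\ it arises as the derivative at $t=0$ of some smooth deformation $A_t$ of $A$, then $\operatorname{Kur}[\eta]=0$. First I would reduce to the Maurer--Cartan picture. Since transversality to $B$ is an open condition and $A_0=A$ is transverse to $B$, after shrinking the parameter interval we may assume every $A_t$ is transverse to $B$; by Theorem~\ref{theor:DJ_deformation:MC-elements} this produces a unique smooth one-parameter family $\eta_t\in\Omega^2(A;L)$ of MC elements of $(\Omega^\bullet(A;L)[2],\{\frakm_k\})$ with $A_t=\gr(\eta_t)$, $\eta_0=0$, and $\dot\eta_0:=\left.\tfrac{\rmd}{\rmd t}\right|_{t=0}\eta_t=\eta$.

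Next I would differentiate the identity
\begin{equation*}
	\rmd_{A,L}\eta_t+\frac{1}{2}\frakm_2(\eta_t,\eta_t)+\frac{1}{6}\frakm_3(\eta_t,\eta_t,\eta_t)=0
\end{equation*}
twice in $t$ and evaluate at $t=0$. As each $\eta_t$ is a $2$-form, hence a degree $0$ element of $\Omega^\bullet(A;L)[2]$, no Koszul signs appear: the first derivative at $t=0$ merely returns $\rmd_{A,L}\eta=0$ (consistently with $\eta$ being an infinitesimal deformation), while the second derivative at $t=0$ gives
\begin{equation*}
	\rmd_{A,L}\ddot\eta_0+\frakm_2(\eta,\eta)=0 .
\end{equation*}
The decisive point is that $\eta_0=0$ annihilates every second-order term still carrying a factor $\eta_0$: in the $\frakm_3$-part \emph{all} second-order terms contain such a factor, and in the $\frakm_2$-part only $\frakm_2(\dot\eta_0,\dot\eta_0)$ survives. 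Hence $\frakm_2(\eta,\eta)=-\rmd_{A,L}\ddot\eta_0$ is $\rmd_{A,L}$-exact, so $\operatorname{Kur}[\eta]=[\frakm_2(\eta,\eta)]=0$ in $H^3(\Omega^\bullet(A;L),\rmd_{A,L})$. Contrapositively, $\operatorname{Kur}[\eta]\neq 0$ forces $\eta$ to be obstructed.

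There is no genuine obstacle in this argument; the two spots that call for a little care are the passage from the family $A_t$ of Dirac--Jacobi structures to the family $\eta_t$ of MC elements (this is exactly Theorem~\ref{theor:DJ_deformation:MC-elements}, once transversality to $B$ is secured on a possibly smaller interval) and the bookkeeping in differentiating the MC equation, where the simplification rests entirely on $\eta_0=0$ killing all cubic contributions and all mixed quadratic ones at second order. Finally, I would note that $\operatorname{Kur}$ is well defined on cohomology, so that the statement makes sense: for $\eta'=\eta+\rmd_{A,L}\beta$ with $\rmd_{A,L}\eta=0$ one has $\frakm_2(\eta',\eta')-\frakm_2(\eta,\eta)=2\frakm_2(\rmd_{A,L}\beta,\eta)+\frakm_2(\rmd_{A,L}\beta,\rmd_{A,L}\beta)$, which is $\rmd_{A,L}$-exact because $\rmd_{A,L}=\frakm_1$ is a graded derivation of $\frakm_2$, $\rmd_{A,L}\eta=0$, and $\rmd_{A,L}^2=0$; thus the Kuranishi map \eqref{eq:kuranishi_map} descends to $H^2\to H^3$ and $\operatorname{Kur}[\eta]$ depends only on $[\eta]$.
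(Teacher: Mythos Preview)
Your proof is correct and follows essentially the same approach as the paper: argue by contrapositive, pass to the smooth family $\eta_t$ of MC elements via Theorem~\ref{theor:DJ_deformation:MC-elements}, and differentiate the MC equation twice at $t=0$ to see that $\frakm_2(\eta,\eta)=-\rmd_{A,L}\ddot\eta_0$ is exact. Your additional remarks on the openness of transversality and on the well-definedness of the Kuranishi map at the cohomology level are correct elaborations that the paper leaves implicit.
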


\begin{proof}
	Assume that $\eta$ is not obstructed, i.e.~it arises from a smooth deformation $A_t$ of $A$.
	Let $\eta_t$ be the smooth $1$-parameter family of MC elements of $(\Omega^\bullet(A;L)[2],\{\frakm_k\})$, with $\eta_0=0$, such that $A_t=\gr(\eta_t)$.
	Then one gets $\eta=\left.\frac{\rmd}{\rmd t}\right|_{t=0}\eta_t$.
	Taking the second derivative of the MC equation for $\eta_t$ at $t=0$, one gets
	\begin{equation*}
		0=\left.\frac{\rmd^2}{\rmd^2 t}\right|_{t=0}\left(\rmd_{A,L}\eta_t+\frac{1}{2}\frakm_2(\eta_t,\eta_t)+\frac{1}{6}\frakm_3(\eta_t,\eta_t,\eta_t)\right)=\rmd_{A,L} \left(\left.\frac{\rmd^2}{\rmd^2 t}\right|_{t=0}\eta_t\right)+\frakm_2\left(\left.\frac{\rmd}{\rmd t}\right|_{t=0}\eta_t,\left.\frac{\rmd}{\rmd t}\right|_{t=0}\eta_t\right).
	\end{equation*}
	So $\frakm_2(\eta,\eta)$ is $\rmd_{A,L}$-exact, with primitive given by $-\left.\frac{\rmd^2}{\rmd^2t}\right|_{t=0}\eta_t$, and this concludes the proof.
\end{proof}

\appendix

\section{\texorpdfstring{A reminder on $L_\infty$ Algebras}{A reminder on L-infty Algebras}}
\label{app:L_infty_algebras}

Even though the paper requires some familiarity with the language of $L_\infty$ algebras~\cite{lada1995strongly} (or equivalently of $L_\infty[1]$ algebras), in this section we review the identification of (morphisms of) $L_\infty[1]$ algebras with (morphisms of) codifferential coalgebras and recall, without proofs, the relevant results about the equivalence of higher derived brackets~\cite{CS}.

\color{black}
\subsection{$L_\infty[1]$-Algebra (Morphisms) as Codifferential Coalgebra (Morphisms)}

Let $C$ be a graded coalgebra with coproduct $\mu:C\to C\otimes C$.
A \emph{degree $k$ coderivation} of $C$ is a degree $k$ graded linear map $X:C\to C$ s.~t.
\begin{equation*}
\mu\circ X=(X\otimes\id+X\circ\id)\circ\mu.
\end{equation*}
The space $\operatorname{CoDer}^k(C)$ of degree $k$ graded coderivations of $C$ has a natural structure of vector space.
Then $\operatorname{CoDer}^\bullet(C):=\oplus_{k\in\bbZ}\operatorname{CoDer}^k(C)$, the $\bbZ$-graded vector space of graded coderivations of $C$, has a natural structure of graded Lie algebra with Lie bracket $[-,-]$ given by the usual graded commutator
\begin{equation*}
[X,Y]=X\circ Y-(-)^{|X||Y|}Y\circ X,
\end{equation*}
for all homogeneous $X,Y\in\operatorname{CoDer}^\bullet(V)$.

Let $V=\oplus_{k\in\bbZ}V^k$ be a graded vector space.
Then its graded symmetric algebra $\sf{S}^\bullet V:=\oplus_{k\in\bbN}\sf{S}^kV$ inherits from the tensor algebra ${\sf T}^\bullet V=\oplus_{k\in\bbN}V^{\otimes k}$ the structure of a graded coalgebra with coproduct $\mu$ given by
\begin{equation*}
\mu(v_1\odot\ldots\odot v_n)=\sum_{i=1}^{n-1}\sum_{\sigma\in{\sf S}_{i,n-i}}\epsilon(\sigma,{\bf v})(v_{\sigma(1)}\odot\ldots\odot v_{\sigma(i)})\otimes (v_{\sigma(i+1)}\odot\ldots\odot v_{\sigma(n)}).
\end{equation*}

\begin{proposition}
	\label{prop:coalgebra_coderivation}
	For any graded vector space $V$, there is a degree $0$ graded linear isomorphism
	\begin{equation*}
	\operatorname{CoDer}^\bullet({\sf S} V)\longrightarrow \Hom^\bullet({\sf S}V,V)=\oplus_{n\in\bbN}\Hom^\bullet({\sf S}^n V,V)
	\end{equation*}
	mapping each $\calQ\in\operatorname{CoDer}^k({\sf S}V)$ to the family $\{\calQ_n\}_{n\in\bbN}\in\oplus_{n\in\bbN}\Hom^k({\sf S}^nV,V)$ given as follows
	\begin{equation*}
	\begin{tikzcd}
	{\sf S}^nV\arrow[rrr, bend right=15, swap, "\calQ_n"] \arrow[r, hook, "\text{incl}"]&{\sf S}^\bullet V\arrow[r, "\calQ"]&{\sf S}^\bullet V\arrow[r, two heads, "\pr_1"]&V
	\end{tikzcd}
	\end{equation*}
	where $\pr_k:{\sf S}^\bullet V\to{\sf S}^kV$ denotes the projection.
	In particular, $\calQ\in\operatorname{CoDer}^k({\sf S}V)$ can be reconstructed out of the family $\{\calQ_n\}_{n\in\bbN}\in\oplus_{n\in\bbN}\Hom^k({\sf S}^nV,V)$ as follows
	\begin{equation*}
	\calQ(v_1\odot\ldots\odot v_n)=\sum_{i=1}^n\sum_{\sigma\in{\sf S}_{i,n-i}}\epsilon(\sigma;{\bf v})\calQ_i(v_{\sigma(1)}\odot\ldots\odot v_{\sigma(i)})\odot v_{\sigma(i+1)}\odot\ldots\odot v_{\sigma(n)}
	\end{equation*}
	for all homogeneous $v_1,\ldots,v_n\in V$.
	Additionally, the linear isomorphism induces a bijection between
	\begin{itemize}
		\item \emph{codifferentials} $\calQ$ of ${\sf S}V$, i.e.~$\calQ\in\operatorname{CoDer}^1({\sf S}V)$ such that $[\calQ,\calQ]\equiv 2\calQ^2=0$,
		\item $L_\infty[1]$ algebra structures $\{\calQ_n\}_{n\in\bbN}$ on $V$.
	\end{itemize}
\end{proposition}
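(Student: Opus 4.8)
The plan is to first establish the linear isomorphism $\operatorname{CoDer}^\bullet({\sf S}V)\to\operatorname{Hom}^\bullet({\sf S}V,V)$ by exhibiting mutually inverse constructions, and then to match the quadratic constraint $[\calQ,\calQ]=0$ with the $L_\infty[1]$ relations on the family of Taylor coefficients $\{\calQ_n\}$. Throughout, the only inputs are the explicit form of the coproduct $\mu$ on ${\sf S}V$ (in particular its coassociativity and cocommutativity) and the combinatorics of $(i,n-i)$-unshuffles together with their Koszul signs $\epsilon(\sigma;{\bf v})$.

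\textbf{The Taylor coefficient map and its inverse.} Given $\calQ\in\operatorname{CoDer}^k({\sf S}V)$, set $\calQ_n:=\pr_1\circ\calQ\circ\text{incl}\colon{\sf S}^nV\to V$; this assignment $\calQ\mapsto\{\calQ_n\}_{n\in\bbN}$ is manifestly $\bbR$-linear and degree preserving. Conversely, given $\{\calQ_n\}_{n\in\bbN}$, define $\calQ$ by the displayed summation over unshuffles; one checks directly that $\mu\circ\calQ=(\calQ\otimes\id+\id\otimes\calQ)\circ\mu$, reducing the verification to a bijection between the index set of $(i,n-i)$-unshuffles refined by an unshuffle of one block and the index set of $(j,i-j,n-i)$-type unshuffles, with the Koszul signs comparing correctly because the symmetric group enters only through the sign representation twisted by the grading. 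That the two constructions are mutually inverse is the standard ``cofreeness'' argument: a coderivation of ${\sf S}V$ is determined by its corestriction $\pr_1\circ\calQ$, which one sees by applying $\mu$ repeatedly and projecting, equivalently by induction on word length $n$, and the reconstruction formula is then forced. The induced bijection on homogeneous components gives the claimed degree $0$ isomorphism of graded vector spaces.

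\textbf{Square-zero codifferentials versus $L_\infty[1]$ structures.} Since $\operatorname{CoDer}^\bullet({\sf S}V)$ is a graded Lie algebra under the commutator and the above isomorphism preserves degree, we transport the bracket to $\operatorname{Hom}^\bullet({\sf S}V,V)$. Computing the $n$-ary Taylor coefficient of $[\calQ,\calQ]$ on $v_1\odot\cdots\odot v_n$ and using the reconstruction formula for both copies of $\calQ$, one obtains exactly the sum, over partitions $\{1,\dots,n\}=I\sqcup J$, of the terms $\pm\,\calQ_{|I|+1}\bigl(\calQ_{|J|}(v_J)\odot v_I\bigr)$ with signs dictated by the unshuffle separating $J$ from $I$; this is precisely the left-hand side of the $n$-th $L_\infty[1]$ relation for $\{\calQ_n\}$. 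Hence for $\calQ\in\operatorname{CoDer}^1({\sf S}V)$ the condition $[\calQ,\calQ]=2\calQ^2=0$ holds if and only if the family $\{\calQ_n\}_{n\in\bbN}$ satisfies all the $L_\infty[1]$ relations, i.e.\ defines an $L_\infty[1]$ algebra structure on $V$.

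\textbf{Main obstacle.} The conceptual content is light; the work is entirely a sign-and-index bookkeeping. The two places requiring care are (i) verifying that the reconstruction formula yields an honest coderivation, and (ii) verifying in the previous step that the commutator of coderivations reproduces the graded Jacobi identities of an $L_\infty[1]$ algebra with the correct Koszul signs. Both reduce to the same ingredient: the interaction of $(p,q)$-unshuffles with the coproduct $\mu$, together with the fact that $\epsilon(\sigma;{\bf v})$ is multiplicative along refinements of unshuffles. Since this computation is classical and identical to the one in the cited references on $L_\infty$ algebras, we only indicate the relevant bijections of index sets and the comparison of signs, leaving the routine check to the reader.
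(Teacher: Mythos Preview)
Your sketch is correct and follows the standard cofreeness argument for the reduced symmetric coalgebra. Note, however, that the paper does not actually give a proof of this proposition: it appears in the appendix ``A reminder on $L_\infty$ algebras,'' where the author explicitly says the results are recalled \emph{without proofs}, so there is no paper proof to compare against beyond the implicit reference to the classical literature (e.g.\ Lada--Stasheff, Lada--Markl), which is precisely the argument you outline.
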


Let $C$ and $C^\prime$ be graded coalgebras with coproducts respectively $\mu$ and $\mu^\prime$.
A degree $k$ graded coalgebra morphism $C\to C^\prime$ is a degree $k$ graded linear map $\Phi:C\to C^\prime$ such that
\begin{equation*}
(\Phi\otimes\Phi)\circ\mu=\mu^\prime\circ\Phi.
\end{equation*}
The space $\Hom^k(C,C^\prime)$ of degree $k$ graded coalgebra morphisms has a natural structure of vector space, and so one can also construct $\Hom^\bullet(C,C^\prime):=\oplus_{k\in\bbZ}\Hom^k(C,C^\prime)$, the graded space of graded coalgebra morphisms.

The identity map $\id_C$ gives a coalgebra morphism $C\to C$, and the composition of two coalgebra morphisms is still a coalgebra morphism.
So, one can also introduce the obvious notion of graded coalgebra isomorphism.

\begin{proposition}
	\label{prop:coalgebra_morphism}
	For any graded vector spaces $V$ and $W$, there exists a (degree $0$) graded linear isomorphism
	\begin{equation*}
	\Hom^\bullet({\sf S} V,{\sf S} W)\longrightarrow \Hom^\bullet({\sf S}V,W)=\oplus_{n\in\bbN}\Hom^\bullet({\sf S}^n V,W)
	\end{equation*}
	mapping each $\Phi\in\Hom^k({\sf S}V,{\sf S}W)$ to the family $\{\Phi_n\}_{n\in\bbN}\in\oplus_{n\in\bbN}\Hom^k({\sf S}^nV,W)$ given as follows
	\begin{equation*}
	\begin{tikzcd}
	{\sf S}^nV\arrow[rrr, bend right=15, swap, "\Phi_n"] \arrow[r, hook, "\text{incl}"]&{\sf S}^\bullet V\arrow[r, "\Phi"]&{\sf S}^\bullet W\arrow[r, two heads, "\pr_1"]&W
	\end{tikzcd}
	\end{equation*}
	In particular, $\Phi$ can be reconstructed out of the family $\{\Phi_n\}_{n\in\bbN}$ as follows
	\begin{equation*}
	\Phi(v_1\odot\ldots\odot v_n)=\sum_{i=1}^n\sum_{p_1+\ldots+p_i=n}\sum_{\sigma\in{\sf S}_n}\frac{\epsilon(\sigma;{\bf v})}{i!p_1!\cdots p_i!}\Phi_{p_1}(v_{\sigma(1)}\odot\ldots\odot v_{\sigma(p_1)})\odot\cdots\odot \Phi_{p_i}(v_{\sigma(p_{i-1}+1)}\odot\ldots\odot v_{\sigma(n)})
	\end{equation*}
	for all homogeneous $v_1,\ldots,v_n\in V$.
	Additionally, it turns out that the graded coalgebra morphism $\Phi:{\sf S}V\to{\sf S}W$ is an invertible iff the graded linear map $\Phi_1:V\to W$ is invertible.
\end{proposition}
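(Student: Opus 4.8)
The plan is to construct the bijection $\Phi\mapsto\{\Phi_n\}$ by hand --- injectivity from conilpotency of the symmetric coalgebra, surjectivity from the displayed reconstruction formula --- and then to extract the invertibility criterion by tracking Taylor coefficients under composition. First, observe that sending a degree $k$ graded coalgebra morphism $\Phi\colon{\sf S}V\to{\sf S}W$ to the family $\Phi_n:=\pr_1\circ\Phi\circ(\text{inclusion }{\sf S}^nV\hookrightarrow{\sf S}V)$ is visibly $\bbR$-linear in $\Phi$ and degree preserving, hence defines a degree $0$ graded linear map $\Hom^\bullet({\sf S}V,{\sf S}W)\to\Hom^\bullet({\sf S}V,W)$; what remains is bijectivity together with the statement that $\Phi$ is invertible iff $\Phi_1$ is.

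For injectivity I would exploit that ${\sf S}W$ is conilpotent and cocommutative. Writing $\mu^{(m-1)}$ and $(\mu')^{(m-1)}$ for the $(m-1)$-fold iterated reduced coproducts, one checks that the composite ${\sf S}^nW\to W^{\otimes n}$ given by $\pr_1^{\otimes n}\circ(\mu')^{(n-1)}$ is the full symmetrization map $w_1\odot\cdots\odot w_n\mapsto\sum_{\sigma\in{\sf S}_n}\pm\,w_{\sigma(1)}\otimes\cdots\otimes w_{\sigma(n)}$, in particular injective in characteristic zero; thus the maps $\pr_1^{\otimes m}\circ(\mu')^{(m-1)}$, $m\geq1$, jointly separate points of ${\sf S}W$. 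Since $\Phi$ intertwines coproducts, $(\mu')^{(m-1)}\circ\Phi=\Phi^{\otimes m}\circ\mu^{(m-1)}$, whence $\pr_1^{\otimes m}\circ(\mu')^{(m-1)}\circ\Phi=(\pr_1\circ\Phi)^{\otimes m}\circ\mu^{(m-1)}$, and the right-hand side depends only on $\pr_1\circ\Phi$, i.e.\ only on $\{\Phi_n\}$. So $\Phi$ is determined by its Taylor coefficients.

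For surjectivity I would start from an arbitrary family $\{\Phi_n\}$, \emph{define} $\Phi$ by the reconstruction formula in the statement, and check two things: (i) $\pr_1\circ\Phi|_{{\sf S}^nV}=\Phi_n$, which is immediate because $\pr_1$ annihilates every summand with $i\geq2$ and the surviving $i=1$, $p_1=n$ term, summed over ${\sf S}_n$ with the Koszul signs $\epsilon(\sigma;\mathbf v)$ and divided by $n!$, collapses to $\Phi_n$ by graded symmetry; and (ii) $(\Phi\otimes\Phi)\circ\mu=\mu'\circ\Phi$. Step (ii) is the combinatorial core and the place where I expect the real work: on a monomial $v_1\odot\cdots\odot v_n$ both sides unfold into sums indexed by a partition of $\{1,\dots,n\}$ into ordered blocks (each block fed into some $\Phi_{p_j}$) together with a splitting of the block-collection into an ordered pair of sub-collections, and one must verify that the weights $1/(i!\,p_1!\cdots p_i!)$ and the signs $\epsilon(\sigma;\mathbf v)$ are exactly calibrated so that the two unfoldings agree term by term. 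If one wishes to bypass this bookkeeping, the same conclusion --- existence and uniqueness of $\Phi$ with prescribed $\{\Phi_n\}$ --- follows at once from the universal property of the cofree conilpotent cocommutative coalgebra $({\sf S}W,\pr_1)$ on $W$, which also subsumes the previous paragraph.

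For the invertibility criterion I would note that, transporting composition of coalgebra morphisms through the bijection, the degree $1$ Taylor coefficient of a composite is $(\Psi\circ\Phi)_1=\Psi_1\circ\Phi_1$ (since $\Phi$ maps ${\sf S}^1V$ into ${\sf S}^1W$ and $\pr_1\circ\Psi$ sees only ${\sf S}^1W$). Hence if $\Phi$ has a coalgebra inverse $\Psi$ then $\Psi_1\circ\Phi_1=\id_V$ and $\Phi_1\circ\Psi_1=\id_W$, so $\Phi_1$ is invertible. Conversely, assuming $\Phi_1$ invertible, I would build $\Psi$ recursively: put $\Psi_1:=\Phi_1^{-1}$, and for $n\geq2$ observe that the degree $n$ coefficient of $\Psi\circ\Phi$ equals $\Psi_n\circ\Phi_1^{\odot n}$ plus a correction depending only on $\Psi_1,\dots,\Psi_{n-1}$ and $\Phi_1,\dots,\Phi_n$; since $\Phi_1^{\odot n}$ is invertible, this solves uniquely for $\Psi_n$ so that $\Psi\circ\Phi=\id_{{\sf S}V}$. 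Then $\Psi_1=\Phi_1^{-1}$ is again invertible, so by the same construction $\Psi$ admits a left coalgebra inverse, which by associativity coincides with $\Phi$; therefore $\Phi\circ\Psi=\id_{{\sf S}W}$ as well, and $\Phi$ is a coalgebra isomorphism, completing the proof.
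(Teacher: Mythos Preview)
The paper does not actually prove this proposition: the appendix explicitly states that it ``review[s] the identification of (morphisms of) $L_\infty[1]$ algebras with (morphisms of) codifferential coalgebras and recall[s], \emph{without proofs}, the relevant results,'' and indeed Proposition~\ref{prop:coalgebra_morphism} is followed immediately by Remark~\ref{rem:L_infinity_algebra_iso} with no proof environment in between.

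Your proposal is a correct and complete outline of the standard argument. The injectivity step via iterated reduced coproducts and conilpotency is sound; your observation that $\Phi$ preserves primitives (since $\mu(v)=0$ for $v\in{\sf S}^1V$ in the paper's reduced coproduct) correctly justifies $(\Psi\circ\Phi)_1=\Psi_1\circ\Phi_1$; and the recursive construction of the inverse, together with the ``left inverse of a left inverse'' trick, is the right way to finish. The only place you leave real work undone is the combinatorial verification of $(\Phi\otimes\Phi)\circ\mu=\mu'\circ\Phi$ in step~(ii), but you flag this honestly and your fallback to the universal property of the cofree conilpotent cocommutative coalgebra on $W$ is entirely legitimate and in fact the cleanest route. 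Since the paper offers nothing to compare against, your write-up stands on its own as a valid proof.
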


\begin{remark}
	\label{rem:L_infinity_algebra_iso}
	Let $(V,\{\calQ_n\}_{n\in\bbN})$ and $(V^\prime,\{\calQ^\prime_n\}_{n\in\bbN})$ be $L_\infty[1]$-algebras, with $\calQ$ and $\calQ^\prime$ the corresponding codifferentials of respectively ${\sf S}V$ and ${\sf S}V^\prime$.
	Then a \emph{$L_\infty[1]$-algebra (iso)morphism} $(V,\{\calQ_n\}_{n\in\bbN})\longrightarrow(V^\prime,\{\calQ^\prime_n\}_{n\in\bbN})$ is a degree $0$ codifferential graded coalgebra (iso)morphism $\Phi:({\sf S}V,\calQ)\to({\sf S}V^\prime,\calQ^\prime)$, i.e.~a degree $0$ graded coalgebra (iso)morphism $\Phi:{\sf S}V\to{\sf S}V^\prime$ such that $\calQ^\prime\circ\Phi=\Phi\circ\calQ$.
\end{remark}

\subsection{Equivalences of Higher Derived Brackets}
\label{sec:equivalence_higher_derived_brackets:CS}
Let us start recalling the notion of V-data.
\begin{definition}[{\cite[Definition 1.7]{fregier_zambon_2015}}]
	\label{def:V-data}
	A \emph{V-data} $(\frakh,\fraka,P,\Theta)$ consists of:
	\begin{itemize}
		\item a graded Lie algebra $\frakh$, with Lie bracket $[-,-]$,
		\item an abelian graded Lie subalgebra $\fraka\subset\frakh$ (so that $[\fraka,\fraka]=0$),
		\item a projection $P:\frakh\to\fraka$ such that $\ker P\subset\frakh$ is a graded Lie subalgebra,
		\item a MC element $\Theta$ of $\frakh$ such that $P(\Theta)=0$. 
	\end{itemize}
	Removing the condition $\Theta\in\ker P$, one obtains what is called a \emph{curved V-data}.
\end{definition}
Following~\cite{voronov2005higher}, one can use a  V-data $(\frakh,\fraka,P,\Theta)$ to cook up an $L_\infty[1]$-algebra structure on $\fraka$.

\begin{proposition}[{\cite{voronov2005higher}}]
	\label{prop:higher_derived_brackets:Dirac}
	A V-data $(\frakh,\fraka,P,\Theta)$ determines an $L_\infty[1]$-algebra structure on $\fraka$ whose multibrackets $\calQ_k\in\Hom^1({\sf S}^k\fraka,\fraka)$ are given by the following higher derived brackets, for all $a_1,\ldots,a_k\in\fraka$,
	\begin{equation*}
	\calQ_k(a_1\odot\cdots\odot a_k)=P[[\ldots[\Theta,a_1],\ldots],a_n].
	\end{equation*}
	Clearly, the latter corresponds to a codifferential $\calQ$ of the ${\sf S}\fraka$, the symmetric coalgebra of $\fraka$, in view of Proposition~\ref{prop:coalgebra_coderivation}.
	
\end{proposition}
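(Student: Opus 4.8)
The plan is to verify, via Proposition~\ref{prop:coalgebra_coderivation}, that the higher derived brackets $\{\calQ_k\}_{k\in\bbN}$ assemble into an $L_\infty[1]$-algebra structure on $\fraka$; equivalently, that the induced degree $1$ coderivation $\calQ$ of ${\sf S}\fraka$ is a codifferential, $\calQ^2=0$. Before the squaring relation I would record two preliminaries. First, each $\calQ_k$ is well defined on ${\sf S}^k\fraka$: the iterated bracket $P[[\ldots[\Theta,a_1],\ldots],a_k]$ is graded symmetric in $a_1,\ldots,a_k$, because swapping two adjacent entries $a_i,a_{i+1}$ costs only the Koszul sign $(-)^{|a_i||a_{i+1}|}$, the correction term predicted by the graded Jacobi identity involving the factor $[a_i,a_{i+1}]$, which vanishes since $\fraka$ is abelian. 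Second, $\calQ_k$ has intrinsic degree $1$ (as $\Theta$ has degree $1$ in $\frakh$ and bracketing with elements of $\fraka$ preserves degree), and $\calQ_0=P(\Theta)=0$ by the V-data axiom, so there is no curvature.

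For the codifferential property, set $D:=[\Theta,-]$, a degree $1$ derivation of the graded Lie algebra $\frakh$; since $[[\Theta,\Theta],-]=2[\Theta,[\Theta,-]]$ ($\Theta$ being odd) and $[\Theta,\Theta]=0$, one gets $D^2=0$, so $(\frakh,[-,-],D)$ is a differential graded Lie algebra. I would then carry out Voronov's computation from \cite[Theorem~1]{voronov2005higher}: expand the arity-$N$ generalized Jacobi combination $\sum_{i+j=N}\sum_{\sigma}\pm\,\calQ_{j+1}\!\big(\calQ_i(a_{\sigma(1)},\ldots,a_{\sigma(i)}),a_{\sigma(i+1)},\ldots,a_{\sigma(N)}\big)$, substitute the definition of each $\calQ$, and simplify using three moves: (i) the graded Jacobi identity in $\frakh$, to commute nested brackets past one another; (ii) abelianity of $\fraka$, to discard every bracket $[a_r,a_s]$; and (iii) the V-data hypotheses that $\ker P$ is a Lie subalgebra \emph{and} $\Theta\in\ker P$, which together imply $[\Theta,\ker P]\subseteq\ker P$ and are exactly what annihilates, under the outermost $P$, the $(1-P)$-corrections arising when one rewrites each inner $P[\ldots]$ as $[\ldots]-(1-P)[\ldots]$. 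After reorganizing the alternating shuffle sum, everything collapses to a scalar multiple of $P[[\ldots[[\Theta,\Theta],a_1],\ldots],a_N]=0$, which is the desired $L_\infty[1]$ relation in arity $N$. The closing assertion of the statement is then immediate from Proposition~\ref{prop:coalgebra_coderivation}: the family $\{\calQ_k\}$ corresponds to a coderivation $\calQ$ of ${\sf S}\fraka$, and the relations just established say precisely $[\calQ,\calQ]=2\calQ^2=0$.

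The main obstacle is step (iii) together with the sign-and-shuffle bookkeeping. The delicate point is that $[\fraka,\ker P]$ need not lie in $\ker P$, so the $(1-P)$-corrections cannot be discarded term by term; their cancellation is global, surfacing only once the alternating sum over all shuffle splittings is assembled and telescoped, and it genuinely uses $\Theta\in\ker P$ (not merely $\ker P$ being a subalgebra). Carrying the Koszul signs and permutations through this telescoping is the technical heart of the proof --- routine but lengthy --- so in practice I would either reproduce that computation verbatim or, given that this appendix is meant to recall background without proofs, simply cite \cite{voronov2005higher}. A cleaner alternative worth noting is the Maurer--Cartan route: one checks that $a\mapsto P\big(e^{-\ad_a}\Theta\big)$ is the Maurer--Cartan function of the candidate brackets and extracts the $L_\infty[1]$ relations from the behaviour of the flow $e^{-\ad_a}$ under $D$, trading the combinatorics for a formal-flow argument.
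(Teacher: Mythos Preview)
Your outline is a faithful sketch of Voronov's original argument in \cite{voronov2005higher}: graded symmetry from abelianity of $\fraka$, degree count, and the shuffle/Jacobi telescoping that reduces the generalized Jacobi identity to $P[[\ldots[[\Theta,\Theta],a_1],\ldots],a_N]=0$, with the $(1-P)$-corrections killed globally using $\Theta\in\ker P$ and $[\ker P,\ker P]\subset\ker P$. The alternative Maurer--Cartan/flow reformulation you mention is also standard and correct.

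That said, the paper does not prove this proposition at all. It appears in Appendix~\ref{app:L_infty_algebras}, whose stated purpose is to ``recall, without proofs, the relevant results about the equivalence of higher derived brackets''; the proposition is simply attributed to \cite{voronov2005higher}. You anticipated this yourself in your final paragraph. So there is no discrepancy of approach to discuss: you have supplied a proof where the paper, by design, supplies only a citation.
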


Let us fix, from now on, a set of V-data $(\frakh,\fraka,P,\Theta)$ and construct the associated $L_\infty[1]$-algebra structure $\{\calQ_k\}_{k\in\bbN}$ on $\fraka$ with corresponding codifferential $\calQ$ of ${\sf S}\fraka$.
Let us also fix a degree $0$ graded Lie algebra derivation ${\sf m}$ of $\frakh$ which can be integrated to a $1$-parameter group of graded Lie algebra automorphisms $\phi_\varepsilon$ of $\frakh$, \emph{the flow of ${\sf m}$}, i.e.~the solution of the following Cauchy problem
\begin{equation*}
\frac{\rmd}{\rmd\varepsilon}\phi_\varepsilon={\sf m}\circ\phi_\varepsilon,\quad\phi_0=\id.
\end{equation*}
From now on we will assume that the derivation ${\sf m}$ satisfies the following conditions:
\begin{enumerate}[label=\arabic*)]
	\item\label{enumitem:CS:1} ${\sf m}$ preserves $\ker P$, i.e.~$P\circ{\sf m}\circ P=P\circ {\sf m}$,
	\item\label{enumitem:CS:2} $\lambda_\epsilon=0$ is the only solution to the Cauchy problem $\frac{\rmd}{\rmd\varepsilon}\lambda_\varepsilon=P{\sf m}\lambda_\varepsilon$, $\lambda_0=0,$
\end{enumerate}
Notice that Conditions~\ref{enumitem:CS:1} and~\ref{enumitem:CS:2} imply that the flow $\varphi_\varepsilon$ preserves $\ker P$, i.e.~$P\circ\phi_\varepsilon\circ P=P\circ\phi_\varepsilon$.

For each $\varepsilon$ we can introduce the new MC element $\Theta(\varepsilon)$ of $\frakh$ given by $\Theta(\varepsilon)=\phi_\varepsilon\Theta$.
Since $\Theta(\varepsilon)\in\ker P$, we also get a new V-data $(\frakh,\fraka,P,\Theta(\varepsilon))$, and the associated $L_\infty[1]$-algebra structure $\{\calQ(\varepsilon)_k\}_{k\in\bbN}$ on $\fraka$ with corresponding codifferential $\calQ(\varepsilon)$ of ${\sf S}\fraka$.
Obviously, $\Theta(0)=\Theta$, and so $\calQ(0)=\calQ$.

In view of Proposition~\ref{prop:coalgebra_coderivation}, there exists a unique degree $0$ graded coalgebra coderivation ${\sf M}$ of ${\sf S}\fraka$ such that the corresponding family $\{{\sf M}_n\}_{n\in\bbN}\in\oplus_{n\in\bbN}\Hom^0({\sf S}^n\fraka,\fraka)$ is given as follows, for all $a_1,\ldots,a_n\in\fraka$,
\begin{equation*}
{\sf M}_n(a_1\odot\cdots\odot a_n)=P[[\ldots[{\sf m}a_1,a_2],\ldots],a_n].
\end{equation*}
Since ${\sf m}$ satisfies Conditions~\ref{enumitem:CS:1} and~\ref{enumitem:CS:2}, the following is a consequence of Proposition 3.2 in~\cite{CS}.
\begin{proposition}
	\label{prop:coderivation_flow:CS}
	${\sf M}$ integrates to a $1$-parameter group of coalgebra automorphisms $U(\varepsilon)$ of ${\sf S}\fraka$, the \emph{flow of ${\sf M}$}, i.e.~there is a unique solution of the following Cauchy problem
	\begin{equation*}
	\frac{\rmd}{\rmd\varepsilon}U(\varepsilon)={\sf M}\circ U(\varepsilon),\quad U(0)=\id.
	\end{equation*}
\end{proposition}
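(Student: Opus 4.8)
The plan is to integrate ${\sf M}$ after isolating its \emph{length‑preserving} part. By Proposition~\ref{prop:coalgebra_coderivation}, ${\sf M}$ is determined by the family $\{{\sf M}_n\}$; since ${\sf M}_0=0$ and ${\sf M}_n({\sf S}^n\fraka)\subseteq\fraka$ for $n\ge1$, the coderivation ${\sf M}$ sends ${\sf S}^m\fraka$ into $\bigoplus_{n\le m}{\sf S}^n\fraka$, so it preserves the exhausting increasing filtration $F_p:=\bigoplus_{n\le p}{\sf S}^n\fraka$ of ${\sf S}\fraka$, and on the associated graded $F_p/F_{p-1}\cong{\sf S}^p\fraka$ it acts by the Leibniz extension of the single degree $0$ operator ${\sf M}_1=P\circ{\sf m}|_\fraka\colon\fraka\to\fraka$. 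Thus ${\sf M}$ fails to be pronilpotent only through ${\sf M}_1$, and the whole point of Conditions~\ref{enumitem:CS:1}--\ref{enumitem:CS:2} is that they make ${\sf M}_1$ integrable.

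First I would integrate ${\sf M}_1$ on $\fraka$. Because ${\sf m}$ preserves $\ker P$ (Condition~\ref{enumitem:CS:1}) and, together with Condition~\ref{enumitem:CS:2}, its flow $\phi_\varepsilon$ on $\frakh$ preserves $\ker P$ as well (as observed just before the statement), the assignment $\psi_\varepsilon:=P\circ\phi_\varepsilon|_\fraka\colon\fraka\to\fraka$ is well defined and is a one‑parameter group: identifying $\fraka$ with $\frakh/\ker P$ through $P$, the group law follows from $\phi_\varepsilon(\ker P)\subseteq\ker P$. Differentiating and using $P\circ{\sf m}\circ P=P\circ{\sf m}$ gives $\frac{\rmd}{\rmd\varepsilon}\psi_\varepsilon={\sf M}_1\circ\psi_\varepsilon$, $\psi_0=\id$, and Condition~\ref{enumitem:CS:2} makes this the \emph{unique} solution. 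Applying $\psi_\varepsilon$ factorwise, $a_1\odot\cdots\odot a_p\mapsto\psi_\varepsilon(a_1)\odot\cdots\odot\psi_\varepsilon(a_p)$, defines a one‑parameter group $\Psi(\varepsilon)$ of coalgebra automorphisms of ${\sf S}\fraka$ (Proposition~\ref{prop:coalgebra_morphism}, with $\Psi(\varepsilon)_1=\psi_\varepsilon$ invertible), and $\Psi(\varepsilon)$ is the flow of the coderivation ${\sf M}^{(1)}$ whose only Taylor coefficient is ${\sf M}_1$.

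Next I would deal with the remainder by an interaction‑picture argument. Put ${\sf N}:={\sf M}-{\sf M}^{(1)}$, which has Taylor coefficients ${\sf M}_n$ for $n\ge2$ (and ${\sf M}_0=0$), hence strictly decreases symmetric word length and so is nilpotent on every $F_p$; the same holds for the conjugates ${\sf N}_\varepsilon:=\Psi(\varepsilon)^{-1}\circ{\sf N}\circ\Psi(\varepsilon)$, since $\Psi(\varepsilon)$ preserves each ${\sf S}^n\fraka$, and each ${\sf N}_\varepsilon$ is again a coderivation. Consequently the Cauchy problem $\frac{\rmd}{\rmd\varepsilon}W(\varepsilon)={\sf N}_\varepsilon\circ W(\varepsilon)$, $W(0)=\id$, has a unique solution, given on each $F_p$ by a terminating Picard (Dyson) series, and $W(\varepsilon)$ is a coalgebra morphism because its driving operators ${\sf N}_\varepsilon$ are coderivations. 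Now set $U(\varepsilon):=\Psi(\varepsilon)\circ W(\varepsilon)$: it is a composite of coalgebra morphisms, and from the defining equations of $\Psi$ and $W$ together with $\Psi(\varepsilon)\circ{\sf N}_\varepsilon={\sf N}\circ\Psi(\varepsilon)$ one computes $\frac{\rmd}{\rmd\varepsilon}U(\varepsilon)={\sf M}^{(1)}\circ U(\varepsilon)+{\sf N}\circ U(\varepsilon)={\sf M}\circ U(\varepsilon)$ with $U(0)=\id$. Since $U(\varepsilon)_1=\psi_\varepsilon$ is invertible, $U(\varepsilon)$ is an automorphism by Proposition~\ref{prop:coalgebra_morphism}; the group law $U(\varepsilon_1)\circ U(\varepsilon_2)=U(\varepsilon_1+\varepsilon_2)$ and the uniqueness asserted in the statement both follow by the filtered induction on $F_p$, the associated‑graded Cauchy problem on ${\sf S}^p\fraka$ having unique solution by comparison with $\psi_\varepsilon$ applied factorwise, which again rests on Condition~\ref{enumitem:CS:2}.

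The main obstacle is exactly the construction of the flow $\psi_\varepsilon$ on $\fraka$: ${\sf M}$ is not pronilpotent, so a naive exponentiation of ${\sf M}$ diverges, and everything hinges on converting the given integrability of ${\sf m}$ on $\frakh$ into a genuine flow on $\fraka$ — which is precisely what Conditions~\ref{enumitem:CS:1}--\ref{enumitem:CS:2} are tailored to provide. Once $\psi_\varepsilon$ is available, the interaction‑picture splitting is routine; the only remaining care is bookkeeping the symmetric‑algebra combinatorics and signs in $W(\varepsilon)$ and in the associated‑graded operators. This reproduces Proposition~3.2 of~\cite{CS}.
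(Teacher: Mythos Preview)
Your argument is correct and complete. The paper itself does not prove this proposition: it simply records it as a consequence of Proposition~3.2 in~\cite{CS}, in keeping with the appendix's stated aim of recalling the relevant results from~\cite{CS} ``without proofs.'' What you have written is a faithful reconstruction of the Cattaneo--Sch\"atz argument --- splitting off the length-preserving part ${\sf M}^{(1)}$ governed by ${\sf M}_1=P\circ{\sf m}|_\fraka$, integrating it via the induced flow $\psi_\varepsilon=P\circ\phi_\varepsilon|_\fraka$ on $\fraka\cong\frakh/\ker P$ (this is exactly where Conditions~\ref{enumitem:CS:1}--\ref{enumitem:CS:2} enter), and then handling the strictly length-decreasing remainder ${\sf N}$ by a terminating Dyson series in the interaction picture. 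Your verification that $\psi_\varepsilon$ is a one-parameter group (because $\phi_\varepsilon$ preserves $\ker P$), that $\frac{\rmd}{\rmd\varepsilon}\psi_\varepsilon={\sf M}_1\circ\psi_\varepsilon$ (because ${\sf m}$ preserves $\ker P$), and that $W(\varepsilon)$ is a coalgebra morphism (by uniqueness of the associated ODE on ${\sf S}\fraka\otimes{\sf S}\fraka$) are all sound. There is nothing to compare against in the paper beyond the citation, and you have supplied precisely the missing content.
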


Finally we are in condition to state the main result about the equivalence of higher derived brackets.

\begin{theorem}[{\cite[Theorem 3.2]{CS}}]
	\label{theor:CS}
	For each $\varepsilon$, the graded coalgebra automorphism $U(\varepsilon)$ of ${\sf S}\fraka$ gives a codifferential graded coalgebra isomorphism
	\begin{equation*}
	U(\varepsilon):({\sf S}\fraka,\calQ(0))\longrightarrow ({\sf S}\fraka,\calQ(\varepsilon))
	\end{equation*}
	or equivalently an $L_\infty[1]$-algebra isomorphism $\{U(\varepsilon)_n\}_{n\in\bbN}$ from $(\fraka,\{\calQ(0)_n\}_{n\in\bbN})$ to $(\fraka,\{\calQ(\varepsilon)_n\}_{n\in\bbN})$.
\end{theorem}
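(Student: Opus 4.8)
The plan is to run a \emph{flow argument}: I will show that the two one-parameter families of degree $1$ coderivations of ${\sf S}\fraka$ given by $\varepsilon\mapsto\calQ(\varepsilon)$ and by $\varepsilon\mapsto U(\varepsilon)\circ\calQ(0)\circ U(\varepsilon)^{-1}$ coincide. This equality is exactly the intertwining relation $\calQ(\varepsilon)\circ U(\varepsilon)=U(\varepsilon)\circ\calQ(0)$ asserted by the theorem; once it is in hand, the $L_\infty[1]$-isomorphism reformulation $\{U(\varepsilon)_n\}$ is a direct translation through Proposition~\ref{prop:coalgebra_morphism} and Remark~\ref{rem:L_infinity_algebra_iso}, and, since $\calQ(0)$ is a codifferential (Voronov, Proposition~\ref{prop:higher_derived_brackets:Dirac}), $\calQ(\varepsilon)=U(\varepsilon)\calQ(0)U(\varepsilon)^{-1}$ is automatically a codifferential too.

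First I would record the formal facts. By Proposition~\ref{prop:coderivation_flow:CS} the coderivation ${\sf M}$ integrates to a one-parameter group $U(\varepsilon)$ of coalgebra \emph{automorphisms} of ${\sf S}\fraka$, so each $U(\varepsilon)$ is invertible with $\tfrac{\rmd}{\rmd\varepsilon}U(\varepsilon)={\sf M}\circ U(\varepsilon)$, hence $\tfrac{\rmd}{\rmd\varepsilon}U(\varepsilon)^{-1}=-U(\varepsilon)^{-1}\circ{\sf M}$. Since conjugating a coderivation by a coalgebra automorphism again yields a coderivation, $W(\varepsilon):=U(\varepsilon)\circ\calQ(0)\circ U(\varepsilon)^{-1}$ is a degree $1$ coderivation with $W(0)=\calQ(0)$ and, by the product rule, $\tfrac{\rmd}{\rmd\varepsilon}W(\varepsilon)=[{\sf M},W(\varepsilon)]$ (graded commutator of a degree $0$ and a degree $1$ coderivation). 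The goal then becomes to show that $\calQ(\varepsilon)$ satisfies the \emph{same} first-order equation with the same initial datum; rather than invoke uniqueness of solutions in an infinite-dimensional space, I would close the argument by a conjugation trick: setting $Y(\varepsilon):=U(\varepsilon)^{-1}\circ\calQ(\varepsilon)\circ U(\varepsilon)$ and differentiating, the three resulting terms combine (using $[{\sf M},\calQ(\varepsilon)]={\sf M}\calQ(\varepsilon)-\calQ(\varepsilon){\sf M}$) to give $\tfrac{\rmd}{\rmd\varepsilon}Y(\varepsilon)=0$, so $Y(\varepsilon)\equiv Y(0)=\calQ(0)$, i.e.\ $\calQ(\varepsilon)=U(\varepsilon)\circ\calQ(0)\circ U(\varepsilon)^{-1}$.

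The crux, and the only non-formal ingredient, is the \emph{key lemma}
\[
\tfrac{\rmd}{\rmd\varepsilon}\,\calQ(\varepsilon)=[{\sf M},\calQ(\varepsilon)]\qquad\text{in }\operatorname{CoDer}^1({\sf S}\fraka).
\]
By Proposition~\ref{prop:coalgebra_coderivation} it suffices to verify the corresponding equality of Taylor coefficients. On the left-hand side, since $\Theta(\varepsilon)=\phi_\varepsilon\Theta$ solves $\tfrac{\rmd}{\rmd\varepsilon}\Theta(\varepsilon)={\sf m}\Theta(\varepsilon)$, differentiating the derived-bracket formula gives $\big(\tfrac{\rmd}{\rmd\varepsilon}\calQ(\varepsilon)\big)_n(a_1\odot\dots\odot a_n)=P[[\dots[{\sf m}\Theta(\varepsilon),a_1],\dots],a_n]$. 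On the right-hand side, expanding $[{\sf M},\calQ(\varepsilon)]={\sf M}\circ\calQ(\varepsilon)-\calQ(\varepsilon)\circ{\sf M}$ through the symmetric coproduct of ${\sf S}\fraka$ and extracting the $\fraka$-component produces a large alternating sum of nested commutators; one then shows it telescopes to the left-hand side by repeated use of (i) the graded Jacobi identity in $\frakh$, (ii) the derivation property ${\sf m}[x,y]=[{\sf m}x,y]+[x,{\sf m}y]$, (iii) the hypothesis $P\circ{\sf m}\circ P=P\circ{\sf m}$, i.e.\ that ${\sf m}$ preserves $\ker P$ (condition~\ref{enumitem:CS:1}), and (iv) $\Theta(\varepsilon)\in\ker P$ (which follows from condition~\ref{enumitem:CS:2} together with $\Theta\in\ker P$). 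This combinatorial bookkeeping is exactly the computation carried out in~\cite[Section~3]{CS}.

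I expect this last step to be the main obstacle: everything else is routine flow calculus, but matching the two families of Taylor coefficients is a delicate reorganization of iterated brackets under the symmetric coproduct, and it is precisely there that the structural hypotheses~\ref{enumitem:CS:1}--\ref{enumitem:CS:2} on ${\sf m}$ enter. A minor point to watch is that the one-parameter objects $\phi_\varepsilon$ and $U(\varepsilon)$ are genuinely well defined — this is the role of Proposition~\ref{prop:coderivation_flow:CS} and of the standing hypotheses — and that differentiating $\varepsilon\mapsto\calQ(\varepsilon)$ may be performed on each element of a fixed symmetric power ${\sf S}^n\fraka$, where only finitely many Taylor coefficients contribute, so no convergence issue arises.
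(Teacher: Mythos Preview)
The paper does not give its own proof of this statement: Theorem~\ref{theor:CS} is quoted from~\cite[Theorem~3.2]{CS}, and the appendix explicitly says it recalls these results ``without proofs''. So there is nothing in the paper to compare against.

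That said, your outline is correct and is essentially the argument of Cattaneo--Sch\"atz. The reduction to the key lemma $\tfrac{\rmd}{\rmd\varepsilon}\calQ(\varepsilon)=[{\sf M},\calQ(\varepsilon)]$ via the conjugation $Y(\varepsilon)=U(\varepsilon)^{-1}\calQ(\varepsilon)U(\varepsilon)$ is exactly their strategy, and you have correctly identified where the hypotheses enter: condition~\ref{enumitem:CS:1} (preservation of $\ker P$) is what makes the Taylor-coefficient identity telescope, while condition~\ref{enumitem:CS:2} is what underlies Proposition~\ref{prop:coderivation_flow:CS} guaranteeing that $U(\varepsilon)$ exists. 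One small clarification: $\Theta(\varepsilon)\in\ker P$ follows already from condition~\ref{enumitem:CS:1} (which forces the flow $\phi_\varepsilon$ to preserve $\ker P$, as the paper notes just before introducing $\Theta(\varepsilon)$) together with $\Theta\in\ker P$; condition~\ref{enumitem:CS:2} is not needed for that particular point. Since you defer the combinatorial verification of the key lemma to~\cite[Section~3]{CS} anyway, your write-up is really a reader's guide to their proof rather than an independent one.
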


\small
\addtocontents{toc}{\SkipTocEntry}
\section*{Acknowledgements}

The author is grateful to Paulo Antunes, Joana Nunes da Costa and Luca Vitagliano for useful discussions and helpful suggestions.
He has been supported by an FWO postdoctoral fellowship during the preparation of this paper.
Further, he is member of the National Group for Algebraic and Geometric Structures, and their Applications (GNSAGA – INdAM) and is partially supported by CMUP, which is financed by national funds through FCT – Fundação para a Ciência e a Tecnologia, I.P., under the project with reference UIDB/00144/2020.
The author also acknowledge the partial support by the FWO research project G083118N (Belgium) and the hospitality by the Centro de Matemática da Universidade de Coimbra received in the early phases of this work.

\bibliographystyle{plain}
\bibliography{CourantAlgebroids}

\end{document}